\documentclass[11pt]{article}
\usepackage{authblk}
\usepackage[utf8]{inputenc}
\usepackage{hyperref}
\hypersetup{
     colorlinks = true,
     linkcolor = magenta,
     anchorcolor = magenta,
     citecolor = magenta,
     filecolor = magenta,
     urlcolor = magenta
     }

\usepackage{geometry}
\usepackage{graphicx}
\usepackage{amssymb}
\usepackage{amsthm, amsmath}
\usepackage[all,cmtip]{xy}
\usepackage{color}

\newcommand{\n}{\textbf{n}}
\newcommand{\Z}{ {\mathbb{Z}} } 
\newcommand{\Q}{ {\mathbb{Q}} }
\newcommand{\R}{ {\mathbb{R}} }

\newcommand{\Qp}{ {\mathbb{Q}_p} }

\newcommand{\A}{ \textbf{A} }
\newcommand{\Qv}{ {\mathbb{Q}_v} }

\newcommand{\im}{\operatorname{Im}}

\newtheorem{theorem}{Theorem}[section]

\newtheorem{lemma}[theorem]{Lemma}

\newtheorem{remark}[theorem]{Remark}
\newtheorem{corollary}[theorem]{Corollary}

\title{The size of $2$-Selmer groups for the $\frac{\pi}{3}$-congruent number problem}

\author[1, 2]{Kushal Bhowmick}
\author[1,2]{Aprameyo Pal}

\affil[1]{\raggedright Harish-Chandra Research Institute, Chhatnag Road, Jhunsi, Prayagraj 211 019, India}

\affil[2]{\raggedright Homi Bhabha National Institute, Training School Complex, Anushakti Nagar, Mumbai \newline 400 094, India \newline Email: kushalbhowmick@hri.res.in}

\date{}
\begin{document}

\maketitle

\begin{abstract}
Our main objective in this paper is to study the average rank of the $2$-Selmer group of the elliptic curve associated with the $\frac{\pi}{3}$-congruent number problem. Following Heath-Brown's strategy \cite{HeathBrown},  we could find an asymptotic formula for the size of the relaxed $2$-Selmer groups, which have several consequences towards the average of $2$-Selmer ranks and $\frac{\pi}{3}$-congruent number problem. Indeed, we could find an unconditional positive density of $2$-Selmer rank being $1$ or $3$, among the positive square-free integers $n\equiv 13\pmod{24}$ having all the prime divisors congruent to $1$ modulo $4$ and an unconditional positive density of $2$-Selmer rank being $0$ or $2$, among the positive square-free integers $n\equiv 5\pmod{24}$ having all the prime divisors congruent to $1$ modulo $4$.
\end{abstract}

\noindent \textbf{Keywords:}
Elliptic curves; $\theta$-Congruent numbers; $2$-Selmer rank.

\medskip

\noindent \textbf{MSC 2020:}
11G40; 11G05; 11L40.

\section{Introduction}

\noindent In recent years, the study of quadratic twists of elliptic curves has made significant progress. A key conjecture in this field is Goldfeld's conjecture, which posits that approximately $50\%$ of the quadratic twists of a given elliptic curve have an analytic rank of $1$,  and $50\%$ have an analytic rank of $0$. One of the early breakthroughs towards the conjecture was due to Heath-Brown \cite{HeathBrown}, in the case of congruent elliptic curves $E_n: y^2= x^3-n^2x$.   
\\

\noindent A positive integer \(n\) is a congruent number if it represents the area of a right triangle with rational side lengths. The Congruent Number Conjecture states that for any \(n \equiv 5, 6, 7 \pmod{8}\), \(n\) is congruent; whereas for \(n \equiv 1, 2, 3 \pmod{8}\), almost all \(n\) are non-congruent. Major contributions towards this include results from Tian-Yuan-Zhang \cite{TYZ17} and A. Smith \cite{Smith17} proving some positive density results towards the conjecture.  In particular, the result of Smith has the remarkable consequence that $100\%$ of square-free integers \(n \equiv 1, 2, 3 \pmod{8}\) are not congruent. Tian-Yuan-Zhang \cite{TYZ17} have used the Waldspurger formula and the generalized Gross--Zagier formula of Yuan-Zhang-Zhang, whereas A. Smith's techniques \cite{Smith17} are more similar (but a significant improvement) to   Heath-Brown \cite{HeathBrown}. An important point to note is that this result is beyond the scope of the results from Kane \cite{Kane}, as the considered elliptic curve $E_{1,\frac{\pi}{3}}$ (defined below) in this article, does not satisfy the technical assumption of Kane i.e. $(1-0)(1-(-3))=4$ is a square.
\\

\noindent Fujiwara {\cite{Fujiwara}} introduced a generalized concept of the congruent number. A positive integer n is called a $\theta$-congruent number if there exists a rational $\theta$-triangle with area $n\sqrt{r^{2}-s^{2}}$, where $\theta$ is a real number with $0<\theta<\pi$, $\cos \theta=\frac{s}{r}$ is rational and $r,s\in \Z$, $\gcd(r,s)=1$.
Here a rational $\theta$-triangle is a triangle with rational sides and an angle $\theta$. 
Notably, a congruent number can be seen as a $(\frac{\pi}{2})$-congruent number. The elliptic curves associated to $\theta$-congruent number problem is defined by \begin{center}
$E_{n,\theta} : y^{2} = x(x+n(r+s))(x-n(r-s))$
\end{center}
where $\cos \theta=\frac{s}{r}$, $r,s\in \mathbb{Z}$, $\gcd(r,s)=1$ and $n\in \mathbb{N}$. There are some important criteria for $\theta$-congruent numbers by Fujiwara.
\begin{theorem}{\cite{Fujiwara}}
Let n be any square-free natural number, $0<\theta<\pi$. Then
\item 1) n is $\theta$-congruent if and only if $E_{n,\theta}$ has a rational point of order greater than $2$.
\item 2) For $n\nmid 6$, n is $\theta$-congruent if and only if the Mordell-Weil group $E_{n,\theta}(\Q)$ has a positive rank.   
\end{theorem}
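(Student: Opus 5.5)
The plan is to pass between rational $\theta$-triangles and points of $E_{n,\theta}(\Q)$ by an explicit birational correspondence, and then to use the standard description of the torsion subgroup. Throughout, write $\alpha=\sqrt{r^2-s^2}$, so $\sin\theta=\alpha/r$.

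\textbf{From triangles to points.} Let a rational $\theta$-triangle have sides $a,b$ enclosing the angle $\theta$ and opposite side $c$. The hypotheses give
\[
c^2=a^2+b^2-\tfrac{2s}{r}ab,\qquad \tfrac12 ab\sin\theta = n\alpha .
\]
The second relation is equivalent to $ab=2nr$. I will take the candidate point
\[
x=\tfrac{n(r-s)}{4}\cdot\tfrac{(a+b+c)^{\,\cdot}}{\,\cdot\,}\ \text{(to be fixed)},
\]
and the natural choice, following Fujiwara, is $x=\frac{n\,((b+c)^2-a^2)}{4\,(\ldots)}$, or more concretely $x=\frac{(c\pm(a-b))^2 r}{4}\cdot(\text{const})$. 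The variant I will check first is $x = \frac{r}{4}\bigl(c^2-(a-b)^2\bigr)\cdot\frac{n}{ab}\cdot(\ldots)$.

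Rather than guessing the normalization, I will derive it. Substitute $ab=2nr$ into $c^2=(a-b)^2+2ab(1-s/r)=(a-b)^2+4n(r-s)$. This gives
\[
c^2-(a-b)^2 = 4n(r-s),\qquad (a+b)^2-c^2 = 4n(r+s).
\]
Now set $x=\frac{(c+a-b)^2}{4}\cdot\lambda$ and adjust $\lambda$ so that $x$, $x+n(r+s)$ and $x-n(r-s)$ are each $\lambda$ times a rational square, up to a common factor. Their product is then $\lambda^3$ times a square, and $\lambda=1$ works. This produces a rational point with $y\neq 0$. In parallel I will show that the point is not of order $2$ and not of order $3$ or $4$ in a degenerate way. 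Concretely, the point is of the form $2Q$, since each of the three factors is a square, so it is divisible by $2$. The order-greater-than-$2$ property then follows because $y\ne0$.

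\textbf{From points to triangles.} Conversely, let $P=(x,y)$ have order greater than $2$. Then $2P=(x',y')$ with $y'\neq0$, and by the $2$-descent criterion $x'$, $x'+n(r+s)$ and $x'-n(r-s)$ are all rational squares. I will invert the formulas above, setting
\[
c-(a-b)=\tfrac{4n(r-s)}{2\sqrt{x'}}\cdot(\ldots),
\]
and solve the resulting linear system for $a,b,c$, replacing the triangle by its negatives if needed to make all sides positive. Checking positivity, and checking that $y'\ne0$ prevents the degenerate solution with $a=0$ or $b=0$, is the delicate part of this direction.

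\textbf{Part 2.} The torsion of $E_{n,\theta}(\Q)$ contains $(\Z/2)^2$. By Mazur's theorem together with the fact that full $2$-torsion is rational, the torsion is one of
\[
\Z/2\times\Z/2k \quad (k=1,2,3,4).
\]
I will show that points of order $3$, $4$ or $8$ force $n$ to divide $6$. For a point of order $4$, one of the $2$-torsion points must be $2Q$, so the pairwise differences of the roots $0$, $-n(r+s)$, $n(r-s)$ must be squares up to the appropriate signs. This gives conditions like $n(r+s)$ and $2nr$ both being squares, which I expect to contradict square-freeness unless $n\mid 2$. For order $3$ I will use the flex condition on the $3$-division polynomial and a reduction mod $p$ argument for primes $p\mid n$ with $p\ge5$. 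If torsion is only $(\Z/2)^2$ whenever $n\nmid 6$, then a point of order greater than $2$ exists if and only if the rank is positive, and part 2 follows from part 1.

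The main obstacle is this torsion analysis. The route I would try first is the reduction argument. For primes $p\mid n$ the curve has additive reduction, and the prime-to-$p$ torsion injects into the component group, which has order at most $4$. This rules out odd torsion as soon as $n$ has a prime factor $p\geq5$, and applying it to two different primes handles $8$-torsion.
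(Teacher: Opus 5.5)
The paper gives no proof of this statement (it is quoted from Fujiwara), so your proposal has to stand on its own. The overall architecture is right: an explicit triangle--point correspondence, then a torsion computation. The heart of part 1, however, is wrong as written. With $\lambda=1$, your choice $x=(c+a-b)^2/4$ gives $x-n(r-s)=\tfrac12(c+a-b)(a-b)$ and $x+n(r+s)=\tfrac12\bigl(a^2+b^2+c(a-b)\bigr)$, and these are not squares. For the $(3,4,5)$ triangle with $\theta=\pi/2$, $n=6$, $a=3$, $b=4$ you get $x=4$ and $x(x+6)(x-6)=-80$, so there is no rational point at all. The identities you already derived give the correct normalization: take $x=c^2/4$. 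Then $x-n(r-s)=\bigl(\tfrac{a-b}{2}\bigr)^2$ and $x+n(r+s)=\bigl(\tfrac{a+b}{2}\bigr)^2$, so $P=\bigl(c^2/4,\;c(a^2-b^2)/8\bigr)$ lies in $2E_{n,\theta}(\mathbb{Q})$. Second, the inference ``$y\neq 0$, hence order $>2$'' fails for isosceles triangles: if $a=b$, then $P=(n(r-s),0)$ is a $2$-torsion point. This genuinely occurs. For example, $n=1$, $\theta=\pi/3$ and the equilateral triangle of side $2$ give $P=(1,0)$ on $y^2=x(x+3)(x-1)$. The correct conclusion is that $P\neq O$ lies in $2E_{n,\theta}(\mathbb{Q})$, so $P=2Q$ with $Q$ of order $>2$ (here $Q=(-1,2)$, of order $4$). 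In other words, what you should prove is that $n$ is $\theta$-congruent if and only if $2E_{n,\theta}(\mathbb{Q})\neq\{O\}$.

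The converse has a similar slip but no real obstruction. It is false that $2P$ always has $y'\neq 0$ (it is $2$-torsion when $P$ has order $4$), but nothing needs this. For $R=2P\neq O$, the numbers $x(R)$, $x(R)+n(r+s)$ and $x(R)-n(r-s)$ are squares, and $x(R)\neq 0$ because $-n(r-s)<0$. Set $c=2\sqrt{x(R)}$, $a+b=2\sqrt{x(R)+n(r+s)}$ and $a-b=2\sqrt{x(R)-n(r-s)}$, using nonnegative roots. These give $ab=2nr$ and $c^2=a^2+b^2-\frac{2s}{r}ab$, and $a,b>0$ holds automatically since $(a+b)^2-(a-b)^2=8nr>0$. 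So positivity is not delicate, and your displayed inverse, built on the wrong $x$, should be replaced by this. Part 2 is sound, and your reduction argument alone does all of it. For square-free $n\nmid 6$, choose a prime $p\geq 5$ dividing $n$. The reduction at $p$ is additive, so the prime-to-$p$ torsion of $E_{n,\theta}(\mathbb{Q}_p)$ injects into a component group of order at most $4$. That group is already filled by $E_{n,\theta}[2]\cong(\mathbb{Z}/2)^2$, and $p\nmid 4k$ for $k\leq 4$, so the torsion is exactly $(\mathbb{Z}/2)^2$. No second prime and no separate $4$-torsion argument are needed. If you do keep the descent argument for order $4$, note that it yields $n(r-s)$ and $2nr$ squares (not $n(r+s)$). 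The contradiction for an odd $p\mid n$ then comes from $\gcd(r,r-s)=1$, not from square-freeness of $n$.
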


\noindent In this article we would mostly deal with the case of $\theta = \frac{\pi}{3}$. We will remark, on how the case of $\theta = \frac{2\pi}{3}$ can follow similarly.  Inspired by the arithmetic theory of elliptic curves,  the following conjecture is raised by Yoshida \cite{Yoshida}.
\\

\noindent {\bf Conjecture 1}\label{theta-congruent-conjecture}
Let $n >5$ be a square-free integer. Then \\
(1) $n$ is a $\frac{\pi}{3}$-congruent number if  $n\equiv 6,10, 11, 13, 17, 18, 21, 22, 23 \pmod{24}$.\\
(2) $n$ is a $\frac{2\pi}{3}$-congruent number if  $n \equiv 5, 9, 10, 15, 17, 19, 21, 22, 23 \pmod{24}$.\\

\noindent A number being $\frac{\pi}{3}$-congruent or  $\frac{2\pi}{3}$-congruent is also related to another famous problem of Tiling numbers.  An integer \( n > 1 \) is called a tiling number (TN) if there exists a triangle \( \Delta \) and a positive integer \( k \) such that \( nk^2 \) copies of \( \Delta \) can be tiled into an equilateral triangle.  Fujiwara \cite{Fujiwara} has shown that $n$ is a tiling number if and only if $n$ is a $\frac{\pi}{3}$ or  $\frac{2\pi}{3}$-congruent number. There are some results due to Hibino and Kan \cite{HK} and 
He-Hu-Tian in \cite{HHT21} using Heegner points.  A recent achievement towards this is due to 
Feng-Liu-Pan-Tian \cite{FLPT} using the Waldspurger formula and the induction method as in \cite{TYZ17}.
\\

\noindent In this article, we adopt the strategy used by Heath-Brown \cite{HeathBrown}.  Consider the elliptic curves associated to $\frac{\pi}{3}$-congruent number problem, defined by:
$$E_{n,\frac{\pi}{3}} :  y^{2} = x(x+3n)(x-n)$$     

\noindent where $n\in \mathbb{N}$. One can check that $E_{n,\frac{\pi}{3}}$ has four $2$-torsion points given by:
$$E_{n,\frac{\pi}{3}}[2]= \lbrace \Theta, (0,0), (-3n,0), (n,0)\rbrace.$$
Moreover, one can also show that the $\Q$-torsion points of $E_{n,\frac{\pi}{3}}$ are the same as the $2$-torsion points. As a consequence:
$$E_{n,\frac{\pi}{3}}(\Q)_{tors}= \lbrace \Theta, (0,0), (-3n,0), (n,0)\rbrace \simeq\frac{\Z}{2\Z}\oplus \frac{\Z}{2\Z} \quad \text{for}\quad  n\nmid 6.$$
Via the Mordell-Weil theorem, one obtains:
$$E_{n,\frac{\pi}{3}}(\Q) \simeq \Z^{r(n)}\oplus E_{n,\frac{\pi}{3}}(\Q)_{tors}$$
where $r(n)$ is a non-negative integer, which is called the Mordell-Weil rank of the elliptic curve $E_{n,\frac{\pi}{3}}$. Let $Sel_{2}(E_{n,\frac{\pi}{3}}\diagup \Q)$ denotes the $2$-Selmer group over $\Q$. We know that  $\#\, Sel_{2}(E_{n,\frac{\pi}{3}}\diagup \Q) = 2^k$, where $k \geq 2$, since $E_{n,\frac{\pi}{3}}$ has four rational $2$-torsion points. We shall therefore write $\,\# \, Sel_{2}(E_{n,\frac{\pi}{3}}\diagup \Q) = 2^{2+s(n)}$. The exponent $s(n)$ is called the $2$-Selmer rank of the elliptic curve $E_{n,\frac{\pi}{3}}$. One can deduce from above $\#\: \frac{E_{n,\frac{\pi}{3}}(\Q)}{2E_{n,\frac{\pi}{3}}(\Q)}= 2^{2+r(n)}$. As we know that $\frac{E_{n,\frac{\pi}{3}}(\Q)}{2E_{n,\frac{\pi}{3}}(\Q)}\hookrightarrow Sel_{2}(E_{n,\frac{\pi}{3}}\diagup \Q)$, it implies $r(n)\leq s(n)$.\\

\noindent The aim of this paper is to investigate $s(n)$ on average. The main result proved in the article is a result similar to Heath-Brown for $E_{n,\frac{\pi}{3}}$ involving the $2$-Selmer rank.
\begin{theorem}{\label{maintheorem}}
For $\, h=5,13, \quad$ let
\begin{center}
$S(X,h)= \lbrace1\leq n\leq X :\,n\equiv h \pmod{24}, \,n\,\text{square-free},\, p\equiv 1 \pmod{4} \text{\, for every prime}\: p\mid n\rbrace$.
\end{center}
Then \begin{center} 
$\sum\limits_{n\in S(X,h)}2^{s(n)}= 9\,\#\, S(X,h)+ O(X(\log X)^{-\frac{5}{8}}(\log \log X)^{8})$.
\end{center}
\end{theorem}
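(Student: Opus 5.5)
We follow Heath-Brown's method. The first step is to describe the $2$-Selmer group of $E_{n,\frac{\pi}{3}}$ explicitly by complete $2$-descent. Because the $2$-torsion is rational, with roots $e_1=0$, $e_2=-3n$, $e_3=n$, each Selmer element is a pair $(a,b)$ of square-free integers supported on primes dividing $6n$, say $x=a u^2$ and $x+3n=b v^2$. Each pair determines a quartic intersection
\[
a u^2 - b v^2 = -3n w^2,\qquad a u^2 - ab\, z^2 = n w^2 ,
\]
and this intersection must be everywhere locally soluble. For $n\equiv 5,13 \pmod{24}$ with every $p\mid n$ satisfying $p\equiv1\pmod 4$, we next analyse the local conditions at $2$, at $3$ and at $\infty$. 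The aim is to show that these conditions restrict the signs and the $2$- and $3$-parts of $a,b$ to a fixed finite set. Since $p\equiv 1 \pmod 4$ gives $(-1/p)=1$, the local conditions at each $p\mid n$ become Legendre-symbol conditions on the coprime factorisation $n=\prod_{i} D_i$ into finitely many pieces indexed by the choices (at most $16$ factors, as with Heath-Brown's $D_1,\dots,D_{16}$). Then $2^{s(n)}$ equals a constant times the number of such factorisations. Each factorisation is detected by a product of terms $\frac12\bigl(1+(\frac{D_i}{D_j})\bigr)$ and similar factors with characters mod $24$. Expanding these products writes $2^{s(n)}$ as an average over $4^{\omega(n)}$ (times a constant) of products of the form $\prod_{i\ne j}\left(\frac{D_i}{D_j}\right)^{\Phi(i,j)}$ with $\Phi(i,j)\in\{0,1\}$, together with characters of $D_i$ modulo $24$ (or modulo $8$ and $3$).

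Next we sum over $n\in S(X,h)$, so that the quantity to evaluate is $\sum_{D_1,\dots,D_k}$ of these character products over $D_i$ with $\prod D_i\le X$, $\prod D_i\equiv h \pmod{24}$, all prime factors congruent to $1 \pmod 4$, and the $D_i$ pairwise coprime. As in Heath-Brown, we call two indices $i,j$ linked if $\Phi(i,j)+\Phi(j,i)=1$. Quadratic reciprocity is symmetric here because all primes are $\equiv1\pmod4$, so the symbol $(\frac{D_i}{D_j})$ depends only on the unordered pair. The contributions then split into three kinds. Terms where two linked variables are both large are handled by the large sieve bound for double oscillation, $\sum_{m,n}a_m b_n (\frac{m}{n})\ll MN(M^{-1/2}+N^{-1/2})$ up to logarithms. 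Terms where a large variable is twisted only by a character of small conductor are handled by Siegel--Walfisz. The remaining terms, the maximal unlinked sets, give the main term. We then count these maximal unlinked configurations: there should be a fixed number of them, each contributing $\#S(X,h)$ times a local density. The bookkeeping for $\Phi$ and the congruence restrictions from $2$ and $3$ must combine into the constant $9$. A useful consistency check is that $9=\prod(1+2^{j})$-type averages reproduce Heath-Brown's $3$ in the congruent case. Here the extra factor presumably comes from $3\mid 6n$ contributing extra generators to the descent, which gives $\mathbb{E}[2^{s}]=3^2$.

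The error term $O(X(\log X)^{-5/8}(\log\log X)^{8})$ comes, as in Heath-Brown, from restricting $D_i$ to a dyadic-box decomposition with at most $(\log\log X)$-type loss per variable. The saving $(\log X)^{-5/8}$ arises because the number of prime factors is typically $\tfrac12\log\log X$, counted in the sparse set of primes $\equiv1\pmod 4$, and the weight $4^{\omega(n)}$ over this set has total size $X(\log X)^{4\cdot\frac12-1}$. The exponent must therefore be tracked relative to $\#S(X,h)\asymp X(\log X)^{-1/2}$ and not relative to $X$. I expect the main obstacle to be the combinatorial step: classifying the maximal unlinked index sets for this particular $\Phi$, including the characters at $2$ and $3$, and verifying that their total contribution is exactly $9\,\#S(X,h)$. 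The analytic estimates are close copies of Heath-Brown's lemmas. For the combinatorics I would first try to reduce $\Phi$ to the congruent-number shape by a change of variables in $(a,b)$, and then adapt Heath-Brown's counting of unlinked sets, checking the count on small cases.
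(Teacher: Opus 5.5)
\textbf{Genuine gap: the descent normalisation and the constant are never actually derived.} Your outline has the paper's architecture: descent to a $16$-variable Jacobi-symbol sum, linked pairs killed by double oscillation, small-conductor twists handled by Siegel--Walfisz, and a main term from maximal unlinked configurations. But the two steps that carry the theorem are left as intentions.

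First, the descent normalisation is only stated as an aim. The paper needs the $2$-adic argument (using $n\equiv 5\pmod 8$) and the $3$-adic argument (using $p\equiv 1\pmod 4$ for $p\mid n$). These put each torsion coset into the form $n_1X^2+3n_4W^2=n_2Y^2$, $n_1X^2-n_4W^2=n_3Z^2$ with positive $n_i\mid n$. It then needs automatic solubility at $2$, and at $3$ via the product formula.

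Second, and more seriously, the constant is not derived. ``$9=3^2$ from extra generators at $3$'' is not a mechanism anywhere in the argument. The prime $3$ adds no generators; its only trace is the character $\bigl(\frac{3}{\cdot}\bigr)=\bigl(\frac{\cdot}{3}\bigr)$ on $\beta=n_{13}n_{14}n_{23}n_{24}$, and that character \emph{removes} configurations. Since $g(\n)$ has exactly Heath-Brown's symbol pattern, the same $24$ configurations of Lemma~\ref{lemma9} survive. With all $p\equiv 1\pmod 4$, the reciprocity signs and $\bigl(\frac{-1}{\cdot}\bigr)$ vanish. The main term then counts the configurations whose four characters coincide. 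The paper gets $9$ by counting the configurations that avoid $13,14,23,24$.

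\textbf{Doing this bookkeeping carefully does not give $9$.} There are two independent reasons.

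(a) $\theta((n,0))=(n,n,1)$ has trivial sign and trivial $2$- and $3$-parts. So the involution $(n_1,n_2,n_3,n_4)\mapsto(n_2,n_1,n_4,n_3)$, which is translation by $(n,0)$, preserves local solubility. Each contributing torsion coset therefore yields two tuples, and with the paper's normalisation $\sum_{\n}g(\n)=2^{s(n)+1}$. Your ``constant times'' is thus $\tfrac12$. For example, for $n=5$ only $(1,1,5,1)$ and $(1,1,1,5)$ survive the conditions at $p=5$, so $\sum g=2$. The Wei--Guo parity theorem, however, forces $s(5)$ to be even. So Lemma~\ref{lemma3} is off by exactly this factor.

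(b) The configuration $13,14,23,24$ (type $ij,ik,lj,lk$) is unlinked, and its joined symbols cancel by reciprocity. This gives $g(\n)=\bigl(\frac{3}{n}\bigr)4^{-\omega(n)}=\bigl(\frac{h}{3}\bigr)4^{-\omega(n)}$. No estimate can remove this term, because $\chi(a)\chi(b)=\chi(ab)=\chi(h)$ whenever $ab\equiv h\pmod{24}$. In particular Lemma~\ref{characterlemma} cannot hold as stated.

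Putting (a) and (b) together, the method yields
$\sum_{n\in S(X,h)}2^{s(n)+1}=\bigl(9+\bigl(\frac{h}{3}\bigr)\bigr)\#S(X,h)+o(\#S(X,h))$.
That is an average of $2^{s(n)}$ equal to $4$ for $h=5$ and $5$ for $h=13$. So ``verifying that the total is exactly $9\,\#S(X,h)$'' is not a step you will be able to complete.
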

\begin{remark}
\leavevmode

\begin{itemize}
\item  We also observe that, based on computations in \texttt{SageMath}, Heath--Brown's technique allows us to handle only integers  $n \equiv 5,13 \pmod{24}$ whose all prime factors satisfy $p \equiv 1 \pmod{4}$. The technical difficulties arising in the case are mentioned in more detail in the beginning of Section  \ref{counting 2-descent}. In particular, any progress beyond this range would necessitate new methods. 
\item One interesting point to observe is that the average rank of the $2$-Selmer group is inconsistent with Bhargava-Kane-Lenstra-Poonen-Rains \cite{BKLPR}.  
\end{itemize}
\end{remark}

\noindent Using the inequality $r(n)\leq s(n)$, one obtains the following. 
\begin{corollary}\label{averagerank}
For  $\, h=5,13, \quad$ we have,
\begin{center}
$\sum\limits_{n\in S(X,h)}2^{r(n)}\leq 9\,\#\, S(X,h)+ O(X(\log X)^{-\frac{5}{8}}(\log \log X)^{8})$.
\end{center}
\end{corollary}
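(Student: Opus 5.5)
This follows directly from Theorem \ref{maintheorem} together with the inequality $r(n)\leq s(n)$ recorded in the introduction. The argument is a termwise comparison, so no new analytic input is needed.

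First, fix $h\in\{5,13\}$ and an integer $n\in S(X,h)$. Since $n$ is square-free and $n\equiv h \pmod{24}$, we have $n\nmid 6$, because $h$ is neither $1$, $2$, $3$ nor $6$ modulo $24$. So the torsion subgroup is exactly $E_{n,\frac{\pi}{3}}[2]\simeq \Z/2\Z\oplus\Z/2\Z$, as stated earlier. Consequently $\#\,E_{n,\frac{\pi}{3}}(\Q)/2E_{n,\frac{\pi}{3}}(\Q)=2^{2+r(n)}$.

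Next, the Kummer map embeds $E_{n,\frac{\pi}{3}}(\Q)/2E_{n,\frac{\pi}{3}}(\Q)$ into $Sel_2(E_{n,\frac{\pi}{3}}/\Q)$, whose order is $2^{2+s(n)}$. Comparing orders gives $r(n)\le s(n)$. Since $t\mapsto 2^t$ is increasing, it follows that $2^{r(n)}\le 2^{s(n)}$ for every $n\in S(X,h)$.

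Finally, we sum this inequality over the finite set $S(X,h)$, which gives
\[
\sum_{n\in S(X,h)}2^{r(n)}\;\le\;\sum_{n\in S(X,h)}2^{s(n)}.
\]
We then substitute the asymptotic formula of Theorem \ref{maintheorem} for the right-hand side. The implied constant in the error term $O(X(\log X)^{-\frac58}(\log\log X)^{8})$ is inherited unchanged, and this yields the stated upper bound.

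There is no genuine obstacle here. The only point needing care is checking that $n\nmid 6$ for $n\in S(X,h)$, so that the torsion computation and hence the formula for $\#\,E_{n,\frac{\pi}{3}}(\Q)/2E_{n,\frac{\pi}{3}}(\Q)$ apply. Even without that check, $\#\,E(\Q)/2E(\Q)\ge 2^{2+r(n)}$ holds whenever the full $2$-torsion is rational, and this already gives $r(n)\le s(n)$.
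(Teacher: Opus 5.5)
Your proposal is correct and is exactly the paper's argument. The paper deduces the corollary from Theorem~\ref{maintheorem} via the termwise bound $2^{r(n)}\le 2^{s(n)}$, coming from the Kummer injection $E_{n,\frac{\pi}{3}}(\Q)/2E_{n,\frac{\pi}{3}}(\Q)\hookrightarrow Sel_{2}(E_{n,\frac{\pi}{3}}/\Q)$, summed over $S(X,h)$; your side remarks (that $n\nmid 6$ here, and that $\#E(\Q)/2E(\Q)=2^{2+r(n)}$ holds anyway once the full $2$-torsion is rational, since $\#T/2T=\#T[2]$ for finite abelian $T$) are accurate and simply make explicit what the paper's introduction takes for granted.
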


\noindent Using the parity results proved in \cite{WeiGuo} and the main Theorem \ref{maintheorem}, we can deduce several corollaries towards the density statements of $s(n) = 0$ or $2$ (Corollary \ref{s(n)=0or2}) and unconditional density results towards $s(n) = 1$ or $3$ (Corollary \ref{s(n)=1}). These results provide further evidence towards Conjecture \ref{theta-congruent-conjecture}.  These applications are collected in the Section \ref{applications}. 
\\

\noindent 
Another technical aspect that deserves emphasis is the role of local conditions at primes dividing $6n$. In the computation of the $2$-Selmer group, the main obstacle is to understand the image of the Kummer map inside $\Q^{\times}/(\Q^{\times})^{2}$. For the family $E_{n,\frac{\pi}{3}}$, the structure of the model gives rise to a naturally defined $2$-isogeny, and this allows us to describe the local images explicitly in terms of Hilbert symbols and quadratic residue conditions. The assumption that every prime divisor of $n$ satisfies $p \equiv 1 \pmod{4}$ plays a crucial role here, since under this restriction the relevant local groups behave in a much more controlled manner. As a result, the global Selmer group may be expressed in terms of a system of homogeneous linear conditions over $\mathbb{F}_{2}$, and its size reduces to counting the solutions to these systems. From a Galois cohomological viewpoint, the $2$-Selmer group sits inside $H^{1}(\Q, E_{n,\frac{\pi}{3}}[2])$, and since the curve has full rational $2$-torsion, this cohomology group decomposes into three copies of $H^{1}(\Q, \Z/2\Z)$; the Selmer group then appears as the intersection of three affine subspaces cut out by local restriction maps. Consequently, the distribution of $2$-Selmer ranks in this family is governed not by the random matrix heuristics of Poonen-Rains, but by a more rigid structure originating from the $2$-isogeny. The asymptotic constant $9$ in Theorem~\ref{maintheorem} reflects that, on average, the $2$-Selmer condition behaves like the intersection of three independent affine $\mathbb{F}_{2}$-hyperplanes, a phenomenon not predicted by the general heuristics of Bhargava-Kane-Lenstra-Poonen-Rains. Understanding why this rigidity emerges, and whether similar deviations occur in other $\theta$-congruent families, remains an intriguing direction for future research.
\\

\noindent We would like to close the introduction by pointing out some possible future directions. One immediate direction, one could proceed is towards the results related to $\frac{2\pi}{3}$-congruent number problem setting. In this setting, one would also proceed similarly as in this article. This would also lead to some new character sums like in Section \ref{sec:6}.  Another natural direction to follow up will be to find out the exact distribution of $s(n)$ as in Heath-Brown \cite{HeathBrown1}.  This would also start with Theorem \ref{maintheorem} as a crucial input and one has to deal with some extra contributions in the character sum. \\

\noindent {\bf Structure of the article.} The structure of the article is as follows.  In the 2nd section,  first, we find out the homogeneous equations via the $2$-descent method and obtain an alternative expression for the average rank of the $2$-Selmer group.  The expression requires us to take an average over the component variables of a given integer $n$.  The 3rd section takes the average over the linked variables.  In the 4th section,  one needs to take the average over the remaining sum.  Using some known estimates, one could get a bound over characters modulo $24$. The remaining sums are taken care of in the 5th section, which contributes to the leading terms in the statement of the main Theorem \ref{maintheorem}.  Section 6,  lists several immediate applications including unconditional $2$-Selmer ranks and evidences towards $\frac{\pi}{3}$-congruent number problem.\\

\noindent {\bf Acknowledgements.} The first author would like to thank Dr.Ratnadeep Acharya and Dr.Ravi Theja for several useful discussions during the preparation of the article. The second author acknowledges the support of the SERB Matrics grant (MTR/2022/000302) and the SERB SRG grant (SRG/2022/000647). The first author acknowledges support from the HRI Institute fellowship and the Infosys grant of Dr. Jishnu Ray. 

\section{Counting 2-descents}\label{counting 2-descent}
\noindent In this section, we begin with the elliptic curve $E_{n,\frac{\pi}{3}}$ and get an alternative description of $s(n)$ by counting $2$-descents following Heath-Brown \cite{HeathBrown}.  
$$E_{n,\frac{\pi}{3}} :  y^{2} = x(x+3n)(x-n) \qquad n\in \mathbb{N}_{>1},\, \text{square-free},\, (n,24)=1.$$
Let us recall the homomorphism
$$\theta: \frac{E_{n,\frac{\pi}{3}}(\Q)}{2E_{n,\frac{\pi}{3}}(\Q)}\rightarrow Q\times Q\times Q,\quad \text{where}\quad Q=\frac{\Q^{\times}}{\Q^{\times2}}$$
\begin{center}
$(x,y)\mapsto (x,x+3n,x-n)\,(\mathrm{mod} \,\Q^{\times2}).$
\end{center}
Note that the map is defined except at the torsion points. This is a well-known fact that $\theta$ is injective and $E_{n,\frac{\pi}{3}}(\Q)$ has no torsion points other than $2$-torsion points. We now figure out that a coset of $E_{n,\frac{\pi}{3}}(\Q)_{tors}$ in $E_{n,\frac{\pi}{3}}(\Q)$ associated to a non-torsion point, consists of 
\begin{align}\label{coset}
&(x,y),\quad (x,y)+(0,0)=\left(\frac{-3n^{2}}{x},\frac{x+3n^{2}y}{x^{2}}\right),   \quad (x,y)+(n,0)=\left(\frac{n(x+3n)}{x-n},\frac{-4n^{2}y}{(x-n)^{2}}\right), \quad \text{and}\\&(x,y)+(-3n,0)=\left(\frac{-3n(x-n)}{x+3n},\frac{-12n^{2}y}{(x+3n)^{2}}\right) \notag.
\end{align} 

\noindent Using \texttt{SageMath} calculations, we observe that for $n \equiv 1, 17 \pmod{24} $, there is no point $(x,y)$ in the above coset that satisfies $x>0$ and $|x|_{2}\neq 1$. To obtain an equation as in \cite{HeathBrown}, we require a unique point $(x,y)$ in the above coset satisfying $x>0$, $|x|_{2}\neq 1$ and either $v_{3}(x) \leq 0$ or $v_{3}(x)> 0$.
Using \texttt{SageMath} calculations, we could not find any point $(x,y)$ in the above coset satisfying $x>0$, $|x|_{2}\neq 1$ and  $v_{3}(x) \leq 0$ (respectively $x>0$, $|x|_{2}\neq 1$ and $v_{3}(x)> 0$) for all $n \equiv 7,11,23 \pmod{24}$. Finally we shall prove that there is a unique point $(x,y)$ in the above coset satisfying $x>0$, $|x|_{2}\neq 1$ and $v_{3}(x) \leq 0$, whenever $n \equiv 5,13 \pmod{24}$ and every prime divisor of $n$ is congruent to $1$ modulo $4$. By \texttt{SageMath}, we found that the above assertion does not hold if we drop the condition on the prime divisors of $n$.\\

\noindent From now on we assume that $n\in S(X,h)$ and $h=5,13$.
\begin{lemma}\label{important lemma}
There is a unique point $(x,y)$ in the coset (\ref{coset}) satisfying $x>0$ and $|x|_{2}\neq 1$.    
\end{lemma}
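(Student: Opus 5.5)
The plan has two steps. First use signs to cut the four points of the coset (\ref{coset}) down to two candidates. Then use a $2$-adic valuation analysis to show that exactly one of those two has $|x|_2\neq 1$. Only the last step should need the hypotheses $n\equiv 5\pmod 8$ (true for $h=5,13$) and $p\equiv 1\pmod 4$ for every $p\mid n$.

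\emph{Step 1 (signs).} A real point of $y^2=x(x+3n)(x-n)$ has $x\in[-3n,0]$ or $x\ge n$. For a non-torsion point we have $x\neq -3n,0,n$, so the inequalities below are strict. If $x>n$, then $-3n^2/x<0$ and $n(x+3n)/(x-n)>0$, while $-3n(x-n)/(x+3n)<0$. If $-3n<x<0$, the signs are reversed. Hence exactly two points in the coset have positive abscissa. After possibly translating by $(0,0)$, these are $P=(x,y)$ with $x>n$ and $P+(n,0)$, whose abscissa is $x'=n(x+3n)/(x-n)$. It therefore suffices to show that exactly one of $v_2(x)$ and $v_2(x')$ is nonzero.

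\emph{Step 2 (2-adic cases).} Recall that $n$ is odd.
\begin{itemize}
\item If $v_2(x)<0$, then $v_2(x+3n)=v_2(x-n)=v_2(x)$, so $v_2(x')=0$.
\item If $v_2(x)>0$, then $x+3n$ and $x-n$ are $2$-adic units, so again $v_2(x')=0$.
\end{itemize}
In both cases exactly one of the two abscissas is a $2$-adic non-unit. It remains to treat $v_2(x)=0$, where we must show $v_2(x+3n)\neq v_2(x-n)$. Both numbers are even, and their difference $4n$ has valuation exactly $2$. So if the valuations were equal, both would have to equal $1$: equal valuations $\ge 2$ force the difference to have valuation $\ge 3$. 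Everything thus reduces to excluding $v_2(x+3n)=v_2(x-n)=1$ with $x$ a $2$-adic unit.

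\emph{Step 3 (the main obstacle).} This configuration is not ruled out by local considerations at $2$ alone, since the product can still be a $2$-adic square. Consistently with the paper's SageMath remarks, global information is needed. I would write
$$x=b_1u^2,\qquad x+3n=b_2v^2,\qquad x-n=b_3w^2,$$
with $b_i$ positive square-free (positivity because $x>n$), $b_i\mid 6n$ and $b_1b_2b_3$ a square. In the bad case $b_1$ is odd and $b_2=2c_2$, $b_3=2c_3$ with $c_2,c_3$ odd divisors of $3n$, hence $b_1=c_2c_3$.

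I would then compute Legendre or Jacobi symbols. For a prime $p\mid c_2$ with $p\nmid 3$, reduce $x-n=b_3w^2$ modulo $p$; together with the analogous relations for the other $b_i$, this forces certain symbols $\left(\frac{2}{p}\right)$, $\left(\frac{3}{p}\right)$ and $\left(\frac{c_j}{p}\right)$ to be $+1$. The condition $p\equiv1\pmod 4$ is what lets quadratic reciprocity flip these symbols freely. Taking the product over all $p\mid n$, and adding the condition at $3$ coming from $x+3n\equiv x\pmod 3$, should yield a congruence on $c_2,c_3\pmod 8$. Comparing this with the $2$-adic congruences forced by $x-n\equiv 2\pmod 4$, $x+3n=x-n+4n$ and $n\equiv5\pmod 8$ should give a contradiction. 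This bookkeeping of symbols is the step I expect to be delicate, and it is where the hypotheses $h\in\{5,13\}$ and $p\equiv1\pmod 4$ must enter.

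Once the bad case is excluded, $v_2(x')=v_2(x+3n)-v_2(x-n)\neq 0$ while $v_2(x)=0$. This gives uniqueness, and existence, in every case.
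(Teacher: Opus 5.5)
Your Steps 1 and 2 are correct; the sign analysis in Step 1 is more explicit than what the paper writes. They correctly reduce the lemma to excluding a $2$-adic unit $x$ with $v_2(x+3n)=v_2(x-n)=1$. The gap is Step 3: this configuration is never actually excluded, and the premise that sends you toward a global argument is false. For $n\equiv 5\pmod 8$ the product cannot be a $2$-adic square, so the case is ruled out at $2$ alone. The bad case is exactly $x\equiv 3\pmod 4$. Put $a=(x-n)/2\in\mathbb{Z}_2^\times$, so that $(x+3n)/2=a+2n$. Since $n\equiv 1\pmod 4$, we get $a(a+2n)=a^2+2na\equiv 1+2a\pmod 8$. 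If $x\equiv 3\pmod 8$, then $a\equiv 3\pmod 4$ and $x\,a(a+2n)\equiv 3\cdot 7\equiv 5\pmod 8$. If $x\equiv 7\pmod 8$, then $a\equiv 1\pmod 4$ and $x\,a(a+2n)\equiv 7\cdot 3\equiv 5\pmod 8$. But $y^2=x(x+3n)(x-n)=4x\,a(a+2n)$ forces $x\,a(a+2n)=(y/2)^2\equiv 1\pmod 8$, a contradiction. This is the paper's proof, written there with $x=r/s$ and $s=W^2$, so that $r\equiv x\pmod 8$. It uses only $n\equiv 5\pmod 8$. The hypothesis $p\equiv 1\pmod 4$ plays no role in this lemma; it is needed for Lemma \ref{lmma}, which is what the SageMath remark about dropping that condition refers to.

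As written, your Step 3 only outlines Legendre-symbol bookkeeping that ``should yield'' a congruence and ``should give'' a contradiction. No symbol is computed and no congruence is derived, so the one nontrivial step of the lemma is unproved. Replacing Step 3 by the mod $8$ computation above completes your argument.
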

\begin{proof}
If $|x|_{2}\neq 1$, then we have $|\frac{n(x+3n)}{x-n}|_{2}=1$ and $|\frac{-3n(x-n)}{x+3n}|_{2}=1$. Therefore we get a unique point in the coset (\ref{coset}) satisfying the above condition. 

\noindent Now we assume that $|x|_{2}=1$. Note that it suffices to show $|x+3n|_{2}\neq |x-n|_{2}$. As $n$, $3n$ are odd and $x\equiv 1\pmod{2}$ we get
$$x-n=2x^{\prime}, \quad x+3n=2x^{\prime \prime} \quad \mathrm{for\: some} \quad x^{\prime},x^{\prime \prime}\in \mathbb{Z}_{2}.$$ Thus $2n=x^{\prime \prime}-x^{\prime}$. Since $n\equiv 5\pmod{8}$ we obtain
$$x^{\prime \prime}-x^{\prime}\equiv 2\pmod{8}.$$
If $x^{\prime\prime}\equiv 0,2,4,6\pmod{8}$, then we are done. So  assume that $x^{\prime\prime}\equiv 1,3,5,7\pmod{8}$. This implies that $$x\equiv 3,7\pmod{8}.$$
Below we show that this leads to a contradiction.

\noindent Now we write $\, x=\frac{r}{s},\quad y=\frac{t}{u}$ with $(r,s)=(t,u)=1$ and where $r$, $s$ are odd. Then
\begin{align*}
\frac{t^{2}}{u^{2}}  & =\frac{r}{s}(\frac{r}{s}+3n)(\frac{r}{s}-n), \\ 
\Rightarrow s^{3}t^{2} &  =r(r+3sn)(r-sn)u^{2}.
\end{align*}
Now $u^{2}\mid s^{3}$, since $t$ and $u$ are coprime. Similarly we observe that $(s^{3}, r(r+3sn)(r-sn))=1$ as $r$ and $s$ are coprime, so we have $s^{3}\mid u^{2}$. Therefore $s^{3}=u^{2}$, and it implies that $s=W^{2}$, $u=W^{3}$ for some $W \in \Z$. We now have $$r(r+3sn)(r-sn)=t^{2}.$$ 
As $r+3sn$, $r-sn$ are both even, we get $2\mid t$. So we have
\begin{equation}\label{equn}
r\left(\frac{r+3sn}{2}\right)\left(\frac{r-sn}{2}\right)=\left(\frac{t}{2}\right)^{2}.    
\end{equation}
Case 1)  $x\equiv 3\pmod{8}$. Since $s=W^{2}$, we have $s\equiv 1\pmod{8}$. Therefore $r\equiv 3\pmod{8}$ and $\frac{r-sn}{2}\equiv 3\pmod{4}$. We write $\frac{r-sn}{2}=4k+3$. Therefore $\frac{r+3sn}{2}=4k+3+2sn$. Also note that
$$\frac{r+3sn}{2}-\frac{r-sn}{2}=2sn\equiv 2\pmod{8}.$$
Putting all these together we get
$$r\left(\frac{r+3sn}{2}\right)\left(\frac{r-sn}{2}\right)=r(4k+3+2ns)(4k+3)\equiv 5\pmod{8}.$$
Using this in (\ref{equn}) we obtain a contradiction.\\

\noindent Case 2)  $x\equiv 7\pmod{8}$. Since $s=W^{2}$, we have $s\equiv 1\pmod{8}$. Therefore $r\equiv 7\pmod{8}$ and $\frac{r-sn}{2}\equiv 1\pmod{4}$. We write  $\frac{r-sn}{2}=4k+1$. Therefore $\frac{r+3sn}{2}=4k+1+2sn$. We obtain $\frac{r+3sn}{2}$, $\frac{r-sn}{2}$ are both odd. As in the previous case, we have $2sn\equiv 2\pmod{8}$. Therefore
$$r\left(\frac{r+3sn}{2}\right)\left(\frac{r-sn}{2}\right)=r(4k+1+2ns)(4k+1)\equiv 5\pmod{8}$$
which is again a contradiction.\\
\noindent Hence proved.
\end{proof}

\begin{lemma}\label{lmma}
Suppose $(x,y)\in E_{n,\frac{\pi}{3}}(\Q)\setminus  E_{n,\frac{\pi}{3}}(\Q)_{tors}$ be such that $x>0$ and $|x|_{2}\neq 1$. Then $v_{3}(x)\leq 0$.
\end{lemma}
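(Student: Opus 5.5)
We argue by contradiction: assume $v_{3}(x)>0$ and look for a local obstruction. As in the proof of Lemma \ref{important lemma}, we write $x=\frac{r}{s}$ with $(r,s)=1$. The same coprimality argument gives $s=W^{2}$ and
\[
r(r+3sn)(r-sn)=t^{2}\qquad\text{with } t\in\Z.
\]
Since $x>0$ we have $r>0$ and $r+3sn>0$. Positivity of the product then forces $r-sn>0$.

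\textbf{Step 1 (factorization).} We decompose the three factors as
\[
r=d_{1}a^{2},\qquad r+3sn=d_{2}b^{2},\qquad r-sn=d_{3}c^{2},
\]
with $d_{1},d_{2},d_{3}$ positive squarefree and $d_{1}d_{2}d_{3}$ a square. The pairwise gcds divide $3sn$, $sn$ and $4sn$, and $(r,s)=1$. Hence each $d_{i}$ divides $6n$, except that $s$ may contribute powers of $2$ when $s$ is even.

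\textbf{Step 2 (the hypothesis $|x|_{2}\neq 1$).} This splits into two cases: $2\mid r$ (so $s$ is odd), or $2\mid W$. In each case we read off which of the $d_{i}$ are even and what the odd parts of the factors are modulo $8$. Here we use $n\equiv 5\pmod 8$ and $s\equiv 1\pmod 8$ (or $s\equiv 0\pmod 4$).

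\textbf{Step 3 (the prime $3$).} Under $3\mid r$ we have $3\nmid s$, $v_{3}(r-sn)=0$, and $r-sn\equiv -sn\equiv -n\pmod 3$ because $s$ is a square. Writing $k=v_{3}(r)\ge 1$: if $k\ge 2$ then $v_{3}(r+3sn)=1$, so $k$ must be odd; if $k=1$ then $v_{3}(r+3sn)$ is odd. In either case $3\mid d_{1}$ and $3\mid d_{2}$, while $3\nmid d_{3}$.

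\textbf{Step 4 (contradiction via Legendre symbols).} We compare the residuosity of the cofactors:
\[
\Bigl(\tfrac{d_{3}c^{2}}{3}\Bigr)=\Bigl(\tfrac{-n}{3}\Bigr),\qquad \Bigl(\tfrac{d_{3}}{p}\Bigr)\ \text{and}\ \Bigl(\tfrac{d_{1}/3}{p}\Bigr)\ \text{for } p\mid n.
\]
Since every prime $p\mid n$ satisfies $p\equiv 1\pmod 4$, quadratic reciprocity gives $\bigl(\tfrac{3}{p}\bigr)=\bigl(\tfrac{p}{3}\bigr)$ and $\bigl(\tfrac{2}{p}\bigr)=\bigl(\tfrac{p}{8}\bigr)$ with no sign twists. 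Multiplying these relations over $p\mid n$ and combining them with the conditions modulo $3$ and modulo $8$ from Steps 2 and 3, the product formula for the Hilbert symbols $(d_{i},\cdot)_{v}$ should force a congruence on $n$ modulo $3$ and modulo $8$. Our aim is to show this congruence is incompatible with $n\equiv 5,13\pmod{24}$.

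\textbf{Expected obstacle.} The bookkeeping in Step 4 is the hardest part. The two residue classes behave differently at $3$: $n\equiv 2\pmod 3$ for $h=5$, but $n\equiv 1\pmod 3$ for $h=13$. So the contradiction must come from different places in the two cases.
\begin{itemize}
\item For $h=5$, we first try to get it directly from $-n\equiv 1\pmod 3$ against the sign of $\bigl(\tfrac{d_{3}}{3}\bigr)$.
\item For $h=13$, we first try to get it from the $2$-adic parity information of Step 2 combined with reciprocity.
\end{itemize}
If this direct route fails, we fall back on the same style as Lemma \ref{important lemma}: use the coset (\ref{coset}) to replace $(x,y)$ by a translate with controlled $2$- and $3$-adic valuations, and derive a contradiction from a congruence modulo $8$ or $9$.
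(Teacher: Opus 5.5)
\textbf{Gap: Step 4 derives no contradiction.} Your Steps 1 and 3 are correct: $r-sn>0$, $s=W^2$, and $3\mid d_1$, $3\mid d_2$, $3\nmid d_3$. But the proof stops exactly where the contradiction has to appear. Step 4 only asserts that the Legendre/Hilbert symbol bookkeeping ``should force'' a congruence on $n$; nothing is derived, so as written there is no proof.

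The concrete suggestions do not fill this in:
\begin{itemize}
\item The product formula alone cannot do it. For a genuine rational point every local symbol is $+1$, so you must exhibit a specific place where a condition fails.
\item Your $h=5$ idea yields only $d_3\equiv -n\equiv 1 \pmod 3$. That is perfectly consistent, since $d_3$ is just some divisor of $n$.
\item Expecting the obstruction to differ between $h=5$ and $h=13$ is a misdiagnosis. The contradiction is $2$-adic and uniform. It uses only $(n,6)=1$, positivity, $|x|_2\neq 1$, and $p\equiv 1\pmod 4$ for $p\mid n$, never the class of $n$ modulo $24$.
\item The fallback of passing to a torsion translate cannot help. By Lemma \ref{important lemma}, the given point is the only one in its coset with $x>0$ and $|x|_2\neq 1$, so any translate loses the hypotheses.
\end{itemize}

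\textbf{The missing idea.} It is the linear relation between the two odd factors, which the paper uses. Since $3\mid r$,
\[
\frac{r+3sn}{3}+(r-sn)=\frac{4r}{3}.
\]
In the paper's notation this is $n_2Y^2+n_3Z^2=4r''$. The hypothesis $|x|_2\neq 1$ means $r$ and $s$ have opposite parities. Hence $r+3sn=d_2b^2$ and $r-sn=d_3c^2$ are odd, and so $b$ and $c$ are odd. Moreover $d_2/3$ and $d_3$ are positive divisors of $n$: they are odd, your gcd computation puts their primes among those dividing $3n$, and Step 3 accounts for the $3$. Every prime of $n$ is $\equiv 1\pmod 4$, so $d_2/3\equiv d_3\equiv 1\pmod 4$. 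The left side is therefore $\equiv 2\pmod 4$, while the right side is $\equiv 0\pmod 4$.

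This is exactly the paper's contradiction $n_2+n_3\equiv 0\pmod 4$. Your Step 2 case split and the residue analysis at $3$ and at $p\mid n$ are then unnecessary.
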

\begin{proof}
Let us assume that $v_{3}(x)>0$. we write $\, x=\frac{r}{s},\quad y=\frac{t}{u}$ with $(r,s)=(t,u)=1$ and where $r,s,u >0$ and $r,s$ have opposite parities and $3\mid r$. Then
\begin{align*}
\frac{t^{2}}{u^{2}}  & =\frac{r}{s}(\frac{r}{s}+3n)(\frac{r}{s}-n), \\ 
\Rightarrow s^{3}t^{2} &  =r(r+3sn)(r-sn)u^{2}.
\end{align*}
Now $u^{2}\mid s^{3}$, since $t$ and $u$ are coprime. Similarly we observe that $(s^{3}, r(r+3sn)(r-sn))=1$ as $r$ and $s$ are coprime, so we have $s^{3}\mid u^{2}$. Therefore $s^{3}=u^{2}$, and it implies that $s=W^{2}$, $u=W^{3}$ for some $W \in \Z$. We now have
$$r(r+3sn)(r-sn)=t^{2}.$$ 
Let's denote $(r,n)=n_{0}$ and $r=n_{0}r^{\prime}$, 
$$n_{0}r^{\prime}(n_{0}r^{\prime}+3sn)(n_{0}r^{\prime}-sn)=t^{2},$$
$$\Rightarrow n_{0}^{3}r^{\prime}(r^{\prime}+3s\frac{n}{n_{0}})(r^{\prime}-s\frac{n}{n_{0}})=t^{2},$$
$\Rightarrow n_{0}^{3}\mid t^{2}$. Then $n_{0}^{2}\mid t$ Since $n_{0}$ is square-free. Thus we obtain, 
$$r^{\prime}(r^{\prime}+3s\frac{n}{n_{0}})(r^{\prime}-s\frac{n}{n_{0}})=n_{0}\left( \frac{t}{n_{0}^{2}} \right)^{2}.$$
We write $r^{\prime}=3r^{\prime\prime}$. Thus we get
$$9r^{\prime\prime}(r^{\prime\prime}+s\frac{n}{n_{0}})(3r^{\prime\prime}-s\frac{n}{n_{0}})=n_{0}\left( \frac{t}{n_{0}^{2}} \right)^{2}.$$
We observe that $3\mid t$. Therefore we obtain
$$r^{\prime\prime}(r^{\prime\prime}+s\frac{n}{n_{0}})(3r^{\prime\prime}-s\frac{n}{n_{0}})=n_{0}\left( \frac{t}{3n_{0}^{2}} \right)^{2}.$$
As $n$ and $r+s$, are odd, it implies that  $r^{\prime\prime}+s\frac{n}{n_{0}}$ is odd. Since we know that $(r^{\prime\prime},s\frac{n}{n_{0}})=1$, then we conclude $r^{\prime\prime}, (r^{\prime\prime}+s\frac{n}{n_{0}}), (3r^{\prime\prime}-s\frac{n}{n_{0}})$ are pairwise coprime. Therefore we may write
$$n_{0}=n_{1}n_{2}n_{3},\quad \frac{t}{3n_{0}^{2}}=XYZ,$$
and
$$r^{\prime\prime}=n_{1}X^{2},\quad r^{\prime\prime}+s\frac{n}{n_{0}}=n_{2}Y^{2},\quad 3r^{\prime\prime}-s\frac{n}{n_{0}}=n_{3}Z^{2}.$$
Therefore from above we obtain
$$n_{2}+n_{3}\equiv 0\pmod{4}.$$
Which is a contradiction to all prime divisors of $n$ are congruent to $1$ modulo $4$.
Hence we are done.
\end{proof}

\noindent From Lemma \ref{important lemma} and Lemma \ref{lmma} we obtain that there is a unique point $(x,y)$ in the coset (\ref{coset}) with $x>0$, $|x|_{2}\neq 1$ and $v_{3}(x)\leq 0$. Therefore $\im(\theta)$ restricted to all the points $(x,y)$ with $x>0$, $|x|_{2}\neq 1$ and $v_{3}(x)\leq 0$ will have size $2^{r(n)}$.\\

\noindent To analyze $\im(\theta)$, we write $\, x=\frac{r}{s},\quad y=\frac{t}{u}$ with $(r,s)=(t,u)=1$ and where $r,s,u >0$ and $r,s$ have opposite parities and $3\nmid r$. Then
\begin{align*}
\frac{t^{2}}{u^{2}}  & =\frac{r}{s}(\frac{r}{s}+3n)(\frac{r}{s}-n), \\ 
\Rightarrow s^{3}t^{2} &  =r(r+3sn)(r-sn)u^{2}.
\end{align*}
Now $u^{2}\mid s^{3}$, since $t$ and $u$ are coprime. Similarly we observe that $(s^{3}, r(r+3sn)(r-sn))=1$ as $r$ and $s$ are coprime, so we have $s^{3}\mid u^{2}$. Therefore $s^{3}=u^{2}$, and it implies that $s=W^{2}$, $u=W^{3}$ for some $W \in \Z$. We now have
$$r(r+3sn)(r-sn)=t^{2}.$$ 
Let's denote $(r,n)=n_{0}$ and $r=n_{0}r^{\prime}$, 
\begin{center}
$n_{0}r^{\prime}(n_{0}r^{\prime}+3sn)(n_{0}r^{\prime}-sn)=t^{2}$\\
\vspace*{0.3cm}
$\Rightarrow n_{0}^{3}r^{\prime}(r^{\prime}+3s\frac{n}{n_{0}})(r^{\prime}-s\frac{n}{n_{0}})=t^{2}$,
\end{center}
$\Rightarrow n_{0}^{3}\mid t^{2}$. Then $n_{0}^{2}\mid t$ Since $n_{0}$ is square-free. Thus we obtain, 
\begin{equation}\label{eq:1}
r^{\prime}(r^{\prime}+3s\frac{n}{n_{0}})(r^{\prime}-s\frac{n}{n_{0}})=n_{0}\left( \frac{t}{n_{0}^{2}} \right)^{2}.
\end{equation}
As $n$ and $r+s$, are odd, it implies that  $r^{\prime}+s\frac{n}{n_{0}}$ is odd. Since we know that $(r^{\prime},s\frac{n}{n_{0}})=1$ and $3\nmid r^{\prime}$, then we conclude $r^{\prime}, (r^{\prime}+3s\frac{n}{n_{0}}), (r^{\prime}-s\frac{n}{n_{0}})$ are pairwise coprime. Therefore we may write
$$n_{0}=n_{1}n_{2}n_{3},\quad tn_{0}^{-2}=XYZ,$$
and
$$r^{\prime}=n_{1}X^{2},\quad r^{\prime}+3s\frac{n}{n_{0}}=n_{2}Y^{2},\quad r^{\prime}-s\frac{n}{n_{0}}=n_{3}Z^{2}.$$
\noindent By taking $\frac{n}{n_{0}}=n_{4}$, we get the system of equations,
\begin{equation}\label{eq:2}
n_{1}X^{2}+3n_{4}W^{2}=n_{2}Y^{2},\quad n_{1}X^{2}-n_{4}W^{2}=n_{3}Z^{2}.
\end{equation}
\noindent $r>0$ and $n_{0}>0$ implies that $n_{1},n_{4}>0$ and we note that if the first of the equations (\ref{eq:2}) has non-trivial real solutions then $n_{2}>0$. Then as $n=n_{1}n_{2}n_{3}n_{4}>0$, we note that $n_{3}>0$. Therefore we have obtained the following lemma.
\begin{lemma}\label{lemma1}
There are $2^{r(n)}$ systems (\ref{eq:2}) with non-trivial integer solutions. Moreover from the definition of $s(n)$, it is evident that there are $2^{s(n)}$ systems (\ref{eq:2}) which are solvable in $\mathbb{R}$ and in $\Qp$ for all prime $p$. 
\end{lemma}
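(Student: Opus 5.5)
We have to justify two assertions. First, the systems (\ref{eq:2}) with nontrivial integer solutions correspond bijectively to $\im(\theta)$, which has $2^{r(n)}$ elements. Second, the everywhere locally solvable systems correspond to the $2$-Selmer group modulo torsion, which has $2^{s(n)}$ elements. A system is indexed by a factorization $n=n_1n_2n_3n_4$ into positive factors, equivalently by a triple $(n_1n_0^{\ast}, \dots)$ of classes in $Q$.

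\textbf{First assertion.} Using Lemma \ref{important lemma} and Lemma \ref{lmma}, each class in $E_{n,\frac{\pi}{3}}(\Q)/2E_{n,\frac{\pi}{3}}(\Q)$ coming from a non-torsion coset has a unique representative $(x,y)$ with $x>0$, $|x|_2\neq 1$ and $v_3(x)\le 0$. The identity class is handled by the convention that it gives the trivial system $n_1=n_2=n_3=1$, $n_4=n$; the classes of the $2$-torsion points are the ones we quotient away.

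The derivation preceding the lemma attaches to this point a quadruple $(n_1,n_2,n_3,n_4)$ with a nontrivial solution $(X,Y,Z,W)$ of (\ref{eq:2}). The classes are recovered as
\[
\theta(x,y)=(n_0n_1,\;n_0n_2,\;n_0n_3) \bmod \Q^{\times 2},
\]
since $x=n_0n_1X^2/W^2$, and similarly for $x+3n$ and $x-n$.

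Conversely, a nontrivial solution of a system gives $x=n_1n_0X^2/W^2$, a rational point. One must check:
\begin{itemize}
\item $W\neq 0$, by a positivity argument using $n_i>0$;
\item the resulting point lands in the normalized representative set, or at least in the correct coset, so that distinct systems give distinct classes.
\end{itemize}
Injectivity of $\theta$ then shows that the map from systems to classes is injective, because $(n_0n_1,n_0n_2)$ determines $n_1,n_2,n_3,n_4$ among square-free positive divisors. Hence the number of solvable systems is $2^{r(n)}$. I would present this step by following Heath-Brown's Lemma 1.

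\textbf{Second assertion.} Standard complete $2$-descent identifies $Sel_2$ with the set of triples $(d_1,d_2,d_3)\in Q^3$, $d_1d_2d_3=1$, whose torsor
\[
d_1u^2=x,\qquad d_2v^2=x+3n,\qquad d_3w^2=x-n
\]
has points everywhere locally. The same change of variables shows this torsor is locally equivalent to (\ref{eq:2}) for the corresponding $n_i$, so local solvability transfers in both directions.

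\textbf{Main obstacle.} The delicate point is the local normalization: every locally soluble class must have exactly one representative of our shape. This requires redoing Lemma \ref{important lemma} and Lemma \ref{lmma} locally. Over $\R$ and $\Q_2$ one uses the sign and $2$-adic normalizations; over $\Q_3$ one uses $v_3\le 0$, translating by the $2$-torsion points inside the local groups. This is the key step to verify.

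Once it holds, each Selmer coset of the torsion image (of size $4$) gives exactly one system solvable everywhere. Therefore the count is $2^{2+s(n)}/4=2^{s(n)}$.
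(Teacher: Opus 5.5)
Your outline follows the paper's route: a unique normalized point in each torsion coset, plus identifying local solvability of (\ref{eq:2}) with membership in $Sel_{2}$. But the counting step you defer is false, not merely unverified, and the paper's derivation has the same gap: it shows normalized points produce systems, never the converse. The problem is the torsion point $(n,0)$. Here $\theta(n,0)=(n,4n,4n^{2})\equiv(n,n,1)$, and since $4$ is a square this triple is itself of the form $(n_2n_3,n_1n_3,n_1n_2)$. It belongs to the system $n_1=n_2=n_4=1$, $n_3=n$, which has the integer solution $(X,Y,Z,W)=(1,2,0,1)$. In Heath-Brown's congruent-number case the corresponding triple carries a factor $2$ and lies outside the family, which is why his count works.

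So you cannot ``quotient away'' the torsion classes. Let $T$ be the $4^{\omega(n)}$ triples attached to systems and $\tau=(n,n,1)$. Then $\theta(E[2])\cap T=\{1,\tau\}$, and multiplication by $\tau$ sends the system $(n_1,n_2,n_3,n_4)$ to $(n_2,n_1,n_4,n_3)$; the two are solvable, globally or locally, simultaneously. Your check that ``the resulting point lands in the normalized representative set'' fails exactly here. If $P$ is the normalized point of the first system, the second system comes from $P+(n,0)$, which has $|x|_2=1$ by the first line of the proof of Lemma \ref{important lemma}. Hence every torsion coset that meets $T$ meets it in exactly two systems, not one.

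Your local normalization does go through. The real place forces $d_2>0$. For $n\equiv 5\pmod 8$ the image at $2$ forces $d_1,d_2$ to be $2$-adic units with $d_1\equiv 1\pmod 4$. Since every $p\mid n$ is $1 \bmod 4$, this gives $3\nmid d_1$ once $d_1>0$, and the image at $3$ then forces $3\nmid d_2$. So every Selmer class has a torsion translate in $T$. The correct counts are therefore $|\im\theta\cap T|=2^{r(n)+1}$ and $|Sel_{2}\cap T|=2^{2+s(n)}/2=2^{s(n)+1}$.

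Concretely, take $n=5$. The systems $(1,1,1,5)$ and $(1,1,5,1)$ are solvable, while $(5,1,1,1)$ and $(1,5,1,1)$ fail at $p=5$ (the local factor is $1-1+1-1=0$). A direct descent gives $Sel_{2}=\theta(E[2])$, so $s(5)=0$: there are two locally solvable systems, not $2^{s(5)}=1$. For $n=13$ all four systems are everywhere locally solvable, so the statement as written would force $s(13)=2$. That contradicts Theorem \ref{Theorem 1}, whereas $2^{s(n)+1}$ gives $s(13)=1$.

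The missing idea is the contribution of $\tau$, and your final division $2^{2+s(n)}/4$ should be by $2$. A smaller point: a non-trivial solution has $W\neq 0$ except for the trivial system, which has the solution $(1,1,1,0)$. This follows from the square classes of $n_1,n_2,n_3$, not from positivity.
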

\noindent The conditions of $n_{j}>0$ for $j=1,2,3,4$ ensures that (\ref{eq:2}) have real soultions. Moreover we can show that (\ref{eq:2}) has non-trivial solutions in $\Qp$ whenever $p\nmid 6n$. Consequently, we have the following remaining conditions: \\

\noindent For $p\mid n_{1}$, (\ref{eq:2}) has non-trivial solutions in $\Qp$ if and only if $ \left(\frac{3n_{2}n_{4}}{p}\right) = \left(\frac{-n_{3}n_{4}}{p}\right)= 1$\\
For $p\mid n_{4}$, (\ref{eq:2}) has non-trivial solutions in $\Qp$ if and only if $ \left(\frac{n_{1}n_{2}}{p}\right) = \left(\frac{n_{1}n_{3}}{p}\right)= 1$\\
For $p\mid n_{2}$, (\ref{eq:2}) has non-trivial solutions in $\Qp$ if and only if $ \left(\frac{-3n_{1}n_{4}}{p}\right) = \left(\frac{-n_{3}n_{4}}{p}\right)= 1$\\
For $p\mid n_{3}$, (\ref{eq:2}) has non-trivial solutions in $\Qp$ if and only if $ \left(\frac{n_{1}n_{4}}{p}\right) = \left(\frac{n_{2}n_{4}}{p}\right)= 1$\\

\noindent Via a careful case-by-case study, we obtain the following conclusions. 
\begin{lemma}\label{Q_2localcondn}
The system  (\ref{eq:2}) is solvable in $\mathbb{Q}_{2}$ if and only if one of the following set of conditions holds:
\begin{align*}
i) \quad &n_{2}\equiv 3n_{4}\pmod{4}, n_{3}\equiv -n_{4}\pmod{4}, n_{2}+3n_{3}\equiv 0\pmod{8}\\
ii) \quad &n_{1}\equiv n_{2}\equiv n_{3}\pmod{4}, n_{2}\equiv n_{3}\pmod{8}\\
iii) \quad &n_{1}\equiv 3n_{3}\pmod{4}, n_{4}\equiv -n_{3}\pmod{4}, n_{1}+3n_{4}\equiv 0\pmod{8}\\
iv) \quad &n_{2}\equiv n_{1}\equiv n_{4}\pmod{4}, n_{1}\equiv n_{4}\pmod{8}
\end{align*}
\end{lemma}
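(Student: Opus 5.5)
We need to decide when the system (\ref{eq:2}) has a non-trivial solution in $\mathbb{Q}_2$, where $n_1,\dots,n_4$ are positive odd square-free integers with $n_1n_2n_3n_4=n\equiv 5\pmod 8$. The plan is to take a primitive solution $(X,W,Y,Z)\in\mathbb{Z}_2^4$ and split into cases on the $2$-adic valuations of $X$ and $W$, reducing everything to congruences modulo $8$. Every odd square in $\mathbb{Z}_2$ is $\equiv 1\pmod 8$, and we use Hensel's lemma to lift congruence solutions back to $\mathbb{Z}_2$.

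First we show exactly one of $X,W$ is odd in a primitive solution. If both are even, then $n_2Y^2$ and $n_3Z^2$ are divisible by $4$, so $Y,Z$ are even, contradicting primitivity. If both are odd, then $n_1X^2-n_4W^2\equiv n_1-n_4\pmod 8$ and $n_1X^2+3n_4W^2\equiv n_1+3n_4\pmod 8$. Both are even, so $Y,Z$ are even and both expressions are $\equiv 0 \pmod 4$. That forces $n_1\equiv n_4\pmod 4$ and $n_1\equiv -3n_4\equiv n_4\pmod 4$, which is consistent, so this case is not excluded outright. Instead it must be matched against the valuation data of the right-hand sides. These are exactly the "linked" situations, namely conditions (iii) and (iv) together with the symmetric pair. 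So the honest split is into three cases: $X$ odd with $W$ even, $W$ odd with $X$ even, and both odd.

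In the case $X$ odd, $W$ even, write $W=2^kW'$ with $k\ge1$. If $k\ge 2$, then modulo $8$ we get $n_1\equiv n_2Y^2\equiv n_3Z^2$, and $Y,Z$ are odd. This gives $n_1\equiv n_2\equiv n_3\pmod 8$. Conversely, Hensel's lemma applies since the leading term is a unit, which yields (ii). If $k=1$, we get $n_2\equiv n_1+12n_4W'^2 \equiv n_1+4\pmod 8$ and $n_3\equiv n_1-4\pmod 8$. Hence $n_2\equiv n_3\pmod 8$ and $n_1\equiv n_2\pmod 4$, which again lands inside (ii). So the $X$-odd branch gives exactly (ii).

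Symmetrically, for $W$ odd and $X=2^kX'$, the case $k\ge2$ gives $3n_4\equiv n_2$ and $-n_4\equiv n_3\pmod 8$. The case $k=1$ gives $n_2\equiv 3n_4+4n_1$ and $n_3\equiv -n_4+4n_1 \pmod 8$. Together these give (i): $n_2\equiv 3n_4$, $n_3\equiv -n_4\pmod 4$, and $n_2+3n_3\equiv 0\pmod 8$. For the both-odd case, write $Y=2^aY'$ and $Z=2^bZ'$. Eliminating $X^2$ gives $4n_4W^2=n_2Y^2-n_3Z^2$, and eliminating $W^2$ gives $4n_1X^2=n_2Y^2+3n_3Z^2$. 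Analysing $\min(a,b)=1$ and the possible values of $a,b$ in these combinations modulo $32$ produces (iii) and (iv), which are the images of (i) and (ii) under the symmetry $(n_1,n_4)\leftrightarrow$ (roles of $X,W$) after rescaling by the relation $n\equiv5\pmod8$.

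The main obstacle is this both-odd case. There the valuations of $Y,Z$ can be large, and one must track congruences modulo $16$ or $32$, using $n_1n_2n_3n_4\equiv 5\pmod 8$ to cut down the possibilities. I would first try to reduce to the earlier branches by a change of variables, replacing $(X,W)$ by a combination such as $(X\pm W)/2$ coming from the $2$-isogeny. Failing that, I would do a direct finite enumeration of residues modulo $32$. For the converse in each branch, exhibit explicit residues and lift them by Hensel's lemma applied to an odd variable with unit derivative.
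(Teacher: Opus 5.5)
Your two branches with exactly one of $X,W$ odd are correct. $X$ odd, $W$ even gives exactly (ii): $v_2(W)\ge 2$ yields $n_1\equiv n_2\equiv n_3$, and $v_2(W)=1$ yields $n_2\equiv n_3\equiv n_1+4 \pmod 8$. $W$ odd, $X$ even gives exactly (i). The converses hold because an odd $2$-adic unit $\equiv 1\pmod 8$ is a square. The paper gives no proof beyond ``a careful case-by-case study''.

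The gap is the branch with $X,W$ both odd, which is precisely where (iii) and (iv) come from. There you only assert that an analysis of $\min(a,b)=1$ modulo $32$ ``produces'' (iii) and (iv). You never establish $\min(a,b)=1$, never carry out the analysis, and end with two tentative strategies. The mechanism you name is also wrong. The passage from (i), (ii) to (iii), (iv) has nothing to do with $n\equiv 5\pmod 8$: the lemma holds for arbitrary odd $n_i$ and that congruence is never used. Nor is a substitution like $(X\pm W)/2$ what is needed.

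The missing idea is already in your eliminated equations. When $X,W$ are odd you showed $Y,Z$ are even. Write $Y=2\tilde Y$, $Z=2\tilde Z$ and divide by $4$:
\[
n_1X^2=n_2\tilde Y^2+3n_3\tilde Z^2,\qquad n_4W^2=n_2\tilde Y^2-n_3\tilde Z^2 .
\]
This is the system (\ref{eq:2}) itself, with $(n_1,n_2,n_3,n_4)$ replaced by $(n_2,n_1,n_4,n_3)$ and $(X,W,Y,Z)$ replaced by $(\tilde Y,\tilde Z,X,W)$. The passage $(A,B)\mapsto(A-B,A+3B)$ between the pairs of equations and the rescaling of $Y,Z$ are both invertible over $\mathbb{Q}_2$. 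So the original and dual systems are solvable simultaneously.

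The quadruple $(\tilde Y,\tilde Z,X,W)$ is primitive because $X$ is odd. Also $\tilde Y,\tilde Z$ are neither both even nor both odd, since either case makes $n_2\tilde Y^2+3n_3\tilde Z^2$ even and hence $X$ even. So your first two branches apply verbatim to the dual system. They give the images of (ii) and (i) under $n_1\leftrightarrow n_2$, $n_3\leftrightarrow n_4$, which are exactly (iv) and (iii). Conversely, a dual solution with $\tilde Y$ or $\tilde Z$ odd gives the solution $(X,W,2\tilde Y,2\tilde Z)$ of the original system.

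With this step added, and no modulo $32$ enumeration, the argument is complete. Without it, your proposal only shows that solutions with exactly one of $X,W$ odd exist if and only if (i) or (ii) holds.
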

\noindent We can easily observe that in our case either ii) or iv) is satisfied as $n\in S(X,h)$ and $h=5,13$. Therefore by Lemma \ref{Q_2localcondn} we get System (\ref{eq:2}) is solvable in  $\mathbb{Q}_{2}$.
\begin{lemma}\label{Q_3localcondn}
The system  (\ref{eq:2}) is solvable in $\mathbb{Q}_{3}$ if and only if one of the following set of conditions holds:
\begin{align*}
i) \quad &n_{1}\equiv n_{2}\equiv n_{3}\pmod{3}\\
ii) \quad &n_{1}\equiv  n_{2}\equiv n_{4}\pmod{3}\\
iii) \quad &n_{1}\equiv  n_{2}\pmod{3}, n_{1}-n_{4}\equiv n_{3}\pmod{3}
\end{align*}
\end{lemma}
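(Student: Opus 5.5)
The plan is a direct local computation at $p=3$. Since $n\in S(X,h)$ we have $(n,24)=1$, so every $n_i$ is a $3$-adic unit. A useful fact throughout is that a $3$-adic unit is a square in $\mathbb{Q}_3$ if and only if it is $\equiv 1 \pmod 3$; this is Hensel's lemma, since $3$ is odd. So a unit ratio $a/b$ is a square exactly when $a\equiv b\pmod 3$. I will prove necessity by reducing a primitive solution modulo $3$, and sufficiency by writing down explicit solutions.

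\emph{Necessity.} Suppose (\ref{eq:2}) has a non-trivial solution in $\mathbb{Q}_3$. Scale it to a primitive solution $(X,W,Y,Z)\in\mathbb{Z}_3^4$, not all divisible by $3$.
\begin{itemize}
\item First I rule out $3\mid X$. Reducing the first equation modulo $3$ gives $n_1X^2\equiv n_2Y^2 \pmod 3$. If $3\mid X$, this forces $3\mid Y$. Then the first equation reads $9(\cdot)+3n_4W^2=9(\cdot)$, so $3\mid W$. The second equation then gives $3\mid Z$, which contradicts primitivity.
\item Hence $X$ and $Y$ are units, and $n_1\equiv n_2\pmod 3$.
\item Now I reduce the second equation, $n_1X^2-n_4W^2\equiv n_3Z^2 \pmod 3$, using $X^2\equiv 1$ and $W^2,Z^2\in\{0,1\}$ modulo $3$.
\item If $3\mid W$, then $Z$ is a unit because $n_1\not\equiv 0$, and we get $n_1\equiv n_3$. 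This is condition i).
\item If $W$ is a unit and $3\mid Z$, we get $n_1\equiv n_4$, which is condition ii).
\item If $W$ and $Z$ are both units, we get $n_1-n_4\equiv n_3$, which is condition iii).
\end{itemize}

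\emph{Sufficiency.} In each case I construct an explicit $\mathbb{Q}_3$-point.
\begin{itemize}
\item Under i), take $W=0$ and $X=1$. Since $n_1/n_2\equiv 1$ and $n_1/n_3\equiv 1 \pmod 3$, both are squares, so $Y^2=n_1/n_2$ and $Z^2=n_1/n_3$ are solvable.
\item Under ii), take $Z=0$, $X=1$ and $W^2=n_1/n_4$, which is a square. Then $n_2Y^2=n_1+3n_4W^2=2n_1$. Since $2\equiv -1 \pmod 3$, we have $2n_1/n_2\equiv -1 \pmod 3$, which is \emph{not} a square, so $Z=0$ fails. Instead I take $X=W=1$. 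Then $n_2Y^2=n_1+3n_4\equiv n_1\equiv n_2 \pmod 3$, which is solvable. For the second equation, $n_3Z^2=n_1-n_4$ with $n_1-n_4\equiv 0\pmod 3$; I will take $Z\in 3\mathbb{Z}_3$ and choose $X$ and $W$ as suitable units so that $(n_1X^2-n_4W^2)/n_3$ is $9$ times a unit $\equiv 1\pmod 3$, which is then a square. This requires adjusting $X^2$ and $W^2$ within $1+3\mathbb{Z}_3$. Any value of $X^2$ in $1+3\mathbb{Z}_3$ still keeps the first equation solvable, so I keep $X$ and $W$ free and only need to control $v_3(n_1X^2-n_4W^2)$ and the leading digit.
\item Under iii), take $X=W=1$. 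Then $n_2Y^2=n_1+3n_4\equiv n_2$ and $n_3Z^2=n_1-n_4\equiv n_3 \pmod 3$, and both are solvable.
\end{itemize}

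The main obstacle is exactly this sufficiency in case ii). There $Z\equiv 0$, so the second equation has a singular reduction modulo $3$ and Hensel's lemma cannot be applied naively. The plan is to write $X^2=1+3a$ and $W^2=(n_1/n_4)(1+3b)$ with $a,b\in\mathbb{Z}_3$ free, which is possible since every element of $1+3\mathbb{Z}_3$ is a square. Then $n_1X^2-n_4W^2=3n_1(a-b)$. Choosing $a-b=3c$ with $c$ a unit gives $9n_1c$, and we need $n_1c/n_3\equiv 1 \pmod 3$, which we can arrange by choosing $c\equiv n_3/n_1\pmod 3$. Meanwhile $n_2Y^2=n_1(1+3a)+3n_1(1+3b)=n_1(1+3(a+1+3b))\equiv n_1 \pmod 3$, which is still solvable. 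This completes case ii), and with it the equivalence.
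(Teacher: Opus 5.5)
Your proof is correct. The paper gives no proof of this lemma; it is only asserted after ``a careful case-by-case study''. Your necessity half is essentially the same reduction modulo $3$ that the paper carries out in the proof of the following lemma. Your explicit $\mathbb{Q}_3$-points supply the sufficiency the paper never writes down.

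One slip in case ii) should be corrected. With $X=1$, $W^2=n_1/n_4$, $Z=0$, the first equation gives
$$n_2Y^2=n_1+3n_4\cdot(n_1/n_4)=4n_1,$$
not $2n_1$. Since $4n_1/n_2\equiv n_1/n_2\equiv 1\pmod 3$, this is a square. So $(X,W,Y,Z)=(1,\sqrt{n_1/n_4},2\sqrt{n_1/n_2},0)$ is already a solution, and the ``main obstacle'' you describe does not arise. Your replacement construction is still valid, so nothing breaks; it takes $X^2=1+3a$ and $W^2=(n_1/n_4)(1+3b)$ with $a-b=3c$ and $c\equiv n_3/n_1\pmod 3$.

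The conditions also simplify. Since every $n_i$ is a $3$-adic unit, $n_i\equiv\pm1\pmod 3$, and then condition iii) is exactly $n_3\equiv n_4\equiv -n_1$. So the disjunction of i)--iii) collapses to the single condition $n_1\equiv n_2\pmod 3$. That is precisely what your necessity argument extracts in its first step.
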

\begin{lemma}
If the system (\ref{eq:2}) is solvable in $\R$ and in $\Qp$ for all prime $p\neq 3$, then it is solvable in $\mathbb{Q}_{3}$ .
\end{lemma}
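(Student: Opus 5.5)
The plan is to use Hilbert reciprocity, i.e.\ the product formula for Hilbert symbols. System (\ref{eq:2}) defines a genus-one curve as an intersection of two quadrics. Its local solvability at $v$ is equivalent to the class $(n_1n_4,\, n_2n_4)$ (equivalently the triple $(n_1n_4, n_2n_4\cdot(\text{unit}), n_3n_4)$ up to squares) lying in the image of the local Kummer map $\delta_v: E(\Q_v)/2E(\Q_v)\to (\Q_v^\times/\Q_v^{\times2})^2$.

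\textbf{Step 1.} Use the standard fact that the Tate pairing (cup product composed with the Weil pairing) makes $\im(\delta_v)$ a maximal isotropic subspace of $H^1(\Q_v,E[2])$. For full $2$-torsion, this pairing is an explicit combination of Hilbert symbols $(a_i,b_j)_v$. Take the global class $c=(n_1n_4,n_2n_4)\in H^1(\Q,E[2])$ attached to the quadruple. By hypothesis its localisations lie in $\im(\delta_v)$ for every $v\neq 3$. By the product formula, $\sum_v\langle c,\xi\rangle_v=0$ for every global $\xi$. Hence $\langle c_3,\xi_3\rangle_3=0$ for every global $\xi$ whose localisations lie in $\im(\delta_v)$ at all $v\ne 3$.

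\textbf{Step 2.} Choose global test classes that are everywhere-local images except at $3$ and whose localisations at $3$ span a complement of $\im(\delta_3)$ under the pairing. Since $\im(\delta_3)$ has dimension $2$ in a $4$-dimensional space (for $p$ odd with $E$ having good reduction at $3$, which holds as $3\nmid n$), one needs two such classes. The natural candidates are built from $-1$ and $3$, for instance classes with coordinates in $\{1,-1,3,-3\}$. These are unramified or controlled at all $p\mid n$ because every such $p\equiv 1\pmod 4$ and $n\equiv 5,13\pmod{24}$. At those primes they lie in $\im(\delta_p)$: there $(-1,\cdot)_p=1$, and $3$ is handled through $\left(\frac{3}{p}\right)=\left(\frac{p}{3}\right)$, which must be tracked. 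At $p=2$ and $\infty$ one checks directly using Lemma \ref{Q_2localcondn}, cases ii) and iv), and positivity of the $n_j$. Once these test classes are available, isotropy forces $c_3\in\im(\delta_3)$ by maximal isotropy, which is exactly solvability in $\Q_3$.

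\textbf{Alternative route.} Instead of the pairing formalism, one can argue elementarily. Since $3\nmid n$, Lemma \ref{Q_3localcondn} reduces the claim to congruences of $n_1,\dots,n_4$ mod $3$. Quadratic reciprocity turns $\left(\frac{n_i}{3}\right)$ into $\prod_{p\mid n_i}\left(\frac{p}{3}\right)=\prod_{p\mid n_i}\left(\frac{3}{p}\right)$, using $p\equiv1\pmod4$. Multiplying the local conditions at $p\mid n_j$ over all $p$ gives $\prod_{p\mid n_1}\left(\frac{3n_2n_4}{p}\right)=1$, and so on. The factors $\left(\frac{n_2n_4}{p}\right)$ over $p\mid n_1$ combine with those over $p\mid n_2,n_4$ and cancel pairwise by reciprocity, since all primes are $1\bmod 4$. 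This leaves relations such as $\left(\frac{n_1}{3}\right)\left(\frac{n_2}{3}\right)\cdot(\text{sign from }-3)=1$, i.e.\ $n_1\equiv n_2\pmod 3$. A second product, from the conditions involving $-3$ at $p\mid n_2$ and $-1$ at $p\mid n_3$, gives the remaining congruence. Combined with $n_1n_2n_3n_4=n\equiv 2\pmod 3$, this should land in one of cases i)–iii) of Lemma \ref{Q_3localcondn}.

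The main obstacle is the bookkeeping in this last step. One must verify that the two product relations, together with $n\equiv 2\pmod 3$, always place $(n_1,n_2,n_3,n_4)\bmod 3$ in one of the three listed cases. I would do this by enumerating the residue patterns, since there are only $2^4$ of them.
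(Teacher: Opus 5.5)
\textbf{The pairing route (Steps 1--2).} This route does not go through. Step 1 is fine; the failure is in Step 2.

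A minor point first: $E_{n,\frac{\pi}{3}}$ has multiplicative, not good, reduction at $3$, because the roots $0$ and $-3n$ coincide mod $3$. The count $\dim\im(\delta_3)=2$ survives anyway, since $3$ is odd and $E[2]\subset E(\Q_3)$.

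Your candidate test classes involving $3$ are not local images at any $p\mid n$ with $p\equiv 2\pmod 3$, and such a $p$ always exists when $h=5$. At such $p$, $\im(\delta_p)$ is spanned by the torsion classes $(-3,3n,-n)$ and $(n,n,1)$. Every nontrivial element of it has a coordinate of odd $p$-adic valuation, and $\left(\frac{3}{p}\right)=-1$.

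The decisive problem is structural. Maximal isotropy gives $c_3\in\im(\delta_3)$ only if the $\xi_3$ span $\im(\delta_3)$ itself; if they span a linear complement $V$, you only get $c_3\in V^{\perp}$. So the $\xi$ would have to be genuine Selmer elements. Since Steps 1--2 never use the shape of $c$, they would then prove that every class lying in $\im(\delta_v)$ for all $v\neq 3$ lies in $\im(\delta_3)$.

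That is false. For $n=13$, take the class $(-3,1,-3)$, with coordinates for $x$, $x+39$, $x-13$:
\begin{itemize}
\item it is trivial at $13$;
\item it has signs $(-,+,-)$ at $\infty$;
\item it equals $\delta_2$ of the $2$-adic point with $x=-35$;
\item it is unramified elsewhere.
\end{itemize}
Yet it is not in $\im(\delta_3)=\{1,(-1,-1,1),(-3,3,-1),(3,-3,-1)\}$. Consistently with this, for $h=13$ the torsion classes localise at $3$ onto a single line.

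What rescues the lemma is that the class of (\ref{eq:2}) is $(n_2n_3,n_1n_3,n_1n_2)$, which is unramified at $3$. Using that, the single torsion test class $(-3,3n,-n)$ suffices. The relation it produces at $3$ is $\left(\frac{n_1n_2}{3}\right)=1$, which is exactly Hilbert reciprocity for $n_1X^2+3n_4W^2=n_2Y^2$.

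\textbf{The elementary alternative.} This route is correct and is the paper's proof made explicit. The paper applies the product formula to that conic, notes that $X,Y$ are units in a primitive $3$-adic solution, gets $n_1\equiv n_2\pmod 3$, and invokes Lemma \ref{Q_3localcondn}. Your product $\left(\frac{3n_2n_4}{n_1}\right)\left(\frac{-3n_1n_4}{n_2}\right)\left(\frac{n_1n_2}{n_4}\right)=1$ collapses by reciprocity to $\left(\frac{n_1n_2}{3}\right)=1$. This is precisely $\prod_v(n_1n_2,3n_2n_4)_v=1$; the real and $2$-adic factors are trivial because all $n_i$ are positive and $\equiv1\pmod 4$.

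Your last step needs two corrections:
\begin{itemize}
\item $n\equiv 2\pmod 3$ holds only for $h=5$; for $h=13$ one has $n\equiv 1\pmod 3$.
\item The ``second product'' does not exist as described. The conditions at $p\mid n_3$ are $\left(\frac{n_1n_4}{p}\right)=\left(\frac{n_2n_4}{p}\right)=1$, and $-1$ is a square at every $p\mid n$ anyway.
\end{itemize}
Neither is needed. Since $3\nmid n_i$, the congruence $n_1\equiv n_2$ alone settles it: case i) holds if $n_3\equiv n_1$; case ii) holds if $n_4\equiv n_1$; otherwise $n_3\equiv n_4\equiv -n_1$, which is case iii). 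So the bookkeeping you flagged is a one-line check. Drop Steps 1--2 and keep this route.
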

\begin{proof}
By the product formula for the Hasse norm residue symbol, we observe that the equation
$$n_{1}X^{2}+3n_{4}W^{2}=n_{2}Y^{2}$$
has solutions in $\mathbb{\Q}_{3}$. We may assume that such a solution involves $3$-adic integers, $Y$ is a $3$-adic unit. Therefore from the above equation we can conclude that $X$ is also a $3$-adic unit. Hence we have 
$$X^{2}\equiv Y^{2}\equiv 1\pmod{3}.$$
Therefore from the above equation we have
\begin{equation}\label{important equation}
n_{1}\equiv n_{2}\pmod{3}.  
\end{equation}
By the same argument, we can conclude that the equation
$$n_{1}X^{2}-n_{4}W^{2}=n_{3}Z^{2}$$
has solutions in $\mathbb{\Q}_{3}$. We may assume that such a solution involves $3$-adic integers, at least one of them is a unit. Therefore from the above equation we can conclude that at least two of them are units.\\

\noindent Case 1) $X\in \mathbb{Z}_{3}^\times$, $W\in \mathbb{Z}_{3}^\times$, $3\mid Z$. Therefore we have
$$n_{1}\equiv n_{4}\pmod{3}.$$
\noindent Case 2) $X\in \mathbb{Z}_{3}^\times$, $Z\in \mathbb{Z}_{3}^\times$, $3\mid W$. Therefore we have
$$n_{1}\equiv n_{3}\pmod{3}.$$
\noindent Case 3) $W\in \mathbb{Z}_{3}^\times$, $Z\in \mathbb{Z}_{3}^\times$, $3\mid X$. Therefore we have
$$n_{3}+n_{4}\equiv 0\pmod{3}.$$
\noindent Case 4) $X\in \mathbb{Z}_{3}^\times$, $W\in \mathbb{Z}_{3}^\times$, $Z\in \mathbb{Z}_{3}^\times$. Therefore we have
$$n_{1}-n_{4}\equiv n_{3}\pmod{3}.$$
We can observe that (\ref{important equation}) and Case 3) implies iii) in Lemma \ref{Q_3localcondn}. Therefore from Lemma  \ref{Q_3localcondn}, we can conlcude that the system (\ref{eq:2}) is solvable in $\mathbb{\Q}_{3}$.
\end{proof}

\noindent Now we are in a position to write down our formula for $2^{s(n)}$.
We define
\begin{center}
$\qquad \Pi_{1}=\prod\limits_{p\mid n_{1}}\biggl\{1+\biggl(\frac{3n_{2}n_{4}}{p}\biggr)+\biggl(\frac{-n_{3}n_{4}}{p}\biggr)+\biggl(\frac{-3n_{2}n_{3}}{p}\biggr)\biggr\}$    
\end{center}
\begin{center}
$\qquad \Pi_{2}=\prod\limits_{p\mid n_{2}}\biggl\{1+\biggl(\frac{-3n_{1}n_{4}}{p}\biggr)+\biggl(\frac{-n_{3}n_{4}}{p}\biggr)+\biggl(\frac{3n_{1}n_{3}}{p}\biggr)\biggr\}$    
\end{center}
\begin{center}
$\Pi_{3}=\prod\limits_{p\mid n_{3}}\biggl\{1+\biggl(\frac{n_{1}n_{4}}{p}\biggr)+\biggl(\frac{n_{2}n_{4}}{p}\biggr)+\biggl(\frac{n_{1}n_{2}}{p}\biggr)\biggr\}$    
\end{center}
\begin{center}
$\Pi_{4}=\prod\limits_{p\mid n_{4}}\biggl\{1+\biggl(\frac{n_{1}n_{2}}{p}\biggr)+\biggl(\frac{n_{1}n_{3}}{p}\biggr)+\biggl(\frac{n_{2}n_{3}}{p}\biggr)\biggr\}$    
\end{center}
We see that the expression
$$4^{-\omega(n)}\Pi_{1}\Pi_{2}\Pi_{3}\Pi_{4},$$
where $\omega(n)=\#\:\left\{p\: \text{prime}: p\mid n \right\}$, will be $1$ if the system (\ref{eq:2}) is solvable in $\Qv, \forall v\in M_{\Q}$, and $0$ otherwise. We can now expand $\Pi_{1}$, $\Pi_{2}$, $\Pi_{3}$, $\Pi_{4}$ as follows:
$$\qquad \Pi_{1}=\sum \biggl(\frac{3n_{2}n_{4}}{n_{13}}\biggr)\biggl(\frac{-n_{3}n_{4}}{n_{12}}\biggr)\biggl(\frac{-3n_{2}n_{3}}{n_{14}}\biggr),$$
here we take the sum over all factorizations, 
$n_{1}=n_{10}n_{12}n_{13}n_{14}$, $n_{10}$, $n_{12}$, $n_{13}$, $n_{14}$ are pairwise coprime.
$$\qquad \Pi_{2}=\sum \biggl(\frac{-3n_{1}n_{4}}{n_{23}}\biggr)\biggl(\frac{-n_{3}n_{4}}{n_{21}}\biggr)\biggl(\frac{3n_{1}n_{3}}{n_{24}}\biggr),$$
here we take the sum over all factorizations, $n_{2}=n_{20}n_{21}n_{23}n_{24}$, $n_{20}$, $n_{21}$, $n_{23}$, $n_{24}$ are pairwise coprime.
$$\Pi_{3}=\sum \biggl(\frac{n_{1}n_{4}}{n_{32}}\biggr)\biggl(\frac{n_{2}n_{4}}{n_{31}}\biggr)\biggl(\frac{n_{1}n_{2}}{n_{34}}\biggr),$$
here we take the sum over all factorizations, $n_{3}=n_{30}n_{31}n_{32}n_{34}$, $n_{30}$, $n_{31}$, $n_{32}$, $n_{34}$ are pairwise coprime.
$$\Pi_{4}=\sum \biggl(\frac{n_{1}n_{2}}{n_{43}}\biggr)\biggl(\frac{n_{1}n_{3}}{n_{42}}\biggr)\biggl(\frac{n_{2}n_{3}}{n_{41}}\bigg),$$
where we take the sum over all factorizations, $n_{4}=n_{40}n_{41}n_{42}n_{43}$, $n_{40}$, $n_{41}$, $n_{42}$, $n_{43}$ are pairwise coprime.\\

\noindent Here $\: \n$ represents the 16-tuple of elements $n_{ij}$ with $1\leq i\leq 4$, $0\leq j\leq 4$ and $i\neq j$. According to Lemma \ref{lemma1}, sum of the expression $4^{-\omega(n)}\Pi_{1}\Pi_{2}\Pi_{3}\Pi_{4}$ over all quadruples $n_{1}$, $n_{2}$, $n_{3}$, $n_{4}$ pairwise coprime with $\:n=n_{1}n_{2}n_{3}n_{4}\:$, is $2^{s(n)}$. Therefore we get the following.
\begin{lemma}\label{lemma3}
We have
$$2^{s(n)}=\sum\limits_{\n}g(\n),$$
here the sum is over all factorizations,
\begin{center}
$n=\prod\limits_{i,j}n_{ij}$, $\quad n_{ij}$ are pairwise coprime.
\end{center}
where
$$g(\n)=\biggl(\frac{-1}{\alpha}\biggr)\biggl(\frac{3}{\beta}\biggr)\prod\limits_{i}4^{-\omega(n_{i0})}\prod\limits_{j\neq 0}4^{-\omega(n_{ij})}\prod\limits_{k\neq i,j}\prod\limits_{l}\biggl(\frac{n_{kl}}{n_{ij}}\biggr),$$
with
$$\alpha=n_{12}n_{14}n_{23}n_{21},\qquad \beta=n_{13}n_{14}n_{23}n_{24}.$$   
\end{lemma}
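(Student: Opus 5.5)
The plan is to start from the identity recorded just before the statement, namely
$$2^{s(n)}=\sum_{n=n_1n_2n_3n_4}4^{-\omega(n)}\,\Pi_1\Pi_2\Pi_3\Pi_4 .$$
Here the sum is over ordered factorizations of $n$ into pairwise coprime positive factors. This identity follows from Lemma \ref{lemma1} together with the local analysis: solvability at $\R$ is automatic since all $n_i>0$, solvability at $2$ follows from Lemma \ref{Q_2localcondn} because ii) or iv) always holds for $n\in S(X,h)$, solvability at $3$ is automatic by the preceding lemma, and the factor $4^{-\omega(n)}\prod_i\Pi_i$ is exactly the indicator of solvability at the primes dividing $n$. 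I would then substitute the four expansions of $\Pi_1,\dots,\Pi_4$ as sums over factorizations $n_i=n_{i0}\prod_{j\neq 0,i}n_{ij}$. The first step is to observe that the pair (choice of $n_1,\dots,n_4$, choice of these subfactorizations) corresponds bijectively to a factorization $n=\prod_{i,j}n_{ij}$ into $16$ pairwise coprime factors. This holds because $n$ is square-free, so each prime of $n$ lands in exactly one $n_{ij}$. Hence the double sum becomes a single sum over $\n$.

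The second step is the weight. Since $\omega$ is additive over coprime factors, $4^{-\omega(n)}=\prod_i 4^{-\omega(n_{i0})}\prod_{j\neq 0}4^{-\omega(n_{ij})}$.

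The third step is the character factor, handled by pulling the constants $-1$ and $3$ out of each Legendre symbol using multiplicativity in the numerator. From $\Pi_1$ I get
$$\Bigl(\tfrac{3}{n_{13}}\Bigr)\Bigl(\tfrac{-1}{n_{12}}\Bigr)\Bigl(\tfrac{-3}{n_{14}}\Bigr)=\Bigl(\tfrac{-1}{n_{12}n_{14}}\Bigr)\Bigl(\tfrac{3}{n_{13}n_{14}}\Bigr),$$
and from $\Pi_2$ I get
$$\Bigl(\tfrac{-3}{n_{23}}\Bigr)\Bigl(\tfrac{-1}{n_{21}}\Bigr)\Bigl(\tfrac{3}{n_{24}}\Bigr)=\Bigl(\tfrac{-1}{n_{23}n_{21}}\Bigr)\Bigl(\tfrac{3}{n_{23}n_{24}}\Bigr).$$
The products $\Pi_3$ and $\Pi_4$ carry no constants. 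Collecting these gives exactly $\bigl(\frac{-1}{\alpha}\bigr)\bigl(\frac{3}{\beta}\bigr)$ with $\alpha,\beta$ as stated.

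For the remaining symbols, note the pattern visible in all four expansions: the denominator $n_{ij}$ (with $j\neq 0$) is paired with the numerator $n_kn_l$, where $\{k,l\}=\{1,2,3,4\}\setminus\{i,j\}$. For example, $n_{13}$ is paired with $n_2n_4$, $n_{12}$ with $n_3n_4$, and $n_{43}$ with $n_1n_2$. Writing $n_k=\prod_{l}n_{kl}$ and using multiplicativity in the numerator turns each such symbol into $\prod_{k\neq i,j}\prod_l\bigl(\frac{n_{kl}}{n_{ij}}\bigr)$.

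There is no genuine obstacle here; the proof is bookkeeping. The point needing the most care is the index check that every one of the $12$ denominators $n_{ij}$ gets precisely the complementary pair $\{k,l\}$ in its numerator, with the signs and factors of $3$ all absorbed into $\alpha$ and $\beta$. I would verify this case by case against the four displayed expansions. I would also remark that, since every prime divisor of $n$ is $\equiv 1\pmod 4$, the factor $\bigl(\frac{-1}{\alpha}\bigr)$ is in fact $1$, though it does no harm to keep it in the formula.
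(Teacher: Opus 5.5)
Your bookkeeping is correct and matches the paper's argument. The $16$-fold factorizations correspond bijectively to a quadruple together with its sub-factorizations, $4^{-\omega(n)}$ splits over the $n_{ij}$, the constants $-1$ and $3$ collect into $\alpha$ and $\beta$, and each $n_{ij}$ with $j\neq 0$ is paired with the complementary $n_kn_l$.

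The gap is the identity you start from, $2^{s(n)}=\sum_{n_1n_2n_3n_4=n}4^{-\omega(n)}\Pi_1\Pi_2\Pi_3\Pi_4$, which you take (as the paper does) from Lemma~\ref{lemma1}. It is off by a factor of $2$, so the statement as written is false. The system (\ref{eq:2}) for $(n_1,n_2,n_3,n_4)$ is the $2$-covering with Kummer class $(x,x+3n,x-n)\equiv(n_2n_3,n_1n_3,n_1n_2)$. The $2$-torsion point $(n,0)$ has class $(n,n,1)$, which is itself positive and prime to $6$. It is the class of $(1,1,n,1)$, whose system $X^2+3W^2=Y^2$, $X^2-W^2=nZ^2$ has the non-trivial solution $(1,1,2,0)$; meanwhile $(1,1,1,n)$ carries the identity. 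Multiplying by $(n,n,1)$ sends the class of $(n_1,n_2,n_3,n_4)$ to that of $(n_2,n_1,n_4,n_3)$. Hence everywhere locally solvable quadruples come in pairs, and your sum is always even. The normalisation $|x|_2\neq 1$ of Lemma~\ref{important lemma} separates $P$ from $P+(n,0)$, but it is invisible on Kummer classes. So, unlike Heath-Brown's congruent number case, the quadruples are not unique representatives of the torsion cosets.

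A concrete check is $n=5$. The prime $5$ sits in one of the $16$ slots. Since $(\frac{3}{5})=-1$, we get $g(\n)=-\frac14$ for the slots $13,14,23,24$ and $g(\n)=+\frac14$ otherwise, so $\sum_{\n}g(\n)=2$. A direct descent instead gives $2^{s(5)}=1$. Write classes as $(d_1,d_2,d_1d_2)$. The real place forces $d_2>0$. The $2$-adic image is $\{1,5\}\times(\mathbb{Z}_2^{\times}/\mathbb{Z}_2^{\times2})$. At $5$ only the four torsion classes survive. So $Sel_2(E_{5,\frac{\pi}{3}}/\Q)$ is just the image of the $2$-torsion.

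More generally, for a prime $p\equiv 5\pmod{24}$ (resp.\ $p\equiv 13\pmod{24}$) the sum is $2$ (resp.\ $4$). That would make $s(p)$ odd (resp.\ even), contradicting Theorem~\ref{Theorem 1}.

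What your computation actually proves is $\sum_{\n}g(\n)=\#(Sel_2\cap G)$, where $G$ is the group of triples of positive classes supported on the primes dividing $n$. Since $G$ meets the torsion image exactly in $\{1,(n,n,1)\}$, this equals $2^{s(n)+1}$ once you show that every coset of the torsion image in $Sel_2$ meets $G$. That is a local analogue of Lemmas~\ref{important lemma}--\ref{lmma}, and it can be checked from the local images at $2$, $3$ and $\infty$. That step is what is missing, and it changes the left-hand side to $2^{s(n)+1}$.
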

\section{averaging over \textbf{n}; linked variables}
\noindent From Lemma \ref{lemma3}, we are now in a position to estimate $2^{s(n)}$. Instead we take the sum of $2^{s(n)}$ over $S(X,h)$,
$$\sum\limits_{n\in S(X,h)}2^{s(n)}.$$ 
Following Heath-Brown's strategy, we take the sum over the 16 variables $n_{ij}$, subject to the following conditions: \\
each  $n_{ij}$ is square-free, they are pairwise coprime, each $n_{ij}$ is product of primes congruent to $1$ modulo $4$ and their product $n$ satisfies
\begin{center}
$n\leq X$, $\quad n\equiv h\pmod{24}$.
\end{center}
Let $N\in \mathbb{N}$ be such that $2^{N}\leq X<2^{N+1}$. Each variable $n_{ij}$ can be divided into intervals $(A_{ij},2A_{ij}]$, where $A_{ij}\in \left\{1,2,2^{2},\ldots,2^{N}\right\}$. We can observe that $N=O(\log
X)$. So we have $O(\log^{16} X)$ non-empty subsums, which is written as $S(\A)$, where $\A$ is the 16-tuple of numbers $A_{ij}$. At this point, we can assume that
\begin{equation}\label{eq:3}
1\ll \prod A_{ij}\ll X.
\end{equation}
We shall define the variables $n_{ij}$ and $n_{kl}$ as being `linked' if $i\neq k$ and precisely one of the conditions $l\neq 0,i$ or $j\neq 0,k$ holds. This implies that exactly one of the Jacobi symbols 
\begin{center}
$\left(\frac{n_{kl}}{n_{ij}}\right)$, $\left(\frac{n_{ij}}{n_{kl}}\right)$
\end{center}
appears in the expression for $g(\n)$. Without loss of generality, we may assume that the variables $n_{ij}$ and $n_{kl}$ are linked and the first between the two of the above Jacobi symbols appears in the expression for $g(\n)$. Then $g(\n)$ can be written in the form
$$g(\n)=\left(\frac{n_{kl}}{n_{ij}}\right)a(n_{ij})b(n_{kl}),$$
where the function $a(n_{ij})$ may depend on all the other variables $n_{uv}$ as well as $n_{ij}$, but is independent of $n_{kl}$, and the function $b(n_{kl})$ may depend on all the variables $n_{uv}$ as well as $n_{kl}$, but is independent of $n_{ij}$. Moreover
$$|a(n_{ij})|,\:|b(n_{kl})|\leq 1.$$
Now one can get the following inequality for $S(\A)$,
$$|S(\A)|\leq \sum\limits_{n_{uv}}|\sum\limits_{n_{ij},n_{kl}}\left(\frac{n_{kl}}{n_{ij}}\right)a(n_{ij})b(n_{kl})|.$$
We may re-express the condition of $(n_{ij}, n_{uv})=1$ and $(n_{kl}, n_{uv})=1$ for each $n_{uv}$, by 
 by taking the functions $a$ and $b$ to vanish at appropriate values. Moreover if $(n_{ij},n_{kl}) \neq 1$, then $\left(\frac{n_{kl}}{n_{ij}}\right)$ is zero. 
 With the above considerations, the inner sum is subject to the conditions that $n_{ij}$ and $n_{kl}$ are square-free and satisfy
\begin{center}
$n_{ij}n_{kl}\equiv h^{\prime} \pmod{24}$, $\quad n_{ij}n_{kl}\leq X^{\prime}$, $\quad n_{ij},n_{kl}\equiv 1\pmod{4}$,
\end{center}
\noindent where $h^{\prime}$ and $X^{\prime}$  depend on the other variables $n_{uv}$ and $(h^{\prime},24)=1$. We now use the following lemma.
\begin{lemma}\label{lemma4}
Let $a_{m}$, $b_{n}$ be complex numbers of modulus at most $1$. Let $h$ be an integer such that $(h,24)=1$ and let $M,N,X\gg 1$. Then 
$$\sum\limits_{m,n}\left(\frac{n}{m}\right)a_{m}b_{n}\ll MN\left\{\min(M,N)\right\}^{-\frac{1}{32}},$$
uniformly in $X$, where the sum is for square-free $m$, $n$ satisfying $M<m\leq 2M$, $N<n\leq 2N$, $m\equiv 1\pmod{4}$, $n\equiv 1\pmod{4}$, $mn\leq X$ and $mn\equiv h\pmod{24}$.
\end{lemma}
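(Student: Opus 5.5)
This is the analogue of Heath-Brown's Lemma 4 (in \cite{HeathBrown}), and the proof will follow the same route. By symmetry we may assume $M\le N$. The symbol $\left(\frac{n}{m}\right)$ is a Jacobi symbol with $m$ odd and positive. Since $m\equiv n\equiv 1\pmod 4$, quadratic reciprocity gives $\left(\frac{n}{m}\right)=\left(\frac{m}{n}\right)$, so the roles of $m$ and $n$ can be exchanged without introducing extra factors. This justifies the reduction to $M\le N$.

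\textbf{Step 1: remove the coupling constraints.} The two constraints that couple $m$ and $n$ are $mn\equiv h\pmod{24}$ and $mn\le X$; the conditions modulo $4$ are separable.
\begin{itemize}
\item For the congruence, split $m$ and $n$ into residue classes modulo $24$ (equivalently, detect the condition with the Dirichlet characters $\chi$ modulo $24$). This replaces the condition by finitely many twists $a_m\chi(m)$ and $b_n\bar\chi(n)$, which still have modulus at most $1$ and absorb into the coefficients.
\item For $mn\le X$, use the standard Perron-type device, as Heath-Brown does: write the indicator as
\[
\frac{1}{2\pi i}\int_{c-iT}^{c+iT}\left(\frac{X}{mn}\right)^{s}\frac{ds}{s}
\]
plus an error term. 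Alternatively, split $n$ into short intervals on which $X/m$ varies by little. Either way the condition is separated at the cost of a factor $\ll\log(MN)$ or $\ll N^{\epsilon}$, while the twists $m^{-it}$ and $n^{-it}$ are again absorbed into $a_m$ and $b_n$. If the integral is used, the $\tfrac{1}{2}$ line issues are avoided by choosing $c=1/\log X$.
\end{itemize}
After these reductions it suffices to prove
\[
\sum_{m\sim M}\sum_{n\sim N} a_m b_n\left(\frac{n}{m}\right)\ll MN\,M^{-1/16}
\]
(or a similar saving) for arbitrary bounded coefficients supported on square-free $m,n\equiv1\pmod 4$. The generous exponent $1/32$ in the statement leaves room to absorb the losses from this step.

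\textbf{Step 2: the bilinear estimate.} Apply Cauchy--Schwarz in $n$ and then Hölder, raising the $m$-sum to an even power $2k$, exactly as in Heath-Brown's argument. For $k\ge1$,
\[
\Big|\sum_{m,n}\Big|^{2k}\le \Big(\sum_n |b_n|^2\Big)^{k} N^{k-1}\sum_{n\le 2N}\Big|\sum_{m} a_m\left(\frac{n}{m}\right)\Big|^{2k}.
\]
Expanding the $2k$-th power gives sums over $m_1,\dots,m_{2k}$ of $\sum_n\left(\frac{n}{m_1\cdots m_{2k}}\right)$. Group according to whether the product $m_1\cdots m_{2k}$ is a square.
\begin{itemize}
\item \emph{Square case.} Since each $m_i$ is square-free, there are $\ll M^{k}(\log M)^{c}$ diagonal tuples, each contributing trivially $\ll N$.
\item \emph{Non-square case.} Here $n\mapsto\left(\frac{n}{m_1\cdots m_{2k}}\right)$ is a nonprincipal character of modulus $\le(2M)^{2k}$. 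Pólya--Vinogradov gives $\ll M^{k}\log M$ for the $n$-sum. Burgess would be better, but Pólya--Vinogradov already suffices when $N\ge M^{k}$.
\end{itemize}
Choosing $k$ according to the size of $N$ relative to $M$ (in the range $N\le M^{k}$ one instead interchanges the roles via reciprocity, or uses the large sieve for real characters) yields a saving that is a small power of $\min(M,N)$.

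\textbf{Step 3: the main obstacle, and a cleaner alternative.} The hard part is getting a uniform power saving when $M$ and $N$ are of comparable size, since a plain Pólya--Vinogradov bound is then too weak. The first tool I would reach for is Heath-Brown's large sieve for quadratic characters (\cite{HeathBrown}, the result used there), which gives
\[
\sum_{m,n}a_mb_n\left(\frac{n}{m}\right)\ll_\epsilon (MN)^{\epsilon}(MN)\big(M^{-1/2}+N^{-1/2}\big)
\]
for square-free odd variables. This bound alone gives the lemma, and comfortably, with exponent $1/32$ to spare against the $\log$ losses from Step 1. If one prefers to avoid that deep result, the Hölder/Burgess route of Step 2 with $k=4$ and a dyadic case split yields the exponent $1/32$ directly; this matches the form of the statement and is the argument I would write out. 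The only remaining checks are that the restriction to primes $\equiv1\pmod4$ and the congruence modulo $24$ are compatible with the character twists. They are: each of these is a multiplicative condition that absorbs into the coefficients $a_m$ and $b_n$.
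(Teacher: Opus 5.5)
Your outline is the right one: reciprocity to reduce to $M\le N$ (valid since $m\equiv n\equiv 1\pmod 4$), the eight characters modulo $24$ to detect $mn\equiv h\pmod{24}$, separating $mn\le X$, then Cauchy--Schwarz. The character detection is exactly the change from Heath-Brown's modulus-$8$ setting that the paper's one-line proof appeals to.

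The gap is the balanced range $M\asymp N$, which you correctly flag as the crux but do not resolve. The argument you say you would write out, H\"older with $k=4$ plus Burgess, gives nothing there. Each off-diagonal sum $\sum_{N<n\le 2N}\bigl(\frac{n}{m_1\cdots m_8}\bigr)$ has length $\asymp M$ but conductor typically $\asymp M^8$, whereas Burgess is nontrivial only for lengths exceeding $q^{1/4+\epsilon}\approx M^{2+\epsilon}$. Raising the moment only enlarges the conductor, so the right choice is $k=1$. Then the diagonal is $m_1=m_2$ (by square-freeness), contributing $MN^2$ to $|\Sigma|^2$. For $m_1\ne m_2$ the character $\bigl(\frac{\cdot}{m_1m_2}\bigr)$ is non-principal of modulus $\le 4M^2$, so Burgess with $r=2$ gives $\ll N^{1/2}M^{3/8+\epsilon}$, and
\[
\sum_{m,n}a_mb_n\Bigl(\frac{n}{m}\Bigr)\ll MN\bigl(M^{-1/2}+N^{-1/4}M^{3/16+\epsilon}\bigr)\ll MN\,M^{-1/16+\epsilon}\qquad (N\ge M).
\]
This single estimate is valid in all ranges, with no case split. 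Your large-sieve fallback is a much deeper, separate result of Heath-Brown, and it does not give the lemma ``alone''. Its $(MN)^{\epsilon}$ loss swamps the $\min(M,N)^{-1/2}$ saving once $N$ is a large power of $M$, so it would still need the $k=1$ P\'olya--Vinogradov bound for unbalanced ranges.

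A smaller problem of the same kind is in Step 1. The saving is only a power of $M=\min(M,N)$ while $N/M$ is unbounded, so losses of $\log(MN)$ or $N^{\epsilon}$ are not absorbed by the exponent $1/32$ (take $N\ge\exp(M)$). The removal of $mn\le X$ must therefore cost only a function of $M$. For instance:
\begin{itemize}
\item Assume $MN\le X\le 4MN$; otherwise the condition is vacuous or the sum is empty.
\item Split the $m$-range into $\Delta=M^{1/16}$ intervals $I$ with left endpoints $m_I$.
\item On each $I$, replace $n\le X/m$ by $n\le X/m_I$, at total cost $\ll MN/\Delta+M$.
\item Apply the Cauchy--Burgess bound to each piece and sum, which gives $\ll MN(\Delta^{1/2}M^{-1/2}+M^{-1/16+\epsilon})$.
\end{itemize}
Hence the whole sum is $\ll MN\,M^{-1/16+\epsilon}\ll MN\min(M,N)^{-1/32}$. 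Perron's formula with $c=1$ and $T=M$, losing only $\log M$, works equally well.
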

\noindent The proof is immediate using Lemma 4, in \cite{HeathBrown}. Although Lemma 4, in \cite{HeathBrown} is stated with modulo $8$, one can check that the same proof also works for modulo $24$.
Now it follows that
$$S(\A)\ll (\prod\limits_{uv}A_{uv})A_{ij}A_{kl}\left\{\min(A_{ij},A_{kl})\right\}^{-\frac{1}{32}}\ll X\left\{\min(A_{ij},A_{kl})\right\}^{-\frac{1}{32}},$$
by (\ref{eq:3}), and this leads to the following conclusion.
\begin{lemma}\label{lemma5}
$$S(\A)\ll X(\log X)^{-17},$$
if there exists a pair of linked variables $n_{ij}$, $n_{kl}$ with
$A_{ij},\: A_{kl}\geq \log^{544} X$.
\end{lemma}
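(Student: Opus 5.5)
The plan is to apply the bound already obtained just before the statement, namely
$$S(\A)\ll X\left\{\min(A_{ij},A_{kl})\right\}^{-\frac{1}{32}},$$
valid for any linked pair $n_{ij}$, $n_{kl}$, and then simply plug in the lower bounds. We recall how it arises, so that its hypotheses can be checked for every linked pair. First, isolate the linked pair and write $g(\n)=\left(\frac{n_{kl}}{n_{ij}}\right)a(n_{ij})b(n_{kl})$ with $|a|,|b|\le 1$. Next, fix the remaining fourteen variables $n_{uv}$. The constraints $n\le X$ and $n\equiv h \pmod{24}$ then become $n_{ij}n_{kl}\le X'$ and $n_{ij}n_{kl}\equiv h'\pmod{24}$ with $(h',24)=1$. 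Coprimality with the other variables is absorbed into $a$ and $b$. Every $n_{ij}$ is $\equiv 1 \pmod 4$, because all its prime factors are.

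Lemma \ref{lemma4} then applies to the inner double sum with $M=A_{ij}$ and $N=A_{kl}$, uniformly in $X'$ and $h'$. It gives
$$\ll A_{ij}A_{kl}\min(A_{ij},A_{kl})^{-1/32}.$$
Summing trivially over the fixed variables contributes at most $\prod_{uv\neq ij,kl}2A_{uv}$, since each $n_{uv}$ ranges over $(A_{uv},2A_{uv}]$. This yields $2^{16}\prod A_{uv}\cdot\min(A_{ij},A_{kl})^{-1/32}$, and by (\ref{eq:3}) this is $\ll X\min(A_{ij},A_{kl})^{-1/32}$.

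For the conclusion, suppose $A_{ij},A_{kl}\ge \log^{544}X$. Then $\min(A_{ij},A_{kl})^{-1/32}\le (\log X)^{-544/32}=(\log X)^{-17}$, so $S(\A)\ll X(\log X)^{-17}$, as claimed.

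There is one direction issue: if the symbol that actually appears in $g(\n)$ is $\left(\frac{n_{ij}}{n_{kl}}\right)$, swap the roles of the two variables. Lemma \ref{lemma4} is symmetric in $M$ and $N$ through the $\min$, so nothing changes.

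The only genuine point of care is checking that the hypotheses of Lemma \ref{lemma4} hold uniformly after fixing the other variables. In particular, the congruence condition must reduce to a single residue class $h'$ coprime to $24$. This holds because every $n_{uv}$ is coprime to $6$, so their product is invertible modulo $24$. Also, $M,N\gg1$ holds since $A_{ij}\ge 1$. The rest is bookkeeping.
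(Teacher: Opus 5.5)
Your proposal is correct and follows the paper's argument: you factor out the Jacobi symbol of the linked pair, apply Lemma~\ref{lemma4} to the inner double sum with the other fourteen variables fixed, sum trivially over those variables and use (\ref{eq:3}) to get $S(\mathbf{A})\ll X\min(A_{ij},A_{kl})^{-1/32}$, and then use $544/32=17$. One detail you left implicit is the coprimality condition $(n_{ij},n_{kl})=1$, which cannot be absorbed into $a$ or $b$; it is automatic, though, since the symbol $\left(\frac{n_{kl}}{n_{ij}}\right)$ vanishes whenever the two variables share a prime, which is how the paper handles it.
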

\noindent Let's analyze the case where  $A_{ij}\geq \log^{544} X$ and  $A_{kl}<\log^{544} X$ for every variable $n_{kl}$ which is linked to $n_{ij}$. Where $n^{\prime}$ is the product of these variables $n_{kl}$. By using the quadratic reciprocity law,  $g(\n)$ can be written in the form
$$g(\n)=4^{-\omega(n_{ij})}\left(\frac{n_{ij}}{n^{\prime}}\right)\chi (n_{ij})c,$$
where $\chi$ is a Dirichlet character modulo $24$. $\chi$ may depend on the variables $n_{uv}$ other than $n_{ij}$, and the other factor $c$ is independent of $n_{ij}$ and satisfies $|c|\leq 1$. We get that
\begin{equation}\label{eq:4}
|S(\A)|\leq \sum\limits_{n_{uv}}| \sum\limits_{n_{ij}} 4^{-\omega(n_{ij})}\left(\frac{n_{ij}}{n^{\prime}}\right)\chi (n_{ij})|,
\end{equation}
where the inner sum is subject to the following conditions:\\
$n_{ij}$ is square-free, $n_{ij}\equiv 1\pmod{4}$, $(n_{ij}, n_{uv})=1$ for all the other variables $n_{uv}$, and $n_{ij}\equiv h^{\prime}\pmod{24}$, $A_{ij}<n_{ij}\leq \min(2A_{ij},X^{\prime})$.
Where $h^{\prime}$ and $X^{\prime}$ depend on the other variables $n_{uv}$ and $(h^{\prime},24)=1$. We now apply the following lemma.
\begin{lemma}\label{lemma6}
Let $N>0$ be given. Then for arbitrary positive integers $q$, $r$ and any non-principal character $\chi\pmod{q}$, we have
$$\sum\limits_{n\leq x, n\equiv 1 \text{mod}(4), (n,r)=1} \mu^2(n)4^{-\omega(n)}\chi (n)\ll xd(r)\exp(-c\sqrt{\log x}),$$
with a positive constant $c=c_{N}$, uniformly for $q\leq \frac{1}{4}\log^N x$.
\end{lemma}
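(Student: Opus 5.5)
This is the analogue of Heath-Brown's Lemma 6, and I would prove it with the Selberg--Delange method for twisted Dirichlet series. First I detect the congruence condition. Since $n$ is odd, $\mathbf{1}_{n\equiv 1 \bmod 4}=\frac12\bigl(1+\chi_{-4}(n)\bigr)$. This splits the sum into two sums of the same shape, with characters $\chi$ and $\chi\chi_{-4}$ modulo $\mathrm{lcm}(q,4)\le 4q\le \log^N x$.

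The second character is non-principal unless $\chi=\chi_{-4}$ restricted to integers prime to $q$. In that exceptional case the main term of the second sum exactly cancels the main term of the first. This is the same phenomenon as in the paper's setting, where all primes are $\equiv 1\pmod 4$. So in either case I am reduced to showing
\[
\sum_{n\le x,\ (n,r)=1}\mu^2(n)4^{-\omega(n)}\psi(n)\ll x\,d(r)\exp(-c\sqrt{\log x})
\]
for a non-principal $\psi$ of modulus $Q\le\log^N x$. In the exceptional case the cancellation is handled by writing the combined multiplicative function directly and observing that it has generating function $\prod_p(1+\frac{(1+\chi_{-4}(p))\chi(p)}{8p^s})$, which again has no main term.

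For the reduced sum I form
\[
F(s)=\sum_{(n,r)=1}\mu^2(n)4^{-\omega(n)}\psi(n)n^{-s}=\prod_{p\nmid r}\Bigl(1+\frac{\psi(p)}{4p^s}\Bigr).
\]
I factor this as $F(s)=L(s,\psi)^{1/4}G(s)H_r(s)$. Here $G$ is an Euler product absolutely convergent and uniformly bounded for $\sigma>\frac12+\varepsilon$. The factor $H_r(s)=\prod_{p\mid r}(1+\psi(p)p^{-s}/4)^{-1}$ is bounded by $\prod_{p\mid r}(1-\frac14p^{-1/2})^{-1}\ll d(r)$ in that region.

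Next I apply Perron's formula with truncation height $T=\exp(\sqrt{\log x})$. I shift the contour into the classical zero-free region $\sigma\ge 1-c_0/\log(QT)$, excising the possible Siegel zero. Because $Q\le\log^N x$, Siegel's theorem (with ineffective constant depending on $N$) gives $\beta\le 1-c(N)Q^{-1/(2N)}$, which still yields the bound $\exp(-c\sqrt{\log x})$. In the region, $\log L(s,\psi)\ll\log(QT)$ is standard, so $L^{1/4}$ is well defined and of at most polylogarithmic size. Since $\psi$ is non-principal, $F$ is regular at $s=1$ and there is no main term.

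The integrals on the shifted contour and the horizontal segments give $x\,d(r)\exp(-c\sqrt{\log x})$. The truncation error is bounded trivially since the coefficients are at most $1$. Alternatively, one can avoid the branch-cut issues entirely by citing the uniform Selberg--Delange theorem (Tenenbaum, Ch.~II.5) for $z=\frac14$ twisted by $\psi$, whose error term is uniform for moduli up to a power of $\log x$.

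The main obstacle is uniformity in $q$ near a possible exceptional zero, since we need $q$ as large as $\log^N x$. I would handle it via Siegel's theorem, which explains why $c=c_N$ depends on $N$ and is ineffective. The dependence on $r$ only enters through $H_r$ and contributes the factor $d(r)$.
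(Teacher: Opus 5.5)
Your argument fails in exactly the case you single out as exceptional: $4\mid q$ and $\chi=\chi_{-4}\chi_0^{(q)}$, with $\chi_0^{(q)}$ the principal character modulo $q$. There $\chi$ itself is non-principal, so the first of your two sums has no main term at all. Meanwhile $\chi\chi_{-4}=\chi_0^{(q)}$ is principal, and the second sum is $\frac12\sum_{n\le x,\,(n,qr)=1}\mu^2(n)4^{-\omega(n)}$, which for fixed $q,r$ has exact order $x(\log x)^{-3/4}$. Nothing cancels it.

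Your fallback does not rescue this. First, $\mathbf{1}_{n\equiv 1\,(4)}$ is not multiplicative (e.g.\ $3\cdot 7\equiv 1 \pmod 4$). Second, $\prod_p\bigl(1+\frac{(1+\chi_{-4}(p))\chi(p)}{8p^s}\bigr)$ is the Dirichlet series of the sum over $n$ all of whose prime factors are $\equiv 1\pmod 4$, not over $n\equiv 1\pmod 4$. Even that series has a main term here. In the exceptional case it equals $\prod_{p\equiv 1\,(4),\,p\nmid q}\bigl(1+\frac{1}{4p^{s}}\bigr)$, which behaves like $(\zeta(s)L(s,\chi_{-4}))^{1/8}$ at $s=1$ and produces a contribution of order $x(\log x)^{-7/8}$.

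In fact no argument can handle this case, because the statement is false there. Take $q=4$, $\chi=\chi_{-4}$, $r=1$. Every term has $\chi(n)=1$, so the sum is at least $\frac14\#\{p\le x:\ p\equiv 1\pmod 4\}\gg x/\log x$, which is not $O(x\exp(-c\sqrt{\log x}))$. The lemma needs the additional hypothesis that $\chi$ is non-trivial on the classes $n\equiv 1\pmod 4$, equivalently that $\chi\chi_{-4}$ is non-principal as well.

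Under that hypothesis your proof is correct and is essentially what the paper intends; the paper simply invokes Heath-Brown's Lemma 6. You split by $\frac12(\chi_0^{(4)}+\chi_{-4})$ into two non-principal characters modulo $\mathrm{lcm}(q,4)\le 4q\le\log^N x$, which explains the factor $\frac14$. Your $L(s,\psi)^{1/4}$ Perron, zero-free region and Siegel argument then proves Heath-Brown's Lemma 6, and you apply it to each character.

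The restriction costs the paper nothing. In its application the character is $\left(\frac{\cdot}{n'}\right)\chi\psi$ with $n'>1$ odd, which is never trivial on $n\equiv 1\pmod 4$. But you should state the corrected hypothesis instead of asserting a cancellation that does not happen.
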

\noindent The proof is immediate using Lemma 6, in \cite{HeathBrown}.
\noindent  By using the orthogonality principle of Dirichlet characters modulo $24$, we can express the condition $n_{ij}\equiv h^{\prime}\pmod{24}$ as follows: 
$$\frac{1}{8}\sum_{\psi\:(\text{mod}\:24)}\psi (n_{ij}) \overline{\psi (h^{\prime})}.$$
Therefore we insert a factor of
$$\frac{1}{8}\sum_{\psi\:(\text{mod}\:24)}\psi (n_{ij}) \overline{\psi (h^{\prime})},$$
to remove the condition $n_{ij}\equiv h^{\prime}\pmod{24}$.
Taking
\begin{center}
$q=24n^{\prime}\ll (\log^{544} X)^{15}$ and $r=\prod n_{uv}$,
\end{center}
we conclude that
$$S(\A)\ll A_{ij}\exp(-c\sqrt{\log A_{ij}})\sum\limits_{n_{uv}} d(r),$$
providing that $n^{\prime}\neq 1$. Since the variables $n_{uv}$ are pairwise coprime, we have $d(r)=\prod d(n_{uv})$. Moreover for each variable $n_{kl}$ we have
$$\sum\limits_{n_{uv}}d(n_{uv})\ll A_{kl}\log A_{kl}\ll A_{kl}\log X,$$
hence by (\ref{eq:3}) we have
$$S(\A)\ll X(\log X)^{15}\exp(-c\sqrt{\log A_{ij}}),$$
providing that $n^{\prime}\neq 1$. As a consequence of the above discussion we get
\begin{lemma}\label{lemma7}
There exists an absolute constant $\kappa>0$ such that 
$$S(\A)\ll X(\log X)^{-17},$$  
if there exists a pair of linked variables $n_{ij}$ and $n_{kl}$ with
\begin{equation}\label{eq:5}
A_{ij}\geq \exp\left\{\kappa (\log \log X)^{2}\right\}
\end{equation} 
and $n_{kl}>1$.
\end{lemma}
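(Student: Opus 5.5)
The plan is to convert the bound already obtained just before the statement into the stated one. That bound is
$$S(\A)\ll X(\log X)^{15}\exp(-c\sqrt{\log A_{ij}}),$$
valid whenever $n^{\prime}\neq 1$. It was derived under the assumption that every variable $n_{kl}$ linked to $n_{ij}$ satisfies $A_{kl}<\log^{544}X$. So I would split into two cases according to the sizes of the linked partners of $n_{ij}$.

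\emph{First case.} Some variable $n_{kl}$ linked to $n_{ij}$ has $A_{kl}\geq \log^{544}X$. Condition (\ref{eq:5}) gives $A_{ij}\geq \exp\{\kappa(\log\log X)^2\}$, which exceeds $\log^{544}X$ once $\kappa>0$ is fixed and $X$ is large. Then $n_{ij},n_{kl}$ is a pair of linked variables with both $A_{ij},A_{kl}\geq\log^{544}X$, and Lemma \ref{lemma5} gives $S(\A)\ll X(\log X)^{-17}$ directly. Small $X$ is absorbed into the implied constant.

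\emph{Second case.} Every variable linked to $n_{ij}$ has $A_{kl}<\log^{544}X$. Here we are exactly in the setting of (\ref{eq:4}). The hypothesis that some linked $n_{kl}>1$ means $n^{\prime}\neq1$. I would also need the character $\psi\chi\left(\frac{\cdot}{n^{\prime}}\right)$ to be non-principal for every $\psi$ modulo $24$. Since $n^{\prime}$ is square-free, coprime to $24$ and greater than $1$, $\left(\frac{\cdot}{n^{\prime}}\right)$ is a primitive character of conductor divisible by $n^{\prime}$. Multiplying by a character modulo $24$ cannot make it principal, so this holds. Its modulus is $q=24n^{\prime}\ll(\log X)^{544\cdot15}$, which is $\leq\frac14\log^N A_{ij}$ for a fixed $N$ because $\log A_{ij}\geq\kappa(\log\log X)^2$. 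This is the one slightly delicate point: Lemma \ref{lemma6} requires $q$ to be at most a power of $\log$ of the \emph{summation length} $A_{ij}$, not of $X$. With $\log A_{ij}\gg(\log\log X)^2$ we have $\log\log X\ll\sqrt{\log A_{ij}}$, so $(\log X)^{8160}\leq (\log A_{ij})^{N}$ holds for a suitable fixed $N$. Hence Lemma \ref{lemma6} applies with a fixed $c=c_N$, and we obtain the displayed bound. The sum over $n_{ij}$ is over a dyadic range $(A_{ij},\min(2A_{ij},X^{\prime})]$, so it is a difference of two initial sums, each controlled by Lemma \ref{lemma6}.

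\emph{Conclusion and choice of $\kappa$.} I would choose $\kappa$ large in terms of $c$. Then
$$\exp(-c\sqrt{\log A_{ij}})\leq\exp(-c\sqrt{\kappa}\log\log X)=(\log X)^{-c\sqrt\kappa}.$$
Taking $c\sqrt\kappa\geq32$ gives $S(\A)\ll X(\log X)^{-17}$. The only circularity to watch is that $c=c_N$ depends on $N$, which must be fixed before $\kappa$. This works because $N$ can be taken as any fixed number with $(\log X)^{8160}\ll(\log A_{ij})^N$. That inequality holds for all $\kappa\geq1$ as soon as $N\geq 2\cdot8160$, using $\log A_{ij}\geq(\log\log X)^2$. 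So I would fix $N$ first, then $c$, then $\kappa$. This ordering is the main obstacle. The rest follows from the earlier lemmas, with an absolute $\kappa$.
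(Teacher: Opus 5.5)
Your route is the paper's own: Lemma \ref{lemma5} when some partner linked to $n_{ij}$ has $A_{kl}\geq\log^{544}X$, otherwise Lemma \ref{lemma6} with $q=24n'$, and finally $c\sqrt{\kappa}\geq 32$. The case split, the non-principality of $\psi\chi\left(\frac{\cdot}{n'}\right)$ and the dyadic bookkeeping are fine. The gap is exactly at the point you flagged as delicate, and your resolution of it is false. From $\log A_{ij}\geq\kappa(\log\log X)^{2}$ you only get $(\log A_{ij})^{N}\geq\kappa^{N}(\log\log X)^{2N}$, which is a power of $\log\log X$. Taking logarithms, $N\log\kappa+2N\log\log\log X$ is eventually smaller than $8160\log\log X$. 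So $(\log X)^{8160}\leq\frac14(\log A_{ij})^{N}$ fails for every fixed $N$ when $A_{ij}$ is near $\exp\{\kappa(\log\log X)^{2}\}$; in particular $N\geq 2\cdot 8160$ does not work. What $\log\log X\ll\sqrt{\log A_{ij}}$ actually gives is only $q\leq\exp\{O(\sqrt{\log A_{ij}})\}$. That is far outside the Siegel--Walfisz range $q\leq\frac14\log^{N}x$ of Lemma \ref{lemma6}, and in that larger range the bound $x\exp(-c\sqrt{\log x})$ is not available without controlling a possible exceptional zero. The inequality you need holds only when $\log A_{ij}\gg(\log X)^{8160/N}$.

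Consequently the following sub-case is left open: $\exp\{\kappa(\log\log X)^{2}\}\leq A_{ij}<\exp\{(\log X)^{\delta}\}$, all linked partners below $\log^{544}X$, and $n'$ larger than about $(\log A_{ij})^{N}$. Lemma \ref{lemma5} does not apply there, Lemma \ref{lemma4} only saves $A_{kl}^{-1/32}$, and Lemma \ref{lemma6} is out of range. Fixing $N$, then $c_N$, then $\kappa$ does not change this. The paper's text has the same lacuna: it applies Lemma \ref{lemma6} with $q=24n'\ll(\log^{544}X)^{15}$ without checking the range against $A_{ij}$. So you located the weak spot correctly, but did not close it. Your argument does prove the lemma under the stronger hypothesis $A_{ij}\geq\exp\{(\log X)^{\delta}\}$, taking $N>8160/\delta$; there Lemma \ref{lemma6} even gives $X(\log X)^{15}\exp\{-c_N(\log X)^{\delta/2}\}$. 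Using $C=\exp\{(\log X)^{\delta}\}$ in Lemma \ref{lemma8} then costs a factor $(\log X)^{O(\delta)}$ instead of $(\log\log X)^{8}$ in the final error term. Keeping the threshold $\exp\{\kappa(\log\log X)^{2}\}$ would require a further input handling possible exceptional zeros for moduli between roughly $(\log A_{ij})^{N}$ and $(\log X)^{8160}$.
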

\noindent We will now find an asymptotic bound to deal with the case where at most three of the variables $n_{ij}$ take value in $(A_{ij},2A_{ij}]$ satisfying (\ref{eq:5}).\\

\noindent Define $C=\exp\left\{\kappa(\log \log X)^{2}\right\}$.
\begin{lemma}\label{lemma8}
We have 
$$\sum\limits_{\A}|S(\A)|\ll X(\log X)^{-\frac{5}{8}}(\log \log X)^{8},$$
where $\A$ denotes all sets in which either there exists at most three elements $A_{ij}\geq C$ or there exists linked variables $n_{ij}$ and $n_{kl}$ with $A_{ij}\geq C$ and $n_{kl}>1$.
\end{lemma}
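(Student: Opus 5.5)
The bound splits over the two kinds of tuples $\A$ named in the statement, and each kind is handled separately; this mirrors Heath-Brown's Lemma 8.

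\textbf{Tuples with a linked pair.} Suppose there is a linked pair $n_{ij}, n_{kl}$ with $A_{ij}\geq C$ and $n_{kl}>1$, so that $A_{kl}\geq 1$ is nontrivial. Lemma \ref{lemma7} gives $S(\A)\ll X(\log X)^{-17}$ for each such $\A$. There are only $O((\log X)^{16})$ tuples $\A$ in total. Summing, these tuples contribute $O(X(\log X)^{-1})$, which is acceptable.

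\textbf{Tuples with at most three large $A_{ij}$.} Here I would abandon cancellation and bound $|g(\n)|\le\prod 4^{-\omega(n_{ij})}$ trivially. Summing $|S(\A)|$ over all such $\A$ is then bounded by a sum over $n\le X$ of products of $4^{-\omega}$ weights. Regroup this by $n$: for each square-free $n$ with all prime factors $\equiv 1 \pmod 4$, the number of ways to distribute its primes among the 16 variables is $16^{\omega(n)}$, weighted by $4^{-\omega(n)}$. The constraint is that at most three variables exceed $C$. Call the product of the small variables $m$. Then $n=m\,n'$, where $m$ is a product of at most 13 variables each at most $2C$, so $m\le (2C)^{13}$, and $n'$ is spread over at most three variables. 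This gives the bound
\[
\sum_{\A}|S(\A)| \ll \sum_{m\le (2C)^{13}} \frac{\mu^2(m)\,13^{\omega(m)}}{4^{\omega(m)}}\;\sum_{n'\le X/m} \mu^2(n')\,\Bigl(\frac{3}{4}\Bigr)^{\omega(n')},
\]
with both $m$ and $n'$ restricted to primes $\equiv 1\pmod 4$, and with the three large variables chosen out of 16 in $O(1)$ ways.

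For the inner sum I would use the Selberg--Delange/Wirsing mean value theorem. Primes $\equiv 1 \pmod 4$ have density $\tfrac12$, so the inner sum is $\ll (X/m)(\log X)^{\frac{3}{4}\cdot\frac12-1}=(X/m)(\log X)^{-5/8}$. This accounts for the exponent $-\tfrac58$ in the statement.

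The outer sum is bounded by an Euler product over $p\le (2C)^{13}$:
\[
\prod_{p\le (2C)^{13},\ p\equiv 1 \ (4)}\Bigl(1+\frac{13}{4p}\Bigr)\ll (\log C)^{13/8}.
\]
Since $\log C=\kappa(\log\log X)^2$, this is $(\log\log X)^{13/4}$, which is within the stated $(\log\log X)^8$.

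\textbf{Main obstacle.} The delicate point is that the inner sum needs the true $n'\le X/m$ scale, not the trivial $X/m$ bound. For $m$ close to $X$ the logarithm $\log(X/m)$ could be small. However, $m\le (2C)^{13}=X^{o(1)}$, so $\log(X/m)\asymp\log X$ and the estimate holds uniformly. Also, contributions where the large-variable products are genuinely at most three factors only change constants. The power $(\log\log X)^{8}$ leaves slack for cruder bookkeeping, for instance counting the dyadic choices of the small $A_{ij}$ as $(\log C)^{13}$ if one sums over $\A$ rather than regrouping by $n$.
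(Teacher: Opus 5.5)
This is essentially the paper's proof. Lemma~\ref{lemma7} together with the $O((\log X)^{16})$ count of tuples handles the linked case. In the other case you use the trivial bound $|g(\n)|\le 4^{-\omega(n)}$, split $n=mn'$ into small and large parts with the corresponding multiplicities, apply (\ref{eq:6}) with $\gamma=\frac{3}{4}$ using $\log(X/m)\gg\log X$, and then bound $\sum_m 4^{\omega(m)}m^{-1}$. Your Euler product over $p\equiv 1\pmod 4$ does the job of the paper's partial-summation bound $\log^4 M$ and gives a better power of $\log\log X$.

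There are two slips to fix.
\begin{itemize}
\item Having at most three large variables means at least $13$ (and up to $16$) small ones, not at most $13$. So you need $m\le(2C)^{16}$ and multiplicity up to $16^{\omega(m)}$. The Euler product $\prod_p(1+4/p)$ over primes $p\equiv 1\pmod 4$ up to $(2C)^{16}$ is then $\ll(\log C)^{2}\ll(\log\log X)^{4}$ rather than $(\log\log X)^{13/4}$, which is still within the statement.
\item Your closing remark is false: $(\log C)^{13}\asymp(\log\log X)^{26}$ is not absorbed by $(\log\log X)^{8}$. Your argument never uses it, so nothing breaks.
\end{itemize}
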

\begin{proof}
Let's take that $C$ is some power of $2$. Suppose $\sum^{\prime}$ denotes that at most three of the $A_{ij}$ satisfy $A_{ij}\geq C$, then we have 
$$\sum_{A_{ij}}{}^{'}|S(\A)|\leq \sum_{d_{1},\ldots ,d_{16}\leq X}4^{-\omega(d_{1})}\ldots 4^{-\omega(d_{16})},$$
where $d_{i}$ is square-free, pairwise coprime, product of primes congruent to $1$ modulo $4$ and at most three of the $d_{i}$ have $d_{i}\geq 2C$. Let us write
\begin{center}
$m=\prod\limits_{d_{i}<2C}d_{i}$, $\quad n=\prod\limits_{d_{i}\geq 2C}d_{i}$,
\end{center}
therefore $m\leq (2C)^{16}$ and $n\leq \frac{X}{m}$. Moreover we observe that each value of $m$ can appear at most $16^{\omega(m)}$ times, and each value of $n$ can appear atmost $\binom{16}{3}3^{\omega(n)}$ times.\\

\noindent As a consequence we get that
$$\sum_{A_{ij}}{}^{'}|S(\A)|\ll \sum\limits_{m}4^{\omega(m)}\sum\limits_{n}\left(\frac{3}{4}\right)^{\omega(n)}.$$ 
The inner sum is restricted to the conditions $n\leq \frac{X}{m}$, n is product primes congruent to $1$ modulo $4$.
We now use the following bound
\begin{equation}\label{eq:6}
\sum\limits_{n\leq N}\gamma^{\omega(n)}\ll N(\log N)^{\frac{\gamma}{2}-1},
\end{equation}
which is true for any fixed $\gamma>0$. The sum is over $n\leq N$, $n$ is product of primes congruent to $1$ modulo $4$. As
$$\frac{X}{m}\gg XC^{-16}\gg X^{\frac{1}{2}},$$
we have $\log \frac{X}{m}\gg \log X$, and we get
\\

$$\sum_{A_{ij}}{}^{'}|S(\A)|\ll X(\log X)^{\frac{3} {8}-1}\sum\limits_{m}4^{\omega(m)}m^{-1} = X(\log X)^{-\frac{5} {8}}\sum\limits_{m}4^{\omega(m)}m^{-1} $$

Applying the partial summation and (\ref{eq:6}) we obtain
$$\sum\limits_{m\leq M}4^{\omega(m)}m^{-1}\ll \log^{4}M.$$
Hence we have
$$\sum_{A_{ij}}{}^{'}|S(\A)|\ll X(\log X)^{-\frac{5}{8}}\log^{4}C\ll  X(\log X)^{-\frac{5}{8}}(\log \log X)^{8}.$$
Now applying lemma \ref{lemma7}, we get the desired result.
\end{proof}
\section{averaging over \textbf{n}; characters modulo 24}
\noindent In this section our main goal is to find those sums $S(\A)$ that are not covered by Lemma \ref{lemma8}. These occur when we have four or more elements $A_{ij}\geq C$. Without loss of generality we can assume that $A_{10}\geq C$ and $A_{20}\geq C$. Therefore we have
$$n_{13}=n_{14}=n_{23}=n_{24}=n_{31}=n_{32}=n_{34}=n_{41}=n_{42}=n_{43}=1,$$
as they are linked to $A_{10}$ or $A_{20}$. Therefore two or more among $A_{12}$, $A_{21}$, $A_{30}$ and $A_{40}$ must be at least $C$. If $A_{12}\geq C$, then $n_{30}=n_{40}=1$, since they are linked to $n_{12}$. Same conclusion holds if $A_{21}\geq C$. On the other hand, if $A_{30}$ or $A_{40}$ is at least $C$, then we will have $n_{12}=n_{21}=1$. Therefore we deduce that if $A_{10},\: A_{20}\geq C$, then we must have either $A_{12},\: A_{21}\geq C$ and the remaining variables are $1$, or $A_{30},\: A_{40}\geq C$ and the remaining variables are $1$. We can show that a similar conclusion holds if $A_{i0},\: A_{j0}\geq C$.\\

\noindent Now we suppose that there is only one element $A_{i0}$ such that $A_{i0}\geq C$. Without loss of generality assume that this is $A_{10}$. Then we have
$$n_{23}=n_{24}=n_{32}=n_{34}=n_{42}=n_{43}=1,$$
since they are linked to $A_{10}$. If $A_{12},\: A_{21}\geq C$, then also we have
$$n_{13}=n_{14}=n_{30}=n_{31}=n_{40}=n_{41}=1,$$
which contradicts the fact that there are four or more elements $A_{ij}\geq C$. Same argument applies if $A_{13},\: A_{31}\geq C$ or $A_{14},\: A_{41}\geq C$. Therfore we  have either $A_{12},\: A_{13},\: A_{14}\geq C$, or $A_{21},\: A_{31},\: A_{41}\geq C$. We note that in both the cases the remaining variables $n_{ij}=1$, as each one is linked to a variable $n_{kl}$ with $A_{kl}\geq C$.\\

\noindent Finally we are left with the case in which all of the variables $A_{i0}<C$. Suppose $A_{12},\: A_{13}\geq C$, then we have
$$n_{20}=n_{21}=n_{23}=n_{24}=n_{30}=n_{31}=n_{34}=n_{40}=n_{41}=1.$$
If also $A_{14}\geq C$, then $n_{42}=n_{43}=1$. which contradicts the fact that there are four or more elements $A_{ij}\geq C$.Hence we have $A_{42},\: A_{43}\geq C$, and all the remaining variables $n_{ij}$ will be $1$. An analogous argument holds if $A_{ij},\: A_{ik}\geq C$ with $i,\:j,\:k$ distinct. So we are left with the possibility that $A_{ij}\geq C$ for four different values of $i$. Without loss of generality, we can assume that one of them is $A_{12}$, then we have
$$n_{20}=n_{23}=n_{24}=1,$$
since they are linked to $n_{12}$, and so $A_{21}\geq C$. $n_{34}$ and $n_{43}$ are the only variables which are not linked  neither  to $n_{12},$ nor linked to $n_{21}$. It implies that $A_{34},\: A_{43}\geq C$, and the remaining variables $n_{ij}=1$.\\

\noindent The upshot of the preceding discussion is the following lemma.
\begin{lemma}\label{lemma9}
A sum $S(\A)$ which is not covered by Lemma \ref{lemma8} will have exactly four elements $A_{ij}\geq C$, and the remaining variables $n_{kl}$ will be $1$. The possibility of the indices $ij$ are 
\begin{align*}
10,20,30,40&  &i0,j0,ij,ji&  &i0,ij,ik,il\\
i0,ji,ki,li&  &ij,ik,lj,lk&  &ij,ji,kl,lk\\
\end{align*}
\noindent where $i$, $j$, $k$, $l$ denote different non-zero indices.   
\end{lemma}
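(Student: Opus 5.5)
The proof is a combinatorial case analysis on the linking graph of the 16 variables $n_{ij}$. Call $n_{ij}$ \emph{large} if $A_{ij}\geq C$.

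If $S(\A)$ is not covered by Lemma \ref{lemma8}, two facts hold. First, at least four variables are large. Second, whenever a large variable $n_{ij}$ is linked to $n_{kl}$, we have $n_{kl}=1$. In particular, no two large variables are linked, since a large variable exceeds $C>1$. So the large indices form an independent set in the linking graph, and every variable linked to a large one equals $1$. The lemma therefore reduces to two claims:
\begin{itemize}
\item every independent set of size at least four in the linking graph is contained in one of the listed four-element patterns;
\item every variable outside such a pattern is linked to some member of it.
\end{itemize}
Granting these, any variable not in the pattern is $1$, and since at least four variables are large, the large set is exactly the pattern.

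To make this checkable, I would first write down the linking relation explicitly from the definition: $n_{ij}$ and $n_{kl}$ are linked iff $i\neq k$ and exactly one of ``$l\notin\{0,i\}$'', ``$j\notin\{0,k\}$'' holds. In practice:
\begin{itemize}
\item $n_{i0}$ and $n_{k0}$ are unlinked;
\item $n_{i0}$ is linked to $n_{kl}$ ($k\neq i$, $l\neq 0$) iff $l\neq i$;
\item for $j,l\neq 0$, $n_{ij}$ and $n_{kl}$ are linked iff exactly one of $l\neq i$, $j\neq k$ holds.
\end{itemize}
As consistency checks, $n_{ij}$ and $n_{ji}$ are unlinked, and $n_{ij},n_{kj}$ (with $i,j,k$ distinct) are unlinked.

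The case split then follows the discussion preceding the lemma, organised by how many large variables have second index $0$:
\begin{itemize}
\item \textbf{At least two, say $n_{10},n_{20}$.} All variables linked to $n_{10}$ or $n_{20}$ are killed. This leaves $n_{30},n_{40},n_{12},n_{21}$. Among these, $n_{12}$ and $n_{21}$ are each linked to $n_{30}$ and $n_{40}$. So the large set is either $\{10,20,30,40\}$ or $\{10,20,12,21\}$, matching ``$10,20,30,40$'' and ``$i0,j0,ij,ji$''.
\item \textbf{Exactly one, say $n_{10}$.} This kills $n_{23},n_{24},n_{32},n_{34},n_{42},n_{43}$. Of the survivors, the pairs $\{n_{1j},n_{j1}\}$ cannot both be large: if they were, the remaining candidates would be killed and fewer than four large variables would remain. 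One then checks which survivors are mutually unlinked. This forces $\{12,13,14\}$ or $\{21,31,41\}$, giving ``$i0,ij,ik,il$'' and ``$i0,ji,ki,li$''.
\item \textbf{None.} Split according to whether some first index $i$ occurs twice, say $n_{ij},n_{ik}$ large. A third large $n_{il}$ would force too few large variables. So one must take $n_{lj},n_{lk}$, giving ``$ij,ik,lj,lk$''. Otherwise the four large variables have distinct first indices. Starting from $n_{12}$ forces $n_{21}$ as the only survivor with first index $2$, and then $n_{34},n_{43}$, giving ``$ij,ji,kl,lk$''.
\end{itemize}

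In each terminal configuration I would verify directly that every one of the other twelve variables is linked to at least one of the four. This shows those twelve must equal $1$, and so exactly four variables are large.

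The main obstacle is exhaustiveness and correctness of the linking computations, especially the asymmetric cases and the terminal verification, where a single mis-evaluated link would break the list. To guard against this, I would tabulate the $16\times 16$ linking matrix once, which is routine given the explicit criterion above, and read every step off it rather than recomputing by hand. With the table in place, symmetry under permuting the nonzero indices $\{1,2,3,4\}$ reduces the work to the representative cases above.
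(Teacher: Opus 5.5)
Your proposal is correct and follows the paper's own argument. It uses the same case split by how many large variables have second index $0$, the same subcases, and the same closing observation that every remaining variable is linked to a large one and hence equals $1$. Your explicit linking criterion, the symmetry under permutations of $\{1,2,3,4\}$, and the final check that each of the $24$ patterns is linked to all twelve other variables only make explicit what the paper checks tersely; for instance, the paper omits $n_{32}$ from one of its lists of variables forced to be $1$.
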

\noindent In view of lemma \ref{lemma9}, we are left with these $24$ types of sum. We shall re-label the variables $n_{ij}$ which appear non-trivially as $n_{1},\ldots,n_{4}$
and write $N_{1},\ldots,N_{4}$ for corresponding $A_{ij}$. We shall define the variables $n_{i}$ and $n_{j}$ are `joined' if both the Jacobi symbols
\begin{center}
$\left(\frac{n_{i}}{n_{j}}\right)$, $\left(\frac{n_{j}}{n_{i}}\right)$
\end{center}
appear in the definition of $g(\n)$. Thus $n_{ij}$ and $n_{kl}$ are joined if $i\neq k$ and $j,l\neq i,k,0$. If two variables are not joined we shall say that they are `independent'. By abuse of terminology, we shall also refer to the indices $ij$ and $kl$ as being `joined' or `independent', as appropriate. For each $\A$ in Lemma \ref{lemma9}, $S(\A)$ can be written in the form
\begin{equation}\label{eq:7}
\sum\limits_{n_{1},\ldots,n_{4}}\chi_{1}(n_{1})\chi_{2}(n_{2})\chi_{3}(n_{3})\chi_{4}(n_{4})PQ,\quad Q=4^{-\omega(n_{1}\ldots n_{4})}
\end{equation}
Where the variables $n_{i}$ are square-free, pairwise coprime, $n_{i}\equiv 1\pmod{4}$and satisfy $N_{i}<n_{i}\leq 2N_{i}$. Here the characters $\chi_{i}$ are to modulus $24$, and are arising from $\left(\frac{-1}{\alpha}\right)$ and $\left(\frac{3}{\beta}\right)$ in the definition of $g(\n)$ (as in Lemma \ref{lemma3}) and $P$ is the product of the following expressions
$$(-1)^{\frac{(n_{i}-1)(n_{j}-1)}{4}},$$
one for each pair of joined variables.\\
\noindent We can observe that $P=1$. As each $n_{i}\equiv 1\pmod{4}$. Moreover we observe that $\left(\frac{-1}{\alpha}\right)=1$  \\

\noindent  We will now proceed by applying the following two lemmas. 
\begin{lemma}\label{lemma10}(Lemma 10, \cite{HeathBrown})
Let $X>0$ and $M,N\geq C>0$ be given. Then for an arbitrary positive integer $r$, any integer h such that $(h,24)=1$, and any distinct characters $\chi_{1},\chi_{2}\pmod{24}$, we have
$$\sum\limits_{m,n}\mu^{2}(m)\mu^{2}(n)4^{-\omega(m)-\omega(n)}\chi_{1}(m)\chi_{2}(n)\ll d(r)X\exp(-c\sqrt{\log C})\log X,$$
for some positive absolute constant $c$, where the sum is over coprime variables satisfying the conditions
$$M<m\leq 2M,\quad N<n\leq 2N,\quad mn\leq X,\quad mn\equiv h\pmod{24},\quad (mn,r)=1.$$
\end{lemma}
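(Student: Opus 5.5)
Since $\chi_1\neq\chi_2$, the character $\chi_1\overline{\chi_2}$ is non-principal modulo $24$; this is the only feature of the hypotheses we exploit, following Heath-Brown's proof of his Lemma 10 with modulus $8$ replaced by $24$. First we remove the congruence $mn\equiv h\pmod{24}$ by orthogonality, inserting
$$\frac{1}{8}\sum_{\psi\,(\mathrm{mod}\,24)}\overline{\psi}(h)\,\psi(m)\psi(n).$$
This reduces the problem to bounding, for each fixed $\psi$, the sum with weights $\chi_1\psi(m)\,\chi_2\psi(n)$ and no congruence condition. Since $\chi_1\neq\chi_2$, at least one of $\chi_1\psi$ and $\chi_2\psi$ is non-principal for every $\psi$. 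By symmetry we may assume $\chi_1\psi$ is non-principal, so that $m$ carries the cancelling character.

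Next we handle the coupling between the variables. The coprimality $(m,n)=1$ is absorbed by fixing $n$ and imposing $(m,rn)=1$. The condition $m\equiv 1\pmod 4$ is implicit, since all variables are products of primes $\equiv 1\pmod 4$, or it can be imposed as in Lemma \ref{lemma6}. For fixed $n$, the variable $m$ runs over $M<m\leq \min(2M,X/n)$, and we write this as a difference of two sums $\sum_{m\leq y}$. Lemma \ref{lemma6}, applied with $q=24$, the character $\chi_1\psi$, and $r$ replaced by $rn$, bounds each such sum by
$$\ll y\,d(rn)\exp(-c\sqrt{\log y}).$$
The relevant range has $y\geq M\geq C$, so this is $\ll M d(r)d(n)\exp(-c\sqrt{\log C})$. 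We may assume $M\leq X$, since otherwise the sum is empty. The $q$-uniformity in Lemma \ref{lemma6} is trivial here because $q=24$.

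Finally we sum over $n$. Since the sum is nonempty only if $MN\leq X$, we get
$$\sum_{N<n\leq 2N} d(n)\,4^{-\omega(n)}\ll N\log X,$$
and hence a total of $\ll d(r)\,MN\,\log X\exp(-c\sqrt{\log C})\ll d(r)X\exp(-c\sqrt{\log C})\log X$. This is the claimed bound, after summing over the $8$ characters $\psi$.

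The main obstacle is the case where the non-principal character sits on $n$ rather than $m$. There we repeat the argument with the roles of $m$ and $n$ swapped, which is legitimate because both $M,N\geq C$ and the problem is otherwise symmetric. A smaller point is the uniformity of Lemma \ref{lemma6} in the auxiliary integer $rn$. This is exactly what the factor $d(r)$ absorbs, and it also explains the extra factor $\log X$ coming from the divisor sum over $n$.
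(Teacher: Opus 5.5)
Your argument is Heath-Brown's proof of his Lemma 10 with $8$ replaced by $24$, which is all the paper appeals to, and its skeleton is right. You use orthogonality over the eight characters modulo $24$ and note that $\chi_1\psi$ and $\chi_2\psi$ cannot both be principal. You then apply a Siegel--Walfisz-type bound in the variable carrying the non-principal twist, with $r$ replaced by $rn$, and finish with $d(rn)\le d(r)d(n)$ and a trivial sum over the other variable. In that last sum $\mu^2(n)d(n)4^{-\omega(n)}=2^{-\omega(n)}\le 1$, so you do not even need the factor $\log X$.

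The step you must repair is your handling of the condition $m\equiv 1\pmod 4$. The statement imposes no such condition: $m$ and $n$ are arbitrary square-free integers in the given ranges, not products of primes $\equiv 1\pmod 4$. So your claim that the condition is implicit is false. Lemma~\ref{lemma6} as stated in the paper only sums over $n\equiv 1\pmod 4$, so it does not cover your $m$-sum. What you need is the unrestricted estimate $\sum_{m\le y,\,(m,rn)=1}\mu^2(m)4^{-\omega(m)}\chi(m)\ll y\,d(rn)\exp(-c\sqrt{\log y})$ for non-principal $\chi\pmod{24}$, i.e.\ Heath-Brown's original Lemma 6.

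This is not cosmetic. If the variables really were restricted to $m,n\equiv 1\pmod 4$, non-principality modulo $24$ would no longer force cancellation. Let $\chi_{-4}$ be the character modulo $24$ induced from modulus $4$, and take $\chi_1=\chi_2\chi_{-4}$. Then $\chi_1(m)\chi_2(n)=\chi_2(mn)=\chi_2(h)$ for all $m\equiv 1\pmod 4$. The whole sum is therefore $\pm$ a sum of positive terms, and it is $\gg X(\log X)^{-3/2}$ when $h\equiv 1\pmod 4$, $r=1$ and $M=N=C\asymp\sqrt X$; the claimed bound would be false. In your argument this appears at $\psi=\chi_2$: there $\chi_2\psi$ is principal, and the supposedly cancelling twist $\chi_1\psi=\chi_{-4}$ is identically $1$ on $m\equiv 1\pmod 4$. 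For the same reason, the restricted form of Lemma~\ref{lemma6} is only correct when $\chi\chi_{-4}$ is non-principal as well. Once you use the unrestricted estimate on the unrestricted $m$-sum, your proof goes through as written.
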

\noindent The proof of Lemma 10 in \cite{HeathBrown}, uses character modulo $8$. However one can check that the same proof also works for character modulo $24$.
\begin{lemma}\label{characterlemma}
Let $X>0$ and $M,N\geq C>0$ be given. Then for an arbitrary positive integer $r$, any integer h such that $(h,24)=1$, and for any non-principal character $\chi \pmod{24}$, we have
$$\sum\limits_{m,n}\mu^{2}(m)\mu^{2}(n)4^{-\omega(m)-\omega(n)}\chi(m)\chi(n)\ll d(r)X\exp(-c\sqrt{\log C})\log X,$$
for some positive absolute constant $c$, where the sum is over coprime variables satisfying the conditions
$$M<m\leq 2M,\quad N<n\leq 2N,\quad mn\leq X,\quad mn\equiv h\pmod{24},\quad (mn,r)=1.$$
\end{lemma}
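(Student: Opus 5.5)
The plan is to reduce the statement to one-variable sums twisted by non-principal characters, in the same way as the proof of Lemma \ref{lemma10} (Lemma 10 of \cite{HeathBrown}), with Lemma \ref{lemma6} as the analytic input. I should say at the outset that I cannot see how to carry out one step — the principal term in the second paragraph — for a fixed $h$, so the plan below is incomplete at that point.

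\emph{Step 1: detect the congruence.} First I remove the congruence $mn\equiv h\pmod{24}$ by orthogonality. I insert
$$\frac{1}{8}\sum_{\psi \,(\mathrm{mod}\, 24)}\psi(m)\psi(n)\overline{\psi(h)}$$
into the sum. This writes it as $\frac18\sum_{\psi}\overline{\psi(h)}\,T(\chi\psi)$, where
$$T(\phi)=\sum_{m,n}\mu^2(m)\mu^2(n)4^{-\omega(m)-\omega(n)}\phi(m)\phi(n).$$
The sum in $T(\phi)$ runs over coprime $m,n$ with $M<m\leq 2M$, $N<n\leq 2N$, $mn\leq X$ and $(mn,r)=1$. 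Every character modulo $24$ is real, since $(\Z/24\Z)^\times\cong(\Z/2\Z)^3$. Hence $\chi\psi$ is non-principal for the seven characters $\psi\neq\chi$, and principal exactly when $\psi=\chi$.

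\emph{Step 2: the seven non-principal terms.} Take $\phi=\chi\psi$ non-principal. Without loss of generality $M\geq N$. I fix $n$ and sum over $m$. The coprimality condition $(m,n)=1$ is absorbed into the condition $(m,rn)=1$. The range $M<m\leq\min(2M,X/n)$ is a difference of two initial segments. Applying Lemma \ref{lemma6} with $q=24$ to each segment gives
$$\sum_m\ll M\,d(rn)\exp\bigl(-c\sqrt{\log M}\bigr)\leq M\,d(r)\,d(n)\exp\bigl(-c\sqrt{\log C}\bigr).$$
Summing over $n$, the bound $\sum_{n\leq 2N}4^{-\omega(n)}d(n)\ll N$ holds because $4^{-\omega(n)}d(n)=2^{-\omega(n)}\leq 1$ for square-free $n$. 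Since $MN\ll X$ in the non-empty ranges, these seven terms together contribute $\ll d(r)X\exp(-c\sqrt{\log C})$. The extra factor $\log X$ in the statement gives room for the dyadic truncation $mn\leq X$ if one prefers to handle it by splitting ranges.

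\emph{Step 3: the principal term $\psi=\chi$ (the obstacle).} This term is $\frac18\overline{\chi(h)}\,T(\chi^2)=\frac18\chi(h)\,T(\chi_0)$. Equivalently, before expanding at all, $\chi(m)\chi(n)=\chi(mn)=\chi(h)$ on the support of the original sum. So the sum is literally $\chi(h)$ times the untwisted sum, and $|\chi(h)|=1$. For the statement to hold as worded, one must therefore show that the untwisted sum over $mn\equiv h$ is itself $\ll d(r)X\exp(-c\sqrt{\log C})\log X$. The heuristic size of that sum is about $MN(\log M\log N)^{-3/4}$, so no argument of Step 2 type can produce the required saving. I do not see how to control this term for a fixed $h$, and I would not assert the statement without first resolving it.

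The first thing I would try is to return to the derivation of (\ref{eq:7}) and check whether, for the relevant index patterns, the character attached to the two coupled variables is genuinely the same non-principal $\chi$. If in fact the twist is trivial there, this case belongs with the main terms treated in Section 5, and the lemma is needed only for distinct characters, which is Lemma \ref{lemma10}. Failing that, I would check numerically, for a sample of $h$, $M$, $N$, whether the untwisted sum is small. I expect it is not, and in that case the statement would need to be corrected rather than proved.
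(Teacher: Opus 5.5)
Your Step 3 is not a gap in your argument. It is a disproof of the statement, and you were right not to assert it. Since $\chi$ is completely multiplicative and $mn\equiv h\pmod{24}$ on the support, $\chi(m)\chi(n)=\chi(h)$. So the sum is $\chi(h)$ times a sum of non-negative terms.

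Your ``I expect it is not'' can be made rigorous. Take $r=1$, $h=1$, $M=N=C$ and $X=4C^{2}$. Then $mn\le X$ holds automatically. Restrict to ordered pairs of distinct primes $m,n\equiv 1\pmod{24}$ in $(C,2C]$, each contributing $4^{-2}$. By the prime number theorem for progressions this gives a lower bound $\gg C^{2}(\log C)^{-2}\gg X(\log X)^{-2}$. The right-hand side is $X\exp\bigl(-c\sqrt{\tfrac12\log(X/4)}\bigr)\log X=o\bigl(X(\log X)^{-2}\bigr)$. So the bound fails for every $c>0$ and every implied constant. The example also has $m,n\equiv 1\pmod 4$, so the extra restrictions in force where the paper applies the lemma do not rescue it.

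The paper's justification (``straightforward using Lemma 6 of Heath-Brown'') breaks exactly where you found. After detecting $mn\equiv h$ with the real characters $\psi$ modulo $24$, the term $\psi=\chi$ is untwisted in both variables, and Lemma \ref{lemma6} says nothing about it. In Lemma \ref{lemma10}, the hypothesis $\chi_1\neq\chi_2$ is precisely what guarantees that one of $\chi_1\psi,\chi_2\psi$ is non-principal for every $\psi$. Your Steps 1--2 are fine for the seven other $\psi$.

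Your proposed repair at the end is the right one, with one adjustment: the twist does not have to be trivial, only constant. In (\ref{eq:7}) the characters come solely from $\bigl(\frac{3}{\beta}\bigr)$ with $\beta=n_{13}n_{14}n_{23}n_{24}$, since the factor $\bigl(\frac{-1}{\alpha}\bigr)$ is $1$. Also $\bigl(\frac{3}{m}\bigr)=\bigl(\frac{m}{3}\bigr)$ for $m\equiv 1\pmod 4$. The 24 types of Lemma \ref{lemma9} then split as follows:
\begin{itemize}
\item The nine listed types are untwisted.
\item Fourteen types contain both a twisted and an untwisted variable, and Lemma \ref{lemma10} disposes of them.
\item The remaining type $13,14,23,24$ is the only one for which Lemma \ref{characterlemma} is actually needed.
\end{itemize}
In that last type $\beta=n$, so $S(\A)=\bigl(\frac{h}{3}\bigr)\sum 4^{-\omega(n)}$. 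The argument of Section \ref{sec:6} then gives a contribution $\bigl(\frac{h}{3}\bigr)\#S(X,h)$ plus an admissible error, not $O(X(\log X)^{-17})$.

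With the rest of the paper's analysis unchanged, the main term of $\sum_{n\in S(X,h)}\sum_{\n}g(\n)$ is therefore $\bigl(9+\bigl(\frac{h}{3}\bigr)\bigr)\#S(X,h)$, not $9\,\#S(X,h)$. That is $8\,\#S(X,5)$ and $10\,\#S(X,13)$. As a quick check, for $n=p$ prime the sixteen placements give $\sum_{\n}g(\n)=3+\bigl(\frac{p}{3}\bigr)$, so the twisted placements visibly do not cancel.
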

\noindent The proof is straightforward using Lemma 6, in \cite{HeathBrown}.\\

\begin{center}
$10,20,30,40$\\
$30,40,34,43$\\
$30,31,32,34$\\
$40,41,42,43$\\
$31,32,41,42$\\
$12,21,34,43$\\
$10,21,31,41$\\
$20,12,32,42$\\
$10,20,12,21$\\
\end{center}

\noindent Applying Lemma \ref{lemma10} and Lemma
\ref{characterlemma}, we get that the sums $S(\A)$ except for the above mentioned indices are all $O(X(\log X)^{-17})$, since one can take the constant $\kappa$ in Lemma \ref{lemma7} sufficiently large. Therefore the total contribution of these sums is $O(X(\log X)^{-1})$, which is satisfactory for us. Therefore we deduce as follows.
\begin{lemma}\label{lemma11}
$$\sum\limits_{\A}S(\A)\ll X(\log X)^{-\frac{5}{8}}(\log \log X)^{8},$$
where $\A$ denotes all sets other than those corresponding to the indices
\begin{center}
$10,20,30,40;\quad 30,40,34,43;\quad 30,31,32,34;\quad 40,41,42,43;\quad 10,20,12,21;$
\end{center}
\begin{center}
$31,32,41,42;\quad 12,21,34,43;\quad 10,21,31,41;\quad 20,12,32,42$.
\end{center}
\end{lemma}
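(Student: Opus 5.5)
Lemma \ref{lemma8} already bounds every $\A$ not listed in Lemma \ref{lemma9}. So it remains to treat the $24$ configurations of Lemma \ref{lemma9} and show that all except the nine listed ones contribute $O(X(\log X)^{-17})$ each. Summing over the $O(\log^{4}X)$ choices of the four dyadic ranges $N_1,\dots,N_4$ (the other variables are $1$) then gives $O(X(\log X)^{-1})$, which is absorbed into $X(\log X)^{-5/8}(\log\log X)^8$.

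For each configuration I would first write $S(\A)$ in the form (\ref{eq:7}) with $P=1$ and $\left(\frac{-1}{\alpha}\right)=1$. The key step is to compute the characters $\chi_1,\dots,\chi_4$ explicitly. They come only from $\left(\frac{3}{\beta}\right)$ with $\beta=n_{13}n_{14}n_{23}n_{24}$, together with any residual reciprocity factors. Since every $n_{ij}\equiv 1\pmod 4$, these residual factors are trivial, so $\left(\frac{3}{n}\right)=\left(\frac{n}{3}\right)$. Hence each $\chi_i$ is either trivial or the character $\chi_{-3}$ (equivalently the character mod $24$ induced from mod $3$), according as the corresponding index lies outside or inside $\{13,14,23,24\}$. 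The coupling condition is $n_1n_2n_3n_4\equiv h\pmod{24}$, together with $\prod n_i\le X$ and pairwise coprimality.

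Next I would split by the pattern of characters. If two of the four variables carry distinct characters (one trivial, one $\chi_{-3}$), fix the other two variables and apply Lemma \ref{lemma10} to that pair, with $r$ the product of the fixed variables and $h'$ determined by them. This gives $\ll d(r)X'\exp(-c\sqrt{\log C})\log X$. Summing over the fixed variables with $\sum d(n)\ll N\log N$, exactly as in the proof of Lemma \ref{lemma7}, yields $X(\log X)^{O(1)}\exp(-c\sqrt{\kappa}\log\log X)\ll X(\log X)^{-17}$ once $\kappa$ is large. If all four variables carry $\chi_{-3}$, pair two of them and use Lemma \ref{characterlemma} in the same way. The surviving configurations are those in which all four characters are principal, i.e. none of the four indices lies in $\{13,14,23,24\}$; there the sum is a genuine main term. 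The bookkeeping step is then to run through the six families of Lemma \ref{lemma9}, with $i,j,k,l$ ranging over permutations, and verify that exactly the nine index sets listed avoid $\{13,14,23,24\}$. For example, $10,20,30,40$ survives, while $i0,ij,ik,il$ survives only for $i=3,4$.

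The main obstacle is checking that the hypotheses of Lemmas \ref{lemma10} and \ref{characterlemma} really apply. The difficulty is the congruence condition: after fixing two variables one needs $mn\equiv h'\pmod{24}$ with $(h',24)=1$, and $mn\le X'$. One also has to handle the edge case where all four characters equal $\chi_{-3}$, where the product character is principal. There I would first pick out the condition mod $24$ by orthogonality and rewrite the sum as a combination of sums $\sum\chi(m)\chi(n)$ with $\chi$ non-principal, as in Lemma \ref{characterlemma}. I would also double-check the reciprocity signs, to confirm that no hidden $\chi_{-4}$ factors survive, which is guaranteed by the restriction $p\equiv 1\pmod 4$.
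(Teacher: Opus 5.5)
Your reduction and bookkeeping agree with the paper. Lemma \ref{lemma8} handles everything outside Lemma \ref{lemma9}, the only surviving character is $\left(\frac{3}{\beta}\right)$, each $\chi_i$ equals $\left(\frac{\cdot}{3}\right)$ exactly on the indices in $\{13,14,23,24\}$, and the nine listed configurations are precisely those avoiding this set. The gap is the one configuration whose four indices all lie in $\{13,14,23,24\}$, namely $13,14,23,24$ itself (the family $ij,ik,lj,lk$ with $i=1$, $l=2$, $j=3$, $k=4$).

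In that configuration:
\begin{itemize}
\item the Jacobi symbols between joined pairs cancel by reciprocity;
\item $\left(\frac{-1}{\alpha}\right)=1$;
\item $\left(\frac{3}{\beta}\right)=\left(\frac{n}{3}\right)=\left(\frac{h}{3}\right)$, because $\beta=n$, $n\equiv 1 \pmod 4$ and $n\equiv h\pmod{24}$.
\end{itemize}
Hence $g(\n)=\left(\frac{h}{3}\right)4^{-\omega(n)}$ identically on every box of this type, and there is no oscillation to exploit. You noticed that the product character is principal here, but the orthogonality step cannot create cancellation. In $\frac18\sum_{\psi}\overline{\psi}(h)\prod_i\psi(n_i)\left(\frac{n_i}{3}\right)$, the term $\psi=\left(\frac{\cdot}{3}\right)$ turns every character into the principal one and reproduces the whole sum. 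For the same reason Lemma \ref{characterlemma} is false as stated: on its range of summation $\chi(m)\chi(n)=\chi(mn)=\chi(h)$, so its left-hand side is $\chi(h)$ times a sum of non-negative terms.

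So this configuration is a main term, not an error term. Summing over its boxes and removing the conditions $n_{ij}>C$ exactly as in (\ref{eq:10}) gives $\left(\frac{h}{3}\right)\#\,S(X,h)$ up to an admissible error. But $\#\,S(X,h)\asymp X(\log X)^{-1/2}$, which is larger than $X(\log X)^{-5/8}(\log\log X)^{8}$. The other non-listed configurations really are small: mixed character patterns via Lemma \ref{lemma10}, the rest via Lemma \ref{lemma8}. Hence the left-hand side of the lemma is $\left(\frac{h}{3}\right)\#\,S(X,h)(1+o(1))$, and the statement as written is false.

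The paper's own proof fails at the same point, since it also invokes Lemma \ref{characterlemma} for this configuration. The correct survival criterion, as in Heath-Brown, is that all four characters coincide, not that they are all principal. That criterion gives ten exceptional configurations. Carried through Section \ref{sec:6}, it would replace the constant $9$ in Theorem \ref{maintheorem} by $9+\left(\frac{h}{3}\right)$, that is $8$ for $h=5$ and $10$ for $h=13$.
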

\begin{remark}
 This statement differs from the corresponding result in \cite{HeathBrown}, as we have $9$ exceptional cases instead of $4$ exceptional cases in \cite{HeathBrown}.    
\end{remark}
\section{The main terms}\label{sec:6}
\noindent Initially we observe that the sums corresponding to the indices $10,20,30,40;\quad 30,40,34,43;\quad 30,31,32,34;\quad 40,41,42,43;$ $31,32,41,42;\quad 10,20,12,21;\quad 12,21,34,43;\quad 10,21,31,41;\quad 20,12,32,42;\quad$  the function $g(\n)$ becomes to $4^{-\omega(n)}$, where $n$ is the product of the variables $n_{ij}$. The contribution of all sums with indices $10,20,30,40$ and $A_{i0}\geq C$, is therefore 
$$\sum\limits_{n_{i0}}4^{-\omega(n)}$$
where the sum is restricted to the conditions
\begin{center}
$n_{i0}>C,\quad n\leq X,\quad n\equiv h\pmod{24}$, $n$ square-free, every prime divisor of $n$ is congruent to $1$ modulo $4$.
\end{center}
We can remove the condition $n_{i0}>C$ with an error
\begin{align*}
&\ll \sum\limits_{abcd\leq X,a\leq C}\mu^{2}(abcd)4^{-\omega(a)-\omega(b)-\omega(c)-\omega(d)}\\
&=\sum\limits_{ae\leq X,a\leq X}\mu^{2}(ae)4^{-\omega(a)}(\frac{3}{4})^{\omega(e)}\\
&\ll \sum\limits_{a\leq C}4^{-\omega(a)}\sum\limits_{e\leq \frac{X}{a}}(\frac{3}{4})^{\omega(e)}
\end{align*}
The inner sum is restricted to the conditions $e\leq \frac{X}{a}$,
$e$ is product of primes congruent to $1$ modulo $4$. Therefore by applying (\ref{eq:6}) we obtain

$$\sum\limits_{a\leq C}4^{-\omega(a)}\sum\limits_{e\leq \frac{X}{a}}(\frac{3}{4})^{\omega(e)} \ll X(\log X)^{-\frac{5}{8}}\sum\limits_{a\leq C}4^{-\omega(a)}a^{-1} \ll X(\log X)^{-\frac{1}{4}}(\log \log X)^{2}$$
Since $n$ is square-free it factorizes as $n_{10}n_{20}n_{30}n_{40}$, where $n_{i0}$ are pairwise coprime in exactly $4^{\omega(n)}$ different ways. We therefore get
\begin{equation}\label{eq:10}
\sum\limits_{n_{i0}>C}4^{-\omega(n)}=\#\:S(X,h)+O(X(\log X)^{-\frac{5}{8}}(\log \log X)^{2}).
\end{equation}

 \noindent Therefore Theorem \ref{maintheorem} follows.

\section{Applications}\label{applications}
\noindent In this section, we collect some interesting applications of the Theorem \ref{maintheorem}. In the congruent number problem case, Monsky has proved the parity of $s(n)$ according to the congruence class modulo $8$. In the case of $\frac{\pi}{3}$-congruent number scenario, Wei-Guo have proved the following theorem:
\begin{theorem}\label{Theorem 1}(Theorem 1.3, \cite{WeiGuo})
n is a square-free, positive integer. Then we have \\
$s(n)$ is even if $n\equiv 1, 2, 3, 5, 7, 9, 14, 15, 19 \pmod{24}$, $s(n)$ is odd if $n\equiv 6, 10, 11, 13, 17, 18, 21, 22, 23 \pmod{24}$.
\end{theorem}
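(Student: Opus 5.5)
Rather than working with the descent systems (\ref{eq:2}) directly, I would use the $2$-parity theorem. For an elliptic curve over $\Q$, the Dokchitser brothers (building on Monsky for curves with full rational $2$-torsion) show that $\dim_{\mathbb{F}_2} Sel_2(E/\Q)-\dim_{\mathbb{F}_2}E(\Q)[2]$ has the parity predicted by the global root number. In our normalisation this means
$$(-1)^{s(n)}=w(E_{n,\frac{\pi}{3}}).$$
So the statement reduces to computing the root number $w(E_{n,\frac{\pi}{3}})=-\prod_{p} w_p(E_{n,\frac{\pi}{3}})$ as a function of $n\bmod 24$. Since $E_{n,\frac{\pi}{3}}$ has full rational $2$-torsion, Monsky's matrix description of the $2$-Selmer group is an alternative route. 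With the local conditions already recorded before Lemma~\ref{Q_2localcondn}, that description gives the parity as the rank parity of an explicit $\mathbb{F}_2$-matrix. I would keep it as a cross-check.

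The key observation is that $E_{n,\frac{\pi}{3}}$ is the quadratic twist by $n$ of $E_1:y^2=x(x+3)(x-1)$. The discriminant of $E_1$ is $2^8\cdot3^2$, and its conductor is $24$. For each prime $p\nmid 6n$ the curve has good reduction and $w_p=1$. For $p\mid n$ with $p\ge5$ the twist has additive, potentially good reduction, and the local root number is $w_p=\left(\frac{-1}{p}\right)$. Hence the contribution of the odd primes of $n$ prime to $3$ is a Jacobi-symbol factor depending only on $n$ modulo $4$. Together with $w_\infty=-1$, everything then reduces to the local root numbers $w_2$ and $w_3$ of the twist by $n$. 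These depend only on the class of $n$ in $\Q_2^\times/\Q_2^{\times2}$ and in $\Q_3^\times/\Q_3^{\times2}$. For a square-free positive $n$ those classes are determined by $n\bmod 24$, after separating the cases $2\mid n$ and $3\mid n$ and recording the cofactor mod $8$ or mod $3$. This explains why the answer is a function of $n\bmod 24$.

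The main obstacle is computing $w_2$ and $w_3$ in all the needed cases. At $2$ the reduction of the twist is additive, and when $n\equiv 3\pmod 4$ or $2\mid n$ the twisting character is ramified at $2$. In those cases the simple formula $w(E^{\chi})=w(E)\chi(-N)$ for twists coprime to the conductor does not apply, and a quick check with it gives the wrong parity for $n\equiv7\pmod{24}$. My plan is to first compute $w_2$ and $w_3$ of $E_1$ itself from Tate's algorithm together with Halberstadt's tables for $p=2,3$, or Rizzo's tables. I would then compute the twist for each of the eight classes of $\Q_2^\times/\Q_2^{\times2}$ and the four classes of $\Q_3^\times/\Q_3^{\times2}$ with the same tables, or with Rohrlich's formula where the reduction is potentially good.

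Multiplying the local factors and comparing with the lists of residues mod $24$ in the statement then gives the parity claim. As a sanity check I would verify a few values, for instance $n=5,13,11,7$, against the Monsky matrix computation, or against direct $2$-descent using Lemmas~\ref{Q_2localcondn} and~\ref{Q_3localcondn}.
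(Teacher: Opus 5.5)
The paper gives no proof of this statement; it imports Theorem 1.3 of Wei--Guo. Your root-number route is therefore genuinely different from anything in the text, and its skeleton is sound. The identity $(-1)^{s(n)}=w(E_{n,\frac{\pi}{3}})$ is the $2$-parity theorem over $\Q$ (Monsky; Dokchitser--Dokchitser), and $E_{n,\frac{\pi}{3}}$ is indeed the twist by $n$ of $E_1=24a1$. Your local inputs are also correct: $w_\infty=-1$, $w_p=1$ for $p\nmid 6n$, and $w_p=\left(\frac{-1}{p}\right)$ for $p\mid n$, $p\ge5$. The approach explains conceptually why the answer is a congruence condition. The price is the deep parity theorem plus the local analysis at $2$ and $3$, which carries all the content and which you defer to tables.

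There is a false step, and it is exactly the one that should make the answer depend only on $n \bmod 24$: the local square classes of $n$ are \emph{not} determined by $n\bmod 24$. If $n=2m$, then $n\bmod 24$ fixes $m$ only modulo $12$, not modulo $8$. For example, $n=2$ and $n=26$ have $2$-adic classes $2$ and $10$. If $n=3m$, then $n\bmod 24$ says nothing about $m\bmod 3$, as $n=3$ and $n=51$ show. So even after tabulating all eight $2$-adic and four $3$-adic values, your comparison step is unjustified until you show the root number cannot see this extra information. Two facts close the gap.

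At $3$: for $3\nmid n$ the reduction is multiplicative and split iff $-n$ is a square mod $3$, so $w_3=\left(\frac{n}{3}\right)$. For $3\mid n$ it is additive and potentially multiplicative, so $w_3=\left(\frac{-1}{3}\right)=-1$ for both ramified classes (Rohrlich).

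At $2$: twisting by the unramified quadratic character $\eta$ gives $w_2(E\otimes\eta)=(-1)^{f_2(E)}w_2(E)$, where $f_2$ is the conductor exponent. Writing $E_d:y^2=x(x+3d)(x-d)$, Tate's algorithm shows that $E_{\pm2}$ have type $I_4^*$ with $v_2(\Delta)=14$, hence $f_2=6$. Therefore $w_2(E_{2u})=w_2(E_{10u})$.

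With these facts the values can be pinned down without tables. One has $w(E_1)=+1$ ($24a1$, $f_2=3$) and $w(E_{-1})=+1$ (conductor $48$, $f_2=4$). A short descent gives $\dim Sel_2(E_{\pm2}/\Q)=2$, so $w(E_{\pm 2})=+1$. Together with the unramified-twist rule this yields:
\begin{itemize}
\item $w_2(E_n)=-1$ for $n\in\mathbb{Q}_2^{\times2}$;
\item $w_2(E_n)=+1$ for the classes $3,5,7$;
\item $w_2(E_n)=\left(\frac{-1}{u}\right)$ for $n=2u$.
\end{itemize}
The product $-w_2w_3\prod_{p\mid n,\,p\ge5}\left(\frac{-1}{p}\right)$ then reproduces all eighteen residues in the statement.

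Incidentally, your aside about $n\equiv7\pmod{24}$ is mistaken. The formula $w(E_1)\left(\frac{-24}{7}\right)=+1$ is unjustified there, but it does give the correct even parity.
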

\noindent When $n\equiv 1,5,7,19 \pmod{24}$, $s(n)$ is even by Theorem \ref{Theorem 1}. So that
$$s(n)\leq \frac{1}{2}2^{s(n)}$$
Therefore we deduce the following average bounds for $s(n)$.
\begin{corollary}
Then for $h=5$ we have
$$\sum\limits_{n\in S(X,h)}s(n)\leq \frac{9}{2}\#\, S(X,h)+ O(X(\log X)^{-\frac{5}{8}}(\log \log X)^{8}).$$
\end{corollary}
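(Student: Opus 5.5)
The plan is to combine the parity result of Theorem \ref{Theorem 1} with the asymptotic formula of Theorem \ref{maintheorem}. For $h=5$, every $n\in S(X,h)$ satisfies $n\equiv 5\pmod{24}$, so Theorem \ref{Theorem 1} gives that $s(n)$ is even.

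The first step is an elementary inequality: for every even integer $s\ge 0$,
$$s\le \tfrac12\, 2^{s}.$$
We check it directly for small values and then argue by induction.
\begin{itemize}
\item For $s=0$ we have $0\le \tfrac12$, and for $s=2$ we have $2\le 2$.
\item Passing from $s$ to $s+2$, the left side increases by $2$ while the right side is multiplied by $4$.
\item Since $\tfrac12 2^{s}\ge 1$ for even $s\ge 2$, the right side increases by $\tfrac32 2^{s}\ge 3>2$.
\end{itemize}
(The inequality fails for odd $s=1$, since $1>\tfrac12\cdot 2$ is false only narrowly, as $1\le 1$; but for $s=3$ we get $3\le 4$. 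In any case, only even $s$ is needed.)

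The second step applies this pointwise to each $n\in S(X,5)$ and sums over $n$:
$$\sum_{n\in S(X,5)} s(n)\le \tfrac12\sum_{n\in S(X,5)}2^{s(n)}.$$
We then substitute Theorem \ref{maintheorem} with $h=5$. This gives
$$\tfrac12\left(9\,\#S(X,5)+O\bigl(X(\log X)^{-5/8}(\log\log X)^{8}\bigr)\right)=\tfrac92\,\#S(X,5)+O\bigl(X(\log X)^{-5/8}(\log\log X)^{8}\bigr),$$
where the factor $\tfrac12$ is absorbed into the implied constant of the error term.

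There is no real obstacle in this argument. The only point needing care is that the parity input applies to all of $S(X,5)$: every element of $S(X,5)$ is a positive square-free integer, so the hypotheses of Theorem \ref{Theorem 1} hold. The argument would not work for $h=13$, since there $s(n)$ is odd.
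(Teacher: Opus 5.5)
Your proposal is correct and follows the paper's argument exactly: Theorem \ref{Theorem 1} makes $s(n)$ even for $n\equiv 5\pmod{24}$, which gives $s(n)\le \frac12 2^{s(n)}$; summing over $n$ and invoking Theorem \ref{maintheorem} then yields the bound. One correction to your side remarks: $s\le 2^{s-1}$ holds for every integer $s\ge 0$, including $s=1$ where it is an equality, so your parenthetical contradicts itself and your claim that the argument fails for $h=13$ is wrong. Parity is not needed for this particular bound; it becomes essential only for the sharper $\frac{10}{3}$ bound and for the density corollaries.
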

\noindent This has an interesting consequence for the density of $s(n)=0$ or $2$.
\begin{corollary}\label{s(n)=0or2}
The density of $s(n)=0$ or $s(n)=2$ among all square-free $n\equiv 5 \pmod{24}$ having all prime divisors congruent to $1$ modulo $4$ is at least $\frac{7}{16}$.
\end{corollary}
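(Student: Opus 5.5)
Since $5\equiv 5 \pmod{24}$, Theorem \ref{Theorem 1} says $s(n)$ is even for every $n\in S(X,5)$. So $s(n)\in\{0,2,4,6,\dots\}$, and the approach is a Markov/Chebyshev-type counting argument fed by the first-moment bound of Theorem \ref{maintheorem}. Let $\delta_X$ be the proportion of $n\in S(X,5)$ with $s(n)\in\{0,2\}$. Every remaining $n$ has $s(n)\ge 4$, hence $2^{s(n)}\ge 16$. For the good $n$ I only use the lower bound $2^{s(n)}\ge 1$. A sharper bound would need the finer distribution; the crude one costs density, but let me see what it yields.

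Summing these lower bounds and comparing with Theorem \ref{maintheorem} at $h=5$ gives
$$\delta_X\cdot 1+(1-\delta_X)\cdot 16 \le \frac{1}{\#S(X,5)}\sum_{n\in S(X,5)}2^{s(n)} = 9+o(1).$$
The $o(1)$ needs $\#S(X,5)\gg X(\log X)^{-3/4}$. This holds because $n$ ranges over square-free integers built from primes $\equiv 1\pmod 4$ (density of primes $1/2$), so $\#S(X,h)\asymp X(\log X)^{-1/2}$ by a standard Landau–Selberg–Delange argument. Since $X(\log X)^{-5/8}(\log\log X)^8=o(X(\log X)^{-1/2})$, the error term is negligible. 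Rearranging, $16-15\delta_X\le 9+o(1)$, so $\delta_X\ge \tfrac{7}{15}-o(1)\ge\tfrac{7}{16}$ for large $X$.

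In fact $\tfrac{7}{16}$ comes out cleanly if the crude bound is written as $2^{s(n)}\ge 16\cdot\mathbf 1_{s(n)\ge 4}$. This gives $16(1-\delta_X)\le 9+o(1)$, i.e. $1-\delta_X\le \tfrac{9}{16}$ and $\delta_X\ge\tfrac{7}{16}$. That matches the stated constant exactly, so I would present this one-line version and note that the $\tfrac{7}{15}$ refinement is also available. The liminf of $\delta_X$ is then at least $\tfrac{7}{16}$, which is the density statement.

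The only step that needs care is justifying that the error term in Theorem \ref{maintheorem} is $o(\#S(X,5))$. This requires the lower-order growth $\#S(X,5)\gg X(\log X)^{-1/2}$, including that the congruence $n\equiv 5\pmod{24}$ is hit with positive proportion. I would get that from the Selberg–Delange method applied to $\mu^2(n)\mathbf 1_{p\mid n\Rightarrow p\equiv1(4)}$ twisted by characters mod $24$. The principal character gives the main term, and the non-principal ones give a lower power of $\log$ (as in Lemma \ref{lemma6}-type estimates). The rest of the argument is elementary.
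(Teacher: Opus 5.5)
Your proposal is correct and follows the paper's argument. The Wei--Guo parity result forces $2^{s(n)}\ge 16$ whenever $s(n)\notin\{0,2\}$, and comparing with the first-moment asymptotic of Theorem~\ref{maintheorem} at $h=5$ gives $16(1-\delta_X)\le 9+o(1)$; the paper keeps the weights $\#S(0)+4\#S(2)+16\#S(>2)$, which, as you observe, would even give $7/15$. Your version is slightly more careful than the paper's, which takes limits without checking they exist and does not verify that the error term is $o(\#S(X,5))$; your $\liminf$ formulation and the Selberg--Delange estimate $\#S(X,5)\asymp X(\log X)^{-1/2}$ supply exactly these missing points.
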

\begin{proof}
We define $S(0)=\left\{n\in S(X,h):s(n)=0\right\}$, $S(2)=\left\{n\in S(X,h):s(n)=2\right\}$ and $S(>2)=\left\{n\in S(X,h):s(n)>2\right\}$. By Theorem \ref{Theorem 1}, we know that $s(n)$ is even for $n\equiv 1,5,7,19 \pmod{24}$. Now by using Corollary \ref{averagerank}, we get,
$$\#\:S(0)+4\:\#\:S(2)+16\:\#\:S(>2)\leq 9\#\, S(X,h)+ O(X(\log X)^{-\frac{5}{8}}(\log \log X)^{8}).$$
Dividing by $\#\:S(X,h)$ in the both sides and letting $X\to\infty$ we get,
$$\lim_{X\to\infty}\frac{\#\:S(0)}{\#\:S(X,h)}+\lim_{X\to\infty}4\frac{\#\:S(2)}{\#\:S(X,h)}+\lim_{X\to\infty}16\frac{\#\:S(>2)}{\#\:S(X,h)}\leq 9.$$
On the other hand, we have
$$\lim_{X\to\infty}\frac{\#\:S(0)}{\#\:S(X,h)}+\lim_{X\to\infty}\frac{\#\:S(2)}{\#\:S(X,h)}+\lim_{X\to\infty}\frac{\#\:S(>2)}{\#\:S(X,h)}=1.$$
From the above expressions, we therefore deduce that
$$\lim_{X\to\infty}\frac{\#\:S(0)}{\#\:S(X,h)}+\lim_{X\to\infty}\frac{\#\:S(2)}{\#\:S(X,h)}\geq \frac{7}{16}.$$
Hence the corollary follows.
\end{proof}
\begin{remark}
We would like to emphasize that the positive density of $s(n)=0$ or $s(n)=2$ among of all square-free $n\equiv 13\pmod{24}$ having all prime divisors congruent to $1$ modulo $4$ is an unconditional result. 
\end{remark}

\begin{corollary}
Then for $h=5$ we have
$$\sum\limits_{n\in S(X,h)}r(n)\leq \frac{9}{2}\#\, S(X,h)+ O(X(\log X)^{-\frac{5}{8}}(\log \log X)^{8}).$$
\end{corollary}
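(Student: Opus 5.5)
The plan is to deduce this from Corollary \ref{averagerank} together with the elementary inequality between $r(n)$ and $2^{r(n)}$. Since $r(n)\le s(n)$, the shortest route is to quote the preceding corollary: $\sum_{n\in S(X,5)} s(n)\le \frac92\#S(X,5)+O(\cdots)$, and summing $r(n)\le s(n)$ over $n\in S(X,5)$ gives the claim at once.

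For a self-contained argument that avoids the parity input, I would instead use the pointwise bound
$$r\le \tfrac12 2^{r}\quad\text{for every integer } r\ge 0,\ r\ne 1.$$
This holds at $r=0$ (since $0\le \tfrac12$), at $r=2$ with equality, and for $r\ge 3$ by induction. The case $r=1$ fails, since $1>1$ is false, so the exceptional value $r(n)=1$ must be ruled out. For $n\equiv 5\pmod{24}$, Theorem \ref{Theorem 1} says $s(n)$ is even. Moreover $r(n)\equiv s(n)\pmod 2$ whenever the Tate--Shafarevich group has finite $2$-primary part, but we do not want to assume that. The clean way around it is not to work with $r(n)$ and $2^{r(n)}$ directly: apply the inequality to $s(n)$, which is even, and then use $r(n)\le s(n)$. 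This yields
$$\sum_{n\in S(X,5)} r(n)\le \sum_{n\in S(X,5)} s(n)\le \frac12\sum_{n\in S(X,5)} 2^{s(n)}.$$

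Finally, Theorem \ref{maintheorem} with $h=5$ gives $\frac12\sum 2^{s(n)}=\frac92\#S(X,5)+O(X(\log X)^{-5/8}(\log\log X)^8)$, which completes the proof.

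The only point needing care is that the intermediate inequality goes through $s(n)$ rather than $r(n)$. Bounding $r(n)$ via $2^{r(n)}$ and Corollary \ref{averagerank} would fail when $r(n)=1$, because parity of $r(n)$ is not known unconditionally. With that detour, no step presents a real obstacle.
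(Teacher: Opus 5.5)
Your argument is correct and is the paper's route: the Wei--Guo parity theorem makes $s(n)$ even for $n\equiv 5\pmod{24}$, so $s(n)\le \frac12 2^{s(n)}$; Theorem~\ref{maintheorem} then bounds $\sum s(n)$ by $\frac92\#S(X,5)+O(X(\log X)^{-5/8}(\log\log X)^{8})$; and $r(n)\le s(n)$ transfers this bound to $r(n)$. Your opening sentence about combining Corollary~\ref{averagerank} with a bound of $r(n)$ in terms of $2^{r(n)}$ is contradicted by your own correct remark that this fails at $r(n)=1$, so state the proof only in the form that goes through $s(n)$.
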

\begin{remark}
Assuming that $r(n)$ and $s(n)$ always have the same parity, we can deduce that the density of $r(n)=0$ or $r(n)=2$ among all square-free $n\equiv 5 \pmod{24}$ having all prime divisors congruent to $1$ modulo $4$ is at least $\frac{7}{16}$.
\end{remark}
\noindent When $n\equiv 11,13,17,23 \pmod{24}$, $s(n)$ is odd by Theorem \ref{Theorem 1}. So we have
$$s(n)\leq \frac{1}{3}(2^{s(n)}+1).$$
In a similar way as above, we deduce the following average bounds for $s(n)$.
\begin{corollary}\label{cor1}
Then for $h=13$ we have
$$\sum\limits_{n\in S(X,h)}s(n)\leq \frac{10}{3}\#\, S(X,h)+ O(X(\log X)^{-\frac{5}{8}}(\log \log X)^{8}).$$
\end{corollary}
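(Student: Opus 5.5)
Corollary~\ref{cor1} follows from Theorem~\ref{maintheorem} with $h=13$ and the parity result Theorem~\ref{Theorem 1}, in the same way as the $h=5$ corollary. The key step is the pointwise inequality
$$s\le \tfrac{1}{3}\left(2^{s}+1\right)\qquad\text{for every odd integer } s\ge 1.$$
To verify it, note that equality holds at $s=1$ (both sides equal $1$) and at $s=3$ (both sides equal $3$). For $s\ge 3$, passing from $s$ to $s+2$ increases the right side by $2^{s}\ge 8>2$ while the left side increases by only $2$. Induction over odd $s$ therefore gives the inequality for all odd $s\ge1$.

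By Theorem~\ref{Theorem 1}, every $n\equiv 13\pmod{24}$ has $s(n)$ odd. The inequality therefore applies to every $n\in S(X,13)$, and summing over $n\in S(X,13)$ gives
$$\sum_{n\in S(X,13)} s(n)\le \frac13\sum_{n\in S(X,13)}2^{s(n)}+\frac13\,\#\,S(X,13).$$
Substituting the asymptotic of Theorem~\ref{maintheorem} for the first sum, the right side becomes
$$\frac13\left(9\,\#\,S(X,13)+O\!\left(X(\log X)^{-\frac58}(\log\log X)^{8}\right)\right)+\frac13\,\#\,S(X,13)=\frac{10}{3}\,\#\,S(X,13)+O\!\left(X(\log X)^{-\frac58}(\log\log X)^{8}\right),$$
which is the claimed bound.

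There is no real obstacle once Theorem~\ref{maintheorem} and Theorem~\ref{Theorem 1} are granted. The only points needing care are the elementary inequality at the small values $s=1,3$, and the observation that the parity input is needed for every $n\in S(X,13)$, not just for almost all of them. The same argument gives the analogous bound for $r(n)$, since $r(n)\le s(n)$.
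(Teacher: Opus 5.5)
Your proof is correct and follows the paper's argument. Theorem~\ref{Theorem 1} makes $s(n)$ odd for $n\equiv 13\pmod{24}$; you apply the pointwise bound $s\le\frac13(2^{s}+1)$ for odd $s$ and sum using Theorem~\ref{maintheorem}, and your induction check of that bound (equality at $s=1,3$, right side growing by $2^{s}$ per step) supplies a detail the paper leaves implicit.
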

\noindent This also has an interesting consequence for the density of $s(n) =1$ or $3$.
\begin{corollary}\label{s(n)=1}
The density of $s(n)=1$ or $s(n)=3$ among of all square-free $n\equiv 13\pmod{24}$ having all prime divisors congruent to $1$ modulo $4$ is at least $\frac{1}{3}$.   
\end{corollary}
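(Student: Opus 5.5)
We argue exactly as in the proof of Corollary \ref{s(n)=0or2}, but now using the odd parity of $s(n)$ given by Theorem \ref{Theorem 1} for $n\equiv 13\pmod{24}$. Since every $n\in S(X,13)$ has $s(n)$ odd, we partition $S(X,13)$ into
$$S(1)=\{n: s(n)=1\},\quad S(3)=\{n: s(n)=3\},\quad S(>3)=\{n: s(n)\geq 5\}.$$
Theorem \ref{maintheorem} with $h=13$ then gives
$$2\,\#S(1)+8\,\#S(3)+32\,\#S(>3)\leq \sum_{n\in S(X,13)}2^{s(n)}=9\,\#S(X,13)+O\bigl(X(\log X)^{-\frac{5}{8}}(\log\log X)^{8}\bigr).$$
We use the exact asymptotic here rather than the weaker Corollary \ref{averagerank}, since it is $s(n)$ rather than $r(n)$ that we are studying.

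Next we divide by $\#S(X,13)$. For this step we need $\#S(X,13)\gg X(\log X)^{-1/2}$. This follows from the standard Landau--Selberg--Delange asymptotic for square-free integers composed of primes $\equiv 1\pmod 4$ lying in a fixed residue class modulo $24$; such a class is admissible, because $13\equiv 1\pmod 4$ and the primes $\equiv 1\pmod 4$ equidistribute among the classes $1,5,13,17\pmod{24}$. The error term $X(\log X)^{-5/8}(\log\log X)^8$ is therefore $o(\#S(X,13))$. Writing $d_1,d_3,d_{>}$ for the lower and upper limiting proportions (passing to a subsequence if necessary), we obtain
$$2d_1+8d_3+32d_{>}\leq 9,\qquad d_1+d_3+d_{>}=1.$$

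We then eliminate $d_{>}=1-d_1-d_3$, which gives $32-30d_1-24d_3\leq 9$, that is, $30d_1+24d_3\geq 23$. Since $30d_1+24d_3\leq 30(d_1+d_3)$, it follows that $d_1+d_3\geq 23/30$, and in particular $d_1+d_3\geq \frac13$. The crude bound $2^{s(n)}\geq 2$ on $S(1)\cup S(3)$ together with $2^{s(n)}\geq 32$ on $S(>3)$ already yields $d_{>}\leq 7/30$, so the stated $\frac13$ follows with room to spare. If we only want to match the authors' constant, it is enough to use $s(n)\leq \frac13(2^{s(n)}+1)$ and Corollary \ref{cor1}. That corollary gives an average of $s(n)$ at most $10/3$, and since $s(n)\geq 5$ on $S(>3)$, Markov's inequality then bounds $d_{>}\leq \frac{10/3-1}{4}$ after accounting for $s(n)\geq1$, which leads to a density of at least $\frac13$.

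The only step needing care is the lower bound $\#S(X,13)\gg X(\log X)^{-1/2}$, which is what makes the error term negligible. Everything else is elementary linear optimization in the limiting proportions, exactly as in Corollary \ref{s(n)=0or2}.
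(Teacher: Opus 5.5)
Your proof is correct. It takes a slightly different route from the paper's and gets a sharper constant.

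The paper does not apply Theorem~\ref{maintheorem} directly. It first uses $s(n)\leq \frac13\bigl(2^{s(n)}+1\bigr)$ for odd $s(n)$ to obtain the first-moment bound of Corollary~\ref{cor1}. It then runs the Markov-type argument with weights $1,3,5$ on $S(1),S(3),S(>3)$. You instead put the weights $2,8,32$ directly into $\sum 2^{s(n)}=9\,\#S(X,13)+o(\#S(X,13))$.

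The exponential weight penalizes large $s(n)$ much more than the linear one. So you get $d_1+d_3\geq 23/30$. The paper's route gives $4d_1+2d_3\geq 5/3$, i.e.\ $d_1+d_3\geq 5/12$, which the paper states only as $\frac13$; your last paragraph reconstructs exactly this bound. Both are the same linear-optimization idea. The paper's detour buys an average bound for $s(n)$ itself, but for the density statement your direct use of the theorem is strictly better.

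You also make explicit two points the paper skips. First, the lower bound $\#S(X,13)\gg X(\log X)^{-1/2}$, which is needed so that the error $X(\log X)^{-5/8}(\log\log X)^{8}$ is negligible after dividing. Your equidistribution justification over the subgroup $\{1,5,13,17\}\bmod 24$ is fine. Second, the proportions need not converge, whereas the paper writes $\lim$ as if they do. You do not need subsequences: for each $X$ the proportions satisfy $30(a_1+a_3)\geq 23-o(1)$, and taking $\liminf$ gives the statement for the lower density.

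Your remark that the analogous proof of Corollary~\ref{s(n)=0or2} should cite Theorem~\ref{maintheorem} rather than Corollary~\ref{averagerank} is also correct.
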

\begin{proof}
We define $S(1)=\left\{n\in S(X,h):s(n)=1\right\}$ , $S(3)=\left\{n\in S(X,h):s(n)=3\right\}$ and $S(>3)=\left\{n\in S(X,h):s(n)>3\right\}$. By Theorem \ref{Theorem 1}, we know that $s(n)$ is odd for  $n\equiv 13 \pmod{24}$. Now by Corollary \ref{cor1}, we have,
$$\#\:S(1)+3\:\#\:S(3)+5\:\#\:S(>3)\leq \frac{10}{3}\#\, S(X,h)+ O(X(\log X)^{-\frac{5}{8}}(\log \log X)^{8}).$$
Dividing by $\#\:S(X,h)$ in the both sides and letting $X\to\infty$ we get, 
$$\lim_{X\to\infty}\frac{\#\:S(1)}{\#\:S(X,h)}+\lim_{X\to\infty}3\frac{\#\:S(3)}{\#\:S(X,h)}+\lim_{X\to\infty}5\frac{\#\:S(>3)}{\#\:S(X,h)}\leq \frac{10}{3}.$$
On the other hand, we have,
$$\lim_{X\to\infty}\frac{\#\:S(1)}{\#\:S(X,h)}+\lim_{X\to\infty}\frac{\#\:S(3)}{\#\:S(X,h)}+\lim_{X\to\infty}\frac{\#\:S(>3)}{\#\:S(X,h)}=1.$$
From the above expressions, we therefore deduce that
$$\lim_{X\to\infty}\frac{\#\:S(1)}{\#\:S(X,h)}+\lim_{X\to\infty}\frac{\#\:S(3)}{\#\:S(X,h)}\geq \frac{1}{3}.$$
\end{proof}
\begin{remark}
We would like to emphasize that the positive density of $s(n)=1$ or $s(n)=3$ among of all square-free $n\equiv 13\pmod{24}$ having all prime divisors congruent to $1$ modulo $4$ is an unconditional result. 
\end{remark}

\begin{corollary}
Then for $h=13$ we have
$$\sum\limits_{n\in S(X,h)}r(n)\leq \frac{10}{3}\,\#\, S(X,h)+ O(X(\log X)^{-\frac{5}{8}}(\log \log X)^{8}).$$
\end{corollary}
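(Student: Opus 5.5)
This follows from the previous corollary together with $r(n)\le s(n)$, with one small repair. As the paper notes after Corollary \ref{averagerank}, the Mordell--Weil injection gives $r(n)\le s(n)$ for every $n$. The natural move is therefore to sum this inequality over $n\in S(X,13)$ and invoke Corollary \ref{cor1}:
$$\sum_{n\in S(X,13)} r(n)\;\le\;\sum_{n\in S(X,13)} s(n)\;\le\;\tfrac{10}{3}\,\#S(X,13)+O\bigl(X(\log X)^{-\frac58}(\log\log X)^{8}\bigr).$$

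The only point needing care is that the proof of Corollary \ref{cor1} uses $s(n)\le \frac13(2^{s(n)}+1)$ for odd $s(n)$, and this inequality fails at $s(n)=5$, where $5>11$ is false. Rather than rely on that route, I would recheck the pointwise inequality. For odd $s\ge 1$ the cleanest valid bound is $s\le \frac{1}{3}(2^{s}+1)$ for $s=1,3$ (equality at $s=1$ and $s=3$). For larger $s$ one has $2^{s}\ge 3s-1$, which holds for all integers $s\ge 1$: check $s=1,2,3$ directly and then induct, since $2^{s+1}\ge 2(3s-1)\ge 3s+2$ for $s\ge 2$. So the inequality $s\le\frac13(2^s+1)$ is in fact true for all $s\ge1$, and Corollary \ref{cor1} stands. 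Combining it with Theorem \ref{maintheorem} at $h=13$ gives $\sum s(n)\le \frac13(9+1)\#S(X,13)+O(\cdot)$.

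Alternatively, one can bypass $s(n)$ entirely. Apply the same pointwise bound to $r(n)$, using $r(n)\le \frac13(2^{r(n)}+1)$ (valid for $r\ge1$, and trivially for $r=0$). Then use Corollary \ref{averagerank}, $\sum 2^{r(n)}\le 9\#S(X,13)+O(\cdot)$. This yields exactly $\frac{10}{3}\#S(X,13)$ with the same error term and does not need parity at all.

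The only step with any content is verifying $m\le\frac13(2^m+1)$ for all integers $m\ge0$; everything else is summation of the earlier bounds. I would present the second route as the proof, because it is parity-free and uses only Corollary \ref{averagerank}.
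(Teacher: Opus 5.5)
Your first route, summing $r(n)\le s(n)$ over $S(X,13)$ and quoting Corollary~\ref{cor1}, is the intended argument and is correct. The route you say you would actually present is not correct. The inequality $m\le\frac13(2^m+1)$ is false at $m=2$, since $\frac13(2^2+1)=\frac53<2$. Correspondingly, your claim that $2^s\ge 3s-1$ for all $s\ge 1$ fails at the base case $s=2$ ($4<5$); your induction only gives it for $s=1$ and all $s\ge 3$. Your aside that the inequality fails at $s=5$ is also mistaken, since $5\le 11$.

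For $s(n)$ this does no harm. Theorem~\ref{Theorem 1} makes $s(n)$ odd for $n\equiv 13\pmod{24}$, and that parity input is exactly what Corollary~\ref{cor1} relies on. For $r(n)$, however, there is no unconditional parity statement. The congruence $r(n)\equiv s(n)\pmod 2$ would follow, for example, from finiteness of the $2$-primary part of the Tate--Shafarevich group, which is why the paper only \emph{assumes} it in its remarks. So you cannot exclude $r(n)=2$, and your ``parity-free'' deduction from Corollary~\ref{averagerank} breaks precisely there.

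You have two ways to fix this. You can present the first route, stating that the pointwise bound is only needed, and only true, for odd $s$. Alternatively, for a genuinely parity-free argument, use a correct chord inequality such as $m\le 2^{m-2}+1$ for all integers $m\ge 0$. To verify it, check $m=0,1,2,3$ (equality at $m=2,3$) and then induct from $m=3$. Combined with Corollary~\ref{averagerank} this gives $\sum_{n\in S(X,13)} r(n)\le \frac{9}{4}\,\#S(X,13)+\#S(X,13)+O\bigl(X(\log X)^{-5/8}(\log\log X)^{8}\bigr)=\frac{13}{4}\,\#S(X,13)+O\bigl(X(\log X)^{-5/8}(\log\log X)^{8}\bigr)$. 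This is slightly stronger than the stated $\frac{10}{3}$ and does not use Theorem~\ref{Theorem 1} at all.
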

\begin{remark} Assuming that $r(n)$ and $s(n)$ always have the same parity. We can deduce that the density of $r(n)=1$ or $r(n)=3$ among all square-free $n\equiv 13\pmod{24}$ having all prime divisors congruent to $1$ modulo $4$ is at least $\frac{1}{3}$. This implies that the density of $\frac{\pi}{3}$-congruent number among all square-free $n\equiv 13\pmod{24}$ having all prime divisors congruent to $1$ modulo $4$ is at least $\frac{1}{3}$.
\end{remark}

\bibliographystyle{alpha}

\end{document}